\NeedsTeXFormat{LaTeX2e}
[1994/12/01]
\documentclass{ijmart}

\usepackage{amsmath,amssymb,amsfonts,amssymb,amsthm}
\usepackage{rotating, verbatim,hyperref,url,tikz,tikz-qtree,ifthen,cancel,array,graphicx,adjustbox,appendix,pifont,enumitem,vruler}
\usetikzlibrary{positioning}
\usepackage{mathrsfs}

\chardef\bslash=`\\ 





\hfuzz1pc 





\newtheorem{thm}{Theorem}[section]
\newtheorem{prop}[thm]{Proposition}
\newtheorem{lem}[thm]{Lemma}
\newtheorem{cor}[thm]{Corollary}

\newtheorem*{thm.indivisibility}{Theorem \ref{thm.indivisibility}}

\newtheorem{fact}[thm]{Fact}

\newtheorem*{CharBRDSDAP}{Simple Characterization of  big Ramsey degrees}

\theoremstyle{remark}
\newtheorem{rem}[thm]{Remark}

\theoremstyle{definition}
\newtheorem{defn}[thm]{Definition}
\newtheorem{convention}[thm]{Convention}

\newtheorem{notation}[thm]{Notation}

\newtheorem{example}[thm]{Example}
\newtheorem{question}[thm]{Question}
\newtheorem{problem}[thm]{Problem}

\theoremstyle{remark}

\newtheoremstyle{refnocolon}%
{}
{}
{}
{}
{\itshape}
{}
{ }
{}
\theoremstyle{refnocolon}
\newtheorem*{conv}{Convention}




\newcommand{\om}{\omega}

\newcommand{\sse}{\subseteq}
\newcommand{\contains}{\supseteq}

\newcommand{\bC}{\mathbb{C}}
\newcommand{\bD}{\mathbb{D}}

\newcommand{\bJ}{\mathbf{J}}
\newcommand{\bK}{\mathbf{K}}

\newcommand{\bM}{\mathbf{M}}

\newcommand{\bQ}{\mathbb{Q}}

\newcommand{\bT}{\mathbb{T}}
\newcommand{\bS}{\mathbb{S}}

\newcommand{\bU}{\mathbb{U}}

\newcommand{\M}{\mathrm{M}}
\newcommand{\N}{\mathrm{N}}
\newcommand{\re}{\!\restriction\!}
\newcommand{\bA}{\mathbf{A}}
\newcommand{\bB}{\mathbf{B}}
\newcommand{\bF}{\mathbf{F}}
\newcommand{\bG}{\mathbf{G}}

\newcommand{\bfA}{\mathbf{A}}
\newcommand{\bfB}{\mathbf{B}}
\newcommand{\bfC}{\mathbf{C}}
\newcommand{\bfD}{\mathbf{D}}
\newcommand{\bfE}{\mathbf{E}}
\newcommand{\bfM}{\mathbf{M}}
\newcommand{\bfN}{\mathbf{N}}
\newcommand{\bfO}{\mathbf{O}}

\newcommand{\wsim}{\stackrel{w}{\sim}}

\newcommand{\plussim}{\stackrel{+}{\sim}}
\newcommand{\Lplussim}{\stackrel{\mathrm{L}+}{\sim}}
\newcommand{\Lsim}{\stackrel{\mathrm{L}}{\sim}}
\newcommand{\Lwsim}{\stackrel{\mathrm{Lw}}{\sim}}

\newcommand{\ra}{\rightarrow}

\newcommand{\Lra}{\Longrightarrow}

\newcommand{\lgl}{\langle}
\newcommand{\rgl}{\rangle}

\newcommand{\rl}{\!\downarrow\!}

\newcommand{\Fraisse}{Fra{\"{i}}ss{\'{e}}}
\newcommand{\Hubicka}{Hubi{\v{c}}ka}
\newcommand{\Lauchli}{L{\"{a}}uchli}

\newcommand{\Nesetril}{Ne{\v{s}}et{\v{r}}il}

\newcommand{\Rodl}{R{\"{o}}dl}
\newcommand{\cmark}{\ding{51}}%
\newcommand{\xmark}{\ding{55}}%
\newcommand{\EEAP}{SDAP}
\newcommand{\EEAPnonacronym}{Substructure  Disjoint Amalgamation Property}
\newcommand{\SFAP}{SFAP}
\newcommand{\SFAPnonacronym}{Substructure  Free  Amalgamation Property}
\newcommand{\BEEAP}{BSDAP}

\newcommand{\TwoEEAP}{$2$-SDAP}

\newcommand{\noprint}[1]{\relax}


\DeclareMathOperator{\Sim}{Sim}
\DeclareMathOperator{\Ext}{Ext}

\DeclareMathOperator{\type}{tp}

\DeclareMathOperator{\cl}{cl}

\DeclareMathOperator{\Emb}{Emb}
\DeclareMathOperator{\Forb}{Forb}


\newcommand{\eval}[2][\right]{\relax%
  \ifx#1\right\relax \left.\fi#2#1\rvert}


%



\begin{document}
\title[Simply characterized big Ramsey structures]{Fra{\"{i}}ss{\'{e}} structures with SDAP$^+$, Part II:\newline Simply characterized big Ramsey structures}

\author{R. Coulson}
\address{United State Military Academy, West Point\\
Department of Mathematical Sciences, Thayer Hall 233, West Point, USA}
\email{rebecca.coulson@westpoint.edu}
\urladdr{\url{https://sites.google.com/view/rebeccacoulson/home}}

\author{N. Dobrinen}
\address{University of Denver\\%
Department of Mathematics, 2390 S. York St., Denver, CO USA}
\email{natasha.dobrinen@du.edu}
  \urladdr{\url{http://math.du.edu/~ndobrine}}

  \author{R. Patel}
  \address{%
 African Institute for Mathematical Sciences\\%
    M'bour-Thi\`{e}s, Senegal}
\email{rpatel@aims-senegal.org}

\thanks{The second author is grateful for support from   National Science Foundation Grant DMS-1901753, which also supported research visits to the University of Denver by the first and third authors.
She also is grateful for support
from Menachem Magidor for hosting her visit to
 The Hebrew University of Jerusalem  in December 2019, during which some of the ideas in this paper were formed.%
The third author's work on this paper was supported by the National Science Foundation under Grant No. DMS-1928930 while she was in residence at the Mathematical Sciences Research Institute in Berkeley, California, during the Fall 2020 semester.
}




\begin{abstract}
This is Part II of a two-part series regarding Ramsey properties of \Fraisse\ structures satisfying 
a property called SDAP$^+$, which strengthens the Disjoint  Amalgamation Property.
In Part I, we prove that every \Fraisse\ structure
in a finite relational language with relation symbols of any finite arity satisfying this property is indivisible.
In Part II, we prove 
 that
 every \Fraisse\  structure
 in a finite relational language with relation symbols of arity at most two having
this  property
has finite big Ramsey degrees which have a  simple characterization.
It follows that
any such \Fraisse\ structure
admits
 a big  Ramsey structure.
 Part II utilizes the notion of coding trees of $1$-types developed in Part I and a theorem from Part I which functions as a pigeonhole principle for induction arguments in this paper. 
Our approach yields
a direct
characterization of  the  degrees  without appeal to the standard method of ``envelopes''.
 This work offers a
 streamlined  and unifying approach to  Ramsey theory on some seemingly disparate classes of \Fraisse\ structures.
\end{abstract}
\maketitle
\tableofcontents

\section{Introduction}

\label{sec.intro}

This is Part II of a two-part series 
on a  property called SDAP$^+$ and its applications in the Ramsey theory of \Fraisse\ structures.
An overview of the area and motivations 
are 
provided in Section 1 of Part I, \cite{CDPI}.  
Here, we concentrate on big Ramsey degrees, building on 
 work developed in Part I.

The field of  big Ramsey degrees seeks to answer the question of which infinite structures carry analogues of the infinite Ramsey Theorem.

\begin{thm}[Ramsey, \cite{Ramsey30}]\label{thm.RamseyInfinite}
Given integers $k,r\ge 1$ and a coloring
of the $k$-element subsets of the natural numbers into $r$ colors,
there is an infinite set of natural numbers, $N$, such that all $k$-element subsets of $N$ have the same color.
\end{thm}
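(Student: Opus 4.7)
The plan is to proceed by induction on the arity $k$, using the pigeonhole principle as the base case and a diagonal thinning construction for the inductive step. For $k=1$, a coloring of $\bN$ into $r$ colors must, by the infinite pigeonhole principle, place infinitely many integers in some single color class, and that color class is the desired $N$.

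For the inductive step, I would assume the theorem for $k-1$ and consider a coloring $c$ of the $k$-element subsets of $\bN$ into $r$ colors. The plan is to build, by recursion on $i$, a strictly increasing sequence $n_0 < n_1 < n_2 < \cdots$ of natural numbers together with a decreasing sequence of infinite sets $M_0 \supseteq M_1 \supseteq M_2 \supseteq \cdots$ such that $n_i \in M_i$, every element of $M_{i+1}$ exceeds $n_i$, and the induced coloring
\[
c_i(F) = c(\{n_i\} \cup F)
\]
on the $(k-1)$-element subsets $F$ of $M_{i+1}$ is constant, say of color $\chi(i) \in \{1,\dots,r\}$. At stage $i$ the hypothesis is available to produce $M_{i+1} \sse M_i$ witnessing monochromaticity for $c_i$, and $n_{i+1}$ is chosen as the least element of $M_{i+1}$ strictly above $n_i$.

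Once the sequence is constructed, a second application of pigeonhole to the map $\chi : \bN \ra \{1,\dots,r\}$ yields an infinite $I \sse \bN$ on which $\chi$ is constant, with some common value $j$. I claim that $N = \{ n_i : i \in I \}$ is as required: given any $k$-element subset $\{n_{i_1}, n_{i_2}, \dots, n_{i_k}\}$ of $N$ with $i_1 < \cdots < i_k$, the remaining points $\{n_{i_2}, \dots, n_{i_k}\}$ lie in $M_{i_1+1}$ by the nesting property, so $c(\{n_{i_1},\dots,n_{i_k}\}) = c_{i_1}(\{n_{i_2},\dots,n_{i_k}\}) = \chi(i_1) = j$.

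The main technical point to get right is the bookkeeping that guarantees $M_{i+1} \sse M_i$ and $n_{i+1} > n_i$ simultaneously, so that the nesting $\{n_{i_2},\dots,n_{i_k}\} \sse M_{i_1+1}$ holds for arbitrary indices $i_1 < \cdots < i_k$ in $I$; this is what makes the single color $\chi(i_1)$ control the color of the full $k$-subset. Everything else is a direct application of the inductive hypothesis and of infinite pigeonhole, so no additional machinery from the paper is needed.
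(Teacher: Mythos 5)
The paper does not prove this statement; it simply cites Ramsey \cite{Ramsey30} as background. Your argument is the standard proof of the infinite Ramsey theorem (essentially Ramsey's original diagonal construction): induct on the arity $k$, with the base case handled by infinite pigeonhole, and at arity $k$ build a decreasing chain of infinite sets $M_0\supseteq M_1\supseteq\cdots$ and elements $n_i\in M_i$ so that each $n_i$ has a uniform ``pre-color'' $\chi(i)$ over $(k-1)$-subsets of $M_{i+1}$, then stabilize $\chi$ by a second application of pigeonhole. The argument is correct; in particular, the nesting you set up does guarantee that for $i_1<\cdots<i_k$ in $I$ one has $\{n_{i_2},\dots,n_{i_k}\}\subseteq M_{i_1+1}$ (since $n_{i_m}\in M_{i_m}\subseteq M_{i_1+1}$ for $m\ge2$), which is exactly what is needed for the final computation $c(\{n_{i_1},\dots,n_{i_k}\})=\chi(i_1)=j$. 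No gaps.
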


For infinite structures, exact analogues of  Ramsey's theorem usually fail for colorings of finite structures of size two or more, even when the class of finite substructures has the Ramsey property.
This is due to some unseen structure  which persists in  every
infinite substructure isomorphic to the original, but  which dissolves
when considering Ramsey properties of
classes of finite substructures.
The quest to characterize  this  often hidden but essential  structure is the area of
 {\em big Ramsey degrees}.

Given an infinite structure
$\bfM$, we say that
 $\bfM$ has {\em finite big Ramsey degrees} if for each finite substructure
$\bfA$ of $\bfM$, there is an integer $T$
such that the following  holds:
For any coloring of
the copies of $\bfA$ in $\bfM$
into finitely many colors, there is a
 substructure $\bfM'$ of $\bfM$
such that $\bfM'$ is isomorphic to $\bfM$, and
 the copies of $\bfA$ in $\bfM'$ take no more than $T$ colors.
When a $T$ having this property exists, the least  such value
is called the {\em big Ramsey degree} of $\bfA$ in
$\bfM$,
denoted
$T(\bfA, \bM)$.
In particular, if
the big Ramsey degree of $\bfA$ in $\bfM$ is one,
then any finite coloring of the copies of
$\bfA$ in $\bfM$ is
constant
on some subcopy of $\bfM$.

Big Ramsey degrees on infinite structures trace back to Sierpi\'{n}ski's result in the 1930's that
the big Ramsey degree for unordered pairs of rationals is at least two \cite{Sierpinski}.
For several decades, progress has been slow and sporadic.
However, big Ramsey degrees have received renewed focus due to
the flurry of
results in
 \cite{Laflamme/NVT/Sauer10},
\cite{Laflamme/Sauer/Vuksanovic06}, \cite{NVT08},
and \cite{Sauer06}
in tandem with
the publication
of
 \cite{Kechris/Pestov/Todorcevic05}, in which
 Kechris, Pestov, and Todorcevic
asked for an analogue of their
correspondence between
 the Ramsey property  of \Fraisse\ classes and  extreme amenability to the setting of
 big Ramsey degrees for \Fraisse\ limits.
 This was
 addressed
 by Zucker in \cite{Zucker19},
 where he proved a
 connection
between
 \Fraisse\
 limits
 with finite big Ramsey degrees and completion flows in topological dynamics.
Zucker's results apply to {\em big Ramsey structures},
expansions of  \Fraisse\
limits
in which the big Ramsey degrees of the \Fraisse\
limits
can be exactly characterized using the additional structure induced by the expanded language.
This additional structure
involves a well-ordering, and
characterizes
the  essential structure which persists  in every infinite subcopy of the \Fraisse\
limit.
It is this essential structure  we seek to understand
 in the study of big Ramsey degrees.

In Part I, 
we  described
an amalgamation property,
called the \EEAPnonacronym\ (\EEAP),
forming a strengthened version of  disjoint  amalgamation.
The \Fraisse\ limit of a \Fraisse\ class satisfying \EEAP\ is said to satisfy \EEAP$^+$ if it satisfies two additional properties,
 which  we call
the Diagonal Coding Tree Property and the Extension Property.
A related  property, called the Labeled Substructure Disjoint Amalgamation Property$^+$  (LSDAP$^+$), was also introduced in Part I. 
We will recall the main definitions in Section \ref{subsec.Fcrs},
referring the reader to Part I for the full exposition.

A \Fraisse\ limit is  called
{\em indivisible} if  every one-element substructure of $\bK$ has big Ramsey degree equal to one.
In Part I, we proved indivisibility for  all 
\Fraisse\ limits in finite relational languages with relation symbols of any finite arity satisfying \EEAP$^+$.

\begin{thm}\label{thm.indivisibility}
Suppose  $\mathcal{K}$ is a  \Fraisse\ class
in a finite relational language
with relation symbols  in any arity such that
its  \Fraisse\ limit $\bK$  satisfies
\EEAP$^+$.
Then $\bK$ is indivisible.
\end{thm}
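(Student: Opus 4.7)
The plan is to interpret $\bK$ through its coding tree of $1$-types and then apply a tree-level partition argument to the induced coloring of coding nodes. First, invoking the Diagonal Coding Tree Property guaranteed by \EEAP$^+$, I would fix a diagonal coding tree $\bT$ whose coding nodes bijectively enumerate the vertices of $\bK$. By design, the passing numbers and the branching structure at each level of $\bT$ record the complete $1$-type of each coding node over the previously listed vertices, so $\bT$ faithfully represents $\bK$ as a first-order structure.

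Any finite coloring $\gamma:\bK\to r$ of the vertices then pulls back to a finite coloring of the coding nodes of $\bT$. The substantive step is to produce a diagonal coding subtree $\bT^*\sse\bT$ on which every coding node receives the same $\gamma$-color. Because the objects being colored here have size one, no amalgamation of several colored configurations is needed; a single fusion argument that alternates between absorbing the extension requirements of \EEAP\ and applying a \Halpern--\Lauchli\ style pigeonhole for colorings of coding nodes at each level suffices. This fusion is precisely the type of argument isolated in Part I as the pigeonhole principle driving the induction used throughout the present paper.

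The remaining step is to certify that the substructure of $\bK$ coded by the thinned tree $\bT^*$ is isomorphic to $\bK$. Here the Extension Property of \EEAP$^+$ is decisive: it guarantees that any diagonal coding tree in which the prescribed $1$-types are realized sufficiently often codes a copy of the \Fraisse\ limit. Because the fusion producing $\bT^*$ is arranged at every stage to preserve these extension requirements, the coded substructure $\bK'\sse\bK$ satisfies $\bK'\cong\bK$. All vertices of $\bK'$ carry a single $\gamma$-color by construction, so the big Ramsey degree of a singleton is $1$ and $\bK$ is indivisible.

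The main obstacle I expect is engineering the fusion that constructs $\bT^*$: one must simultaneously thin the tree enough to land in a single color class, and retain enough splitting and realization of $1$-types to keep the Extension Property intact, since otherwise the resulting tree would code only a proper substructure of $\bK$ rather than an isomorphic copy. The tension between thinning (for color) and richness (for isomorphism type) is the heart of the argument, and \EEAP$^+$ is essentially designed so that this tension can be resolved at every level of the fusion. The rest of the proof is bookkeeping around this core construction.
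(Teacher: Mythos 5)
Your approach is in line with the one used in Part I (where this theorem is actually proved): fix a diagonal coding tree, pull back the vertex coloring to the coding nodes, thin via the Part I pigeonhole (the Level Set Ramsey Theorem), and read off a monochromatic copy of $\bK$ from the coding nodes of the resulting tree $\bT^*$. Two points need correction, though. First, when the language has unary relations, so that more than one singleton $1$-type occurs densely among the coding nodes of $\bT$, you cannot make \emph{every} coding node of $\bT^*$ receive the same $\gamma$-color: a coloring by unary $1$-type already refutes this. What the pigeonhole delivers, and what indivisibility (big Ramsey degree one for each one-element substructure) actually requires, is that the coding nodes of each fixed singleton $1$-type become monochromatic. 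Second, the certification that $\bK \re \bT^* \cong \bK$ is not a consequence of the Extension Property. It is built into membership in the coding-tree space $\mathcal{T}$: each application of the pigeonhole returns a tree in $[D,T]$, hence in $\mathcal{T}$, and by definition any such tree represents a copy of $\bK$. The Extension Property plays its role \emph{inside} the pigeonhole step --- it is what guarantees that partial $+$-similarity copies of the relevant configurations can always be extended within whichever subtree the thinning has committed to --- so the ``tension'' you correctly identify between thinning and richness is resolved there, not at the end when you certify the isomorphism type.
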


In this paper, 
we characterize the exact big Ramsey degrees for all \Fraisse\  limits in finite relational languages
with relation symbols of arity at most two
satisfying \EEAP$^+$ or LSDAP$^+$.
 Our characterization, together with results of
Zucker in \cite{Zucker19}, imply that such  \Fraisse\ limits further admit big Ramsey structures, and their automorphism groups have  metrizable universal completion flows.

\begin{thm}\label{thm.main}
Let $\mathcal{K}$ be a  \Fraisse\  class
in a finite relational language
with relation symbols of arity at most two
such that the \Fraisse\ limit  $\bK$ of $\mathcal{K}$ 
has \EEAP$^+$ or LSDAP$^+$.
Then
$\bK$
has finite big Ramsey degrees which have a simple characterization and, moreover,
admits  a big Ramsey structure.
Hence, the topological group
$\rm{Aut}(\bK))$
 has a metrizable universal completion flow, which is unique up to isomorphism.
\end{thm}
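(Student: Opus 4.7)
The plan is to prove Theorem \ref{thm.main} by first developing an analogue of Milliken's tree theorem in the setting of \emph{diagonal coding trees of $1$-types} (the objects introduced in Part~I), then using this to compute the exact big Ramsey degrees, and finally invoking Zucker's machinery \cite{Zucker19} for the topological-dynamical corollary. Throughout, the restriction that relation symbols have arity at most two is crucial: the ``type'' of how a new vertex attaches to previously listed vertices is then coded by a single node on each level of the coding tree, so the combinatorial information about a finite substructure of $\bK$ is fully captured by an antichain in the coding tree together with the tree-theoretic data (passing numbers and relative positions). This is what will ultimately allow the characterization to be \emph{simple}, i.e.\ to avoid envelopes.

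First I would fix a diagonal coding tree $\bT$ of $1$-types for $\bK$ provided by the Diagonal Coding Tree Property, and develop for each finite substructure $\bfA\subseteq\bK$ its \emph{similarity type} relative to such a tree: the finite tree with coding nodes recording, for each pair of elements of $\bfA$, the meet, the passing type, and the order of splitting. Two copies of $\bfA$ in $\bK$ will be shown to have the same similarity type iff any coloring that respects the tree structure gives them the same color. The next step is to prove Theorem~\ref{thm.MillikenIPOC}, a Milliken-style theorem: for any finite coloring of the set of strong diagonal subtrees of $\bT$ of a fixed finite isomorphism type, there is a strong diagonal subtree $\bT'\subseteq\bT$ isomorphic to $\bT$ on which the coloring is constant. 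This is carried out by a fusion/induction along the levels of $\bT$, using the pigeonhole principle from Part~I at each step to choose an extension that monochromatizes all finite configurations of the given similarity type ending at the current level. This is the main obstacle: setting up the fusion so that finitely many similarity types are handled simultaneously, and verifying that the diagonal structure is preserved throughout the construction.

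Once the Milliken-style theorem is in hand, the upper bound on $T(\bfA,\bK)$ is immediate: given a coloring of the copies of $\bfA$ in $\bK$, transfer it to a coloring of similarity types of subtrees coding $\bfA$, apply Theorem~\ref{thm.MillikenIPOC} finitely many times (once per similarity type), and read off a subcopy of $\bK$ inside $\bT'$ using the Extension Property to realize every $1$-type encountered. The number of similarity types of subtrees of $\bT$ coding $\bfA$ is finite and explicitly computable from $\bfA$, giving the simple characterization. For the matching lower bound, I would construct, for each pair of distinct similarity types, an explicit finite coloring---built from the passing numbers and the level/splitting data of the coding nodes---that cannot be reduced within any isomorphic subcopy of $\bK$; this uses the disjoint amalgamation embedded in SDAP$^+$ (respectively LSDAP$^+$) to produce all similarity types inside every subcopy. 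The case of LSDAP$^+$ parallels the SDAP$^+$ case, with the labeled substructure data incorporated into the similarity type.

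Finally, to obtain the big Ramsey structure, I would expand $\bK$ by the natural linear order inherited from the enumeration of its elements as coding nodes of $\bT$, together with the finitely many unary predicates marking the similarity-type data of each initial segment. A standard argument shows this expansion is a big Ramsey structure in the sense of Zucker, since the big Ramsey degree of $\bfA$ in $\bK$ equals the number of isomorphism types of expansions of $\bfA$ realized in the expansion. Zucker's theorem (Theorem~7.1 of \cite{Zucker19}) then yields that $\mathrm{Aut}(\bK)$ has a metrizable universal completion flow, unique up to isomorphism. \qed
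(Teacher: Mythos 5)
The upper-bound half of your proposal tracks the paper closely: your ``Milliken-style theorem'' for colorings of diagonal configurations of a fixed similarity type is essentially Theorem~\ref{thm.onecolorpertreetype}, proved by a fusion along the levels of $\bT$ with the Level Set Ramsey Theorem (the pigeonhole from Part~I) supplying each step, and Lemma~\ref{lem.bD} supplies the subcopy of $\bK$. That part is fine. But there are two genuine gaps.

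First, your lower bound is asserted rather than proved. You say you would ``construct, for each pair of distinct similarity types, an explicit finite coloring\dots that cannot be reduced within any isomorphic subcopy.'' The coloring is not the issue: the obvious one sends each copy of $\bfA$ to the similarity type of the antichain coding it. What must be proved is that \emph{every} similarity type is realized inside \emph{every} subcopy of $\bK$ --- that is, a persistence theorem (Theorem~\ref{thm.persistence}). The paper proves this by building a passing-type-preserving map $\varphi$ from the full set of coding nodes onto the antichain coding the subcopy, extending it level-wise via the notion of ``large'' sets (Lemmas~\ref{lem.lemma1} and~\ref{lem.lemma2}), and constructing a similarity copy of the given antichain inside the meet-closure. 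This is the place the disjoint amalgamation of \EEAP\ actually earns its keep, and it cannot be replaced by merely observing that the coloring exists; without it you only get the upper bound.

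Second, and more decisively, your proposed big Ramsey structure will not work. You expand $\bK$ by ``the natural linear order inherited from the enumeration of its elements'' and ``finitely many unary predicates marking the similarity-type data of each initial segment.'' The similarity type of a diagonal antichain $\{c_{a_0},\dots,c_{a_{n-1}}\}$ is \emph{not} determined by unary information on the $a_i$ together with a linear order: it depends on how the lengths of the pairwise meets $|c_{a_i}\wedge c_{a_j}|$ compare to one another, which is intrinsically a relation on \emph{pairs of pairs}. The paper's big Ramsey structure $\bK^*$ accordingly uses $\mathcal{L}^*=\mathcal{L}\cup\{\triangleleft,\mathscr{Q}\}$ where $\triangleleft$ is the \emph{lexicographic} order $\prec$ on the antichain $\bD$ (of order type $\bQ$, not $\om$), and $\mathscr{Q}$ is a \emph{quaternary} symbol interpreted by $R(p,q,r,s)\Longleftrightarrow |p\wedge q|\le|r\wedge s|$. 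Your expansion loses exactly the data $\mathscr{Q}$ carries, and as a result cannot satisfy item (3) of Definition~\ref{defn.bRs} (the count $T(\bfA,\bK)\cdot|\mathrm{Aut}(\bfA)|=|\bK^*(\bfA)|$ would be wrong). Relatedly, the passage from canonical partitions of \emph{copies} to Zucker's criterion for \emph{embeddings} is not a ``standard argument'' to wave at: the paper devotes Theorem~\ref{thm.apply} to constructing the unavoidable colorings $\gamma_m$ and verifying the strong-refinement chain $\gamma_m\ll\gamma_n$, and that argument uses the quaternary data through the similarity types of diagonal antichains.
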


Theorem \ref{thm.main} provides new classes of examples
 of big Ramsey structures while
 recovering results in
 \cite{DevlinThesis},
\cite{HoweThesis},
 \cite{Laflamme/NVT/Sauer10}, and
 \cite{Laflamme/Sauer/Vuksanovic06}
and extending special cases of  results in \cite{Zucker20} to obtain exact big Ramsey degrees.
It will also follow from Theorem \ref{thm.main} that \Fraisse\ limits satisfying LSDAP$^+$ are indivisible.
Theorem  5.4  in \cite{CDPI} (the Level Set Ramsey Theorem  from Part I) will serve as the starting point for the proof of
Theorem \ref{thm.main}.

We now discuss several previous
 theorems which are recovered by  Theorem   \ref{thm.main},
 as well as  new examples
 obtained  from our results.
In Proposition \ref{prop.LSVSFAP} we will  show 
 that \EEAP$^+$ holds for  disjoint
 amalgamation classes which are ``unrestricted''
(see Definition \ref{defn.unconst}), as well as their ordered versions.
Examples 
of unrestricted structures
include
 classes of
  structures with finitely many unary and binary relations such as
graphs, directed graphs,
tournaments, graphs with finitely many edge relations, etc.
In particular, Theorem \ref{thm.main}  recovers 
the work of Laflamme, Sauer, and Vuksanovic in  \cite{Laflamme/Sauer/Vuksanovic06}, which characterized the big Ramsey degrees of the unrestricted \Fraisse\ classes with finitely many binary relations, and provides new results for their ordered versions. 
Theorem \ref{thm.main} also applies to  $k$-partite graphs as well as their ordered versions, as these structures also satisfy SDAP$^+$.
These big Ramsey degree results are presented in Theorem \ref{thm.supercool}.

The existence of upper bounds for $k$-partite graphs follows from a more general result obtained by Zucker in \cite{Zucker20}, where he found upper bounds for the big Ramsey degrees for \Fraisse\ classes with relations of arity at most two satisfying free amalgamation. 
After the  announcement of our results in Parts I and  II in the 2020 version   \cite{CDP20}, our result on the exact big Ramsey degrees for $k$-partite graphs has  been recovered in the 2021 work of Balko, Chodounsk\'{y}, Dobrinen, Hubi\v{c}ka, Kone\v{c}n\'{y}, Vena, and Zucker in \cite{Balko7}, which yields exact big Ramsey degrees for \Fraisse\ classes with relations of arity at most two satisfying free amalgamation. 
We give here a succinct characterization of the big Ramsey degrees for k-partite graphs.

In Proposition \ref{prop.FA}
we will  show that \EEAP$^+$ holds for \Fraisse\
limits
of  free amalgamation classes which  forbid $3$-irreducible substructures, namely, substructures in which any three distinct elements appear  in a tuple of which some relation holds, as well as their ordered versions.
This provides a large class of indivisible \Fraisse\ limits, by Theorem \ref{thm.indivisibility} in Part I.

Certain
\Fraisse\ structures
derived from the rational linear order
 have enough rigidity, similarly to $\bQ$,  for either
\EEAP$^+$ or  LSDAP$^+$ to hold, hence producing big Ramsey structures with simple characterizations.
 These results are consolidated in Theorem \ref{thm.LOEqRels}.

Theorem \ref{thm.LOEqRels} 
 shows
that  the structure
$\bQ_{\bQ}$ admits a big Ramsey structure,
answering  a question raised by Zucker at the 2018 Banff Workshop on {\em Unifying Themes in Ramsey Theory}.
This structure $\bQ_{\bQ}$ is the dense linear order without endpoints with an equivalence relation such that  all equivalence classes are convex copies of the rationals.
More generally,  Theorem \ref{thm.LOEqRels} 
 applies to
members of
a natural hierarchy  of infinite structures with  finitely many convexly ordered equivalence relations, where each  successive  equivalence relation coarsens the previous one;
these also admit big Ramsey structures with simple characterizations.

Known results  which
Theorem \ref{thm.LOEqRels} 
recovers include
Devlin's  characterization of  the  big Ramsey degrees of the rationals \cite{DevlinThesis};
 results
 of Laflamme, Nguyen Van Th\'{e}, and Sauer  in
 \cite{Laflamme/NVT/Sauer10}
 characterizing the big Ramsey degrees of
 the $\bQ_n$;
and  a result of Zucker in \cite{Zucker19}  showing that 
$\bQ_n$, the rational
linear order with a partition into $n$ dense
pieces,
admits a big Ramsey structure with a simple characterization.

While many of the known big Ramsey degree results use sophisticated versions of Milliken's Ramsey theorem for trees \cite{Milliken79},
and while proofs using the method of forcing to produce new pigeonhole principles in ZFC have appeared in \cite{DobrinenRado19}, \cite{DobrinenH_k19}, \cite{DobrinenJML20}, and  \cite{Zucker20},
 our approach produces  a clarity about  big Ramsey degrees for structures satisfying SDAP$^+$ or LSDAP$^+$.
Given a \Fraisse\ class $\mathcal{K}$,
we fix an  enumerated \Fraisse\ limit of $\mathcal{K}$, which we denote by $\bK$.
By {\em enumerated \Fraisse\ limit}, we mean that the universe of $\bK$ is ordered via the natural numbers. 
By working 
with  trees of  quantifier-free $1$-types 
and the Level Set Ramsey Theorem   from Part I,
we will find the exact big Ramsey degrees directly from the  diagonal coding trees of $1$-types, without appeal to the standard method of ``envelopes''.
This means that
the upper bounds which we  find via  induction starting with the Level Set Ramsey Theorem 
 are shown to be exact.

Using trees
of
quantifier-free
$1$-types
(partially ordered by inclusion)
allows us to prove   a  characterization of  big Ramsey degrees for \Fraisse\ classes with \EEAP$^+$ or LSDAP$^+$ which is a simple extension of the so-called ``Devlin types'' for the rationals in \cite{DevlinThesis},
and of the  characterization of the big Ramsey degrees of the  Rado graph achieved by Laflamme, Sauer, and Vuksanovic in \cite{Laflamme/Sauer/Vuksanovic06}.
Here, we present  the characterization for  structures  without unary relations.
The full  characterization is given in Theorem \ref{thm.bounds}.

\begin{CharBRDSDAP}
Let $\mathcal{L}$   be a  language
consisting of
finitely many  relation symbols, each of arity
two.
Suppose $\mathcal{K}$ is a \Fraisse\ class in $\mathcal{L}$
such that the \Fraisse\ limit
$\bK$
of $\mathcal{K}$ satisfies
\EEAP$^+$ or LSDAP$^+$.
Fix a structure  $\bfA\in\mathcal{K}$.
 Let
 $(\bfA,<)$ denote
  $\bfA$ together with a fixed enumeration
  $\lgl\mathrm{a}_i:i<n\rgl$
 of the universe
 of $\bfA$.
We say that a tree $T$ is a
{\em diagonal tree coding $(\bfA,<)$}
if  the following hold:
\begin{enumerate}
\item
$T$ is a finite tree with $n$ terminal nodes and
 branching degree two.
\item
$T$ has  at most one branching node  in
any given
level,
and
no two distinct nodes from among the branching nodes and terminal nodes have the same length.
Hence, $T$ has $2n-1$ many levels.
\item
Let $\lgl \mathrm{d}_i:i<n\rgl$ enumerate the terminal nodes in $T$ in order of increasing length.
Let $\bfD$ be the $\mathcal{L}$-structure induced on the set $\{\mathrm{d}_i:i<n\}$ by the increasing
bijection from   $\lgl\mathrm{a}_i:i<n\rgl$ to $\lgl \mathrm{d}_i:i<n\rgl$, so that $\bfD \cong \bfA$.
Let $\tau_i$ denote the quantifier-free
$1$-type of $\mathrm{d}_i$ over
$\bfD_i$,
the substructure of
$\bfD$ on vertices $\{\mathrm{d}_m:m<i\}$.
Given  $i<j<k<n$,
if $\mathrm{d}_j$ and $\mathrm{d}_k$
both
extend
some node
in $T$
that is at the same level as $\mathrm{d}_i$,
then $\mathrm{d}_j$ and $\mathrm{d}_k$
 have the same quantifier-free $1$-types over
$\mathbf{D}_i$.
That is,
$\tau_j\re \mathbf{D}_i
=\tau_k\re \mathbf{D}_i$.
\end{enumerate}
Let $\mathcal{D}(\bfA,<)$ denote the number of distinct diagonal trees coding $(\bfA,<)$;
let $\mathcal{OA}$ denote a set
consisting of
one representative from each isomorphism class of ordered copies of $\bfA$.
Then
 $$
 T(\bfA,\bK)
=
\sum_{(\bfA,<)\in\mathcal{OA}}\mathcal{D}(\bfA,<)
$$
\end{CharBRDSDAP}

If $\mathcal{L}$ also has unary relation symbols,
in the case that
$\mathcal{K}$ is a free amalgamation class, the simple characterization above holds when modified to  diagonal coding trees with the same number of roots as unary relations.
In the case that $\mathcal{K}$ contains a transitive relation,
then the  above characterization still holds.
We  show
in Theorem \ref{thm.apply}
that there is a simple way of recovering Zucker's criterion   for existence of big Ramsey structures (which uses colorings of embeddings; see Theorem 7.1 in \cite{Zucker19})
  from
 our
canonical partitions for colorings of copies of a structure.

 We see  our main contribution in this paper as providing a clear and   unified analysis of
a wide class of \Fraisse\ structures with relations of
arity at most two for which the big Ramsey degrees have a simple characterization.
 \vskip.1in



\section{Big Ramsey degrees and structures, and brief background from Part I}\label{subsec.Fcrs}

All relations in this paper will be of arity one or two,
and all languages will consist of finitely many
relation symbols (and no constant or function symbols).
Subsection 2.1  of \cite{CDPI} provides details on \Fraisse\ theory.

Given a \Fraisse\ class $\mathcal{K}$ and substructures
$\bfM,\bfN$
of $\bK$  (finite or infinite)
 with $\bfM\le\bfN$,
we use
 $\binom{\bfN}{\bfM}$
to denote the set of all substructures of
$\bfN$ which are isomorphic to
$\bfM$. Given
$\bfM\le\bfN\le\bfO$,
substructures of $\bK$, we write
$$
\bfO\ra(\bfN)_{\ell}^{\bfM}
$$
to denote that for each coloring of
$\binom{\bfO}{\bfM}$
into $\ell$ colors, there is an
 $\bfN' \in \binom{\bfO}{\bfN}$
 such that
$\binom{\bfN'}{\bfM}$
is  {\em monochromatic}, meaning that
 all members of
 $\binom{\bfN'}{\bfM}$
 have the same color.

\begin{defn}\label{defn.RP}
A \Fraisse\ class  $\mathcal{K}$ has the {\em Ramsey property} if  for any two structures $\bfA\le\bfB$ in $\mathcal{K}$ and any  $\ell \ge 2$,
there is a $\bfC\in\mathcal{K}$ with $\bfB\le\bfC$ such that
$\bfC\ra (\bB)^{\bfA}_\ell$.
\end{defn}

Equivalently, $\mathcal{K}$ has the Ramsey property if for any two structures $\bfA\le \bfB$ in $\mathcal{K}$,
\begin{equation}\label{eq.RP}
\forall \ell\ge 2,\ \ {\bK}\ra ({\bfB})^{\bfA}_{\ell}.
\end{equation}
This equivalent formulation makes comparison with big Ramsey degrees, below, quite  clear.

\begin{defn}[\cite{Kechris/Pestov/Todorcevic05}]\label{defn.bRd}
Given a \Fraisse\ class $\mathcal{K}$ and its \Fraisse\ limit $\bK$,
for any $\bfA\in\mathcal{K}$,
write
\begin{equation}\label{eq.bRd}
\forall \ell\ge 1,\ \ {\bK}\ra ({\bK})^{\bfA}_{\ell,T}
\end{equation}
when there is an
 integer $T\ge 1$ such that
for any integer $\ell \ge 1$,
given any coloring of 
$\binom{\bK}{\bfA}$
into $\ell$ colors,
there is a
 substructure $\bK'$ of $\bK$, isomorphic to $\bK$,  such that 
 $\binom{\bK'}{\bfA}$
 takes no more than $T$ colors.
We say that
 $\bK$ has {\em finite big Ramsey degrees} if for each
 $\bfA \in \mathcal{K}$,
there is an integer $T\ge 1$
such that
equation
(\ref{eq.bRd}) holds.
For a given finite $\bfA\le\bK$, when
such a $T$ exists,
we  let
$T(\bfA,\bK)$  denote the least
one, and call this number the {\em big Ramsey degree} of $\bfA$ in $\bK$.
\end{defn}

Comparing equations (\ref{eq.RP}) and
 (\ref{eq.bRd}),
 we see
that the
 difference between the Ramsey property and having finite big Ramsey degrees  is that
 the former finds a substructure of $\bK$ isomorphic to the {\em finite} structure $\bfB$ in which all copies of $\bfA$ have the {\em same} color, while
  the latter finds an {\em infinite} substructure of $\bK$ which is  isomorphic to $\bK$ in which the copies of $\bfA$ take {\em few} colors.
  It is only when  $T(\bfA,\bK)=1$ that
  there is  a subcopy of $\bK$ in which all copies of $\bfA$ have the same color.

It is normally the case  that for structures $\bfA$ with universe of size greater than one, $T(\bfA,\bK)$ is at least two, if it exists at all.
The fundamental reason
for this stems from Sierpi\'{n}ski's example that $T(2,\bQ)\ge 2$:
The enumeration of the universe $\om$ of $\bK$ plays against the relations in the structure to preserve more than one color in every subcopy of $\bK$.

A proof that $\bK$ has finite big Ramsey degrees amounts to showing that the numbers $T(\bfA, \bK)$ exist by finding upper bounds for them.
When  a method for producing the numbers
 $T(\bfA,\bK)$
 is given,
 we will say that the {\em exact big Ramsey degrees}
 have been {\em characterized}.
In all known cases where exact big Ramsey degrees have been characterized, this has been done by finding {\em  canonical partitions}
for the finite substructures of $\bK$.

\begin{defn}[Canonical Partition]\label{defn.cp}
Let $\mathcal{K}$ be a \Fraisse\ class with \Fraisse\ limit $\bK$, and let
$\bA\in\mathcal{K}$ be given.
A partition
$\{P_i:i<n\}$
 of 
$\binom{\bK}{\bA}$
 is a  {\em canonical partition}
if the following hold:
\begin{enumerate}
\item
For every subcopy
$\bJ$ of $\bK$
and each $i < n$,
$P_i\cap {\binom{\bJ}{\bA}}$
 is non-empty.
 This property is called {\em persistence}.
\item
For each finite coloring $\gamma$ of
$\binom{\bK}{\bA}$
there is a subcopy
$\bJ$ of $\bK$
such that
for each $i<n$, all members of $P_i\cap \binom{\bJ}{\bA}$
are assigned the same color by $\gamma$.
 \end{enumerate}
\end{defn}

\begin{rem}\label{rem.embvscopy}
In many papers on big Ramsey degrees, including the 
 foundational results in \cite{DevlinThesis}, \cite{Sauer06}, and \cite{Laflamme/Sauer/Vuksanovic06}, 
authors color {\em copies} of a given  $\bfA\in\mathcal{K}$ inside $\bK$, working with Definition \ref{defn.bRd}.
 More recently,  especially in papers  with direct ties to topological dynamics of automorphism groups  as in \cite{Zucker19} and \cite{Zucker20},
 authors color
{\em embeddings}  of $\bfA$ into $\bK$.
The relationship between these approaches is  simple:
A structure  $\bfA\in\mathcal{K}$ has
big
Ramsey degree $T$
for copies if and only if $\bfA$ has
big
Ramsey degree $T \cdot |$Aut$(\bfA)|$
for embeddings.
Thus, one can use whichever formulation most suits the context.
Furthermore,  we show
in Theorem \ref{thm.apply}
that there is a simple way of recovering Zucker's criterion   for existence of big Ramsey structures (which uses colorings of embeddings; see Theorem 7.1 in \cite{Zucker19})
  from
 our
canonical partitions for colorings of copies of a structure.
\end{rem}

The majority of  results on  big Ramsey degrees have been proved  using some auxiliary structure, usually trees, and recently sequences of parameter words  (see \cite{Hubicka_CS20}),  to characterize
 the persistent superstructures which code
 the finite structure
 $\bfA$.
 The exception is  the recent use of category-theoretic approaches (see for instance \cite{Barbosa20},
  \cite{Masulovic18}, and \cite{Masulovic_RBS20}).
 These superstructures  fade away
 in the case of finite structures with the Ramsey property.
An example of how this works can be seen in Theorem \ref{thm.SESAPimpliesORP}, where we recover the ordered Ramsey property  for ages of  \Fraisse\
structures
with \EEAP$^+$ from
 their big Ramsey degrees.
 However, for
 big Ramsey degrees of
  \Fraisse\ limits,
 these superstructures
possess some    essential features which persist,
leading to big Ramsey degrees
greater than one.
The following notion of Zucker  deals with such superstructures via expanded languages.

Let $\mathcal{L}$ be a relational language, $\M$ a set, $\bfN$ an $\mathcal{L}$-structure, and $\iota : \M \ra \N$ an injection.
Write $\bfN \cdot \iota$ for the unique $\mathcal{L}$-structure having underlying set $\M$ such that $\iota$ is an embedding of
$\bfN \cdot \iota$ into $\bfN$.

\begin{defn}[Zucker, \cite{Zucker19}]\label{defn.bRs}
Let $\bK$ be a \Fraisse\ structure in a
relational
language  $\mathcal{L}$ with $\mathcal{K}=$ Age$(\bK)$.
We say that $\bK$ {\em admits a big Ramsey structure} if there is a
relational
language $\mathcal{L}^* \contains\mathcal{L}$ and an $\mathcal{L}^* $-structure $\bK^*$ so that the following hold:
\begin{enumerate}
\item
The reduct of $\bK^*$ to the language $\mathcal{L}$ equals $\bK$.
\item
Each $\bfA\in\mathcal{K}$ has finitely many
expansions to an $\mathcal{L}^*$-structure
$\bfA^*\in$ Age$(\bK^*)$;
denote the set of such expansions by $\bK^*(\bfA)$.
\item
For each $\bfA \in \mathcal{K}$,
$T(\bfA, \bK) \cdot |\text{Aut}(\bfA)| = |\bK^*(\bfA)|$
\item\label{witnessing}
For each $\bfA \in \mathcal{K}$, the function
$\gamma:\Emb(\bfA,\bK)\ra\bK^*(\bfA)$
given by $\gamma(\iota)=\bK^*\cdot \iota$
witnesses the fact that
$$
T(\bfA, \bK) \cdot |\text{Aut}(\bfA)| \ge |\bK^*(\bfA)|,
$$
in the following sense:
For every subcopy $\bK'$ of $\bK$, the image of the restriction of $\gamma$ to
$\text{Emb}(\bfA, \bK')$ has size  $|\bK^*(\bfA)|$.
\end{enumerate}
Such a structure $\bK^*$ is called
a {\em big Ramsey structure} for $\bK$.
\end{defn}

Note that the definition of a big Ramsey structure for $\bK$ presupposes that $\bK$ has finite big Ramsey degrees.  The big Ramsey structure $\bK^*$, when it exists, is  a device for storing information about all the big Ramsey degrees in
$\bK$ together in a uniform way.

While the study of big Ramsey degrees has been progressing for many decades,
a recent   compelling
motivation for finding big Ramsey structures  is the following theorem.

\begin{thm}[Zucker, \cite{Zucker19}]\label{thm.Zucker}
Let $\bK$ be a \Fraisse\ structure which admits a big Ramsey structure, and let $G=$ {\rm Aut}$(\bK)$.
Then the topological group $G$ has a metrizable universal completion flow, which is unique up to isomorphism.
\end{thm}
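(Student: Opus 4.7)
The plan is to construct the claimed flow directly from the big Ramsey structure $\bK^*$, taking as the underlying $G$-space the orbit closure of $\bK^*$ inside the compact metrizable space of all $\mathcal{L}^*$-expansions of $\bK$ whose age is contained in $\mathrm{Age}(\bK^*)$. First I would equip this space $X$ with the logic topology (a subbasis given by the values of the new relation symbols on finite tuples of elements of $\bK$) and verify that $G=\mathrm{Aut}(\bK)$ acts continuously on $X$ by precomposition, noting that the reduct of every point of $X$ remains equal to $\bK$. Letting $Y$ be the closure of $G\cdot \bK^*$ in $X$, the space $Y$ is a compact metrizable $G$-flow containing $\bK^*$ as a point with dense orbit.

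Next I would verify that $Y$ is a completion flow in Zucker's sense by extracting the coherent system of finite data from $\bK^*$. For each $\bfA\in\mathcal{K}$ and each expansion $\bfA^*\in \bK^*(\bfA)$, the collection of points of $Y$ whose restriction to $\bfA$ realizes $\bfA^*$ is clopen, and the witnessing property (\ref{witnessing}) in Definition \ref{defn.bRs} ensures that every such set meets every subcopy $\bK'$ of $\bK$; this is precisely the persistence condition that distinguishes completion flows from arbitrary $G$-flows with finite combinatorial data. For universality, given any other completion flow $Z$ of $G$, I would construct an equivariant continuous map $Y\to Z$ by using the coherent Ramsey data defining $Z$ to assign a point of $Z$ to $\bK^*$ via a finitary compactness argument, extending equivariantly along $G\cdot \bK^*$, and extending by continuity to the closure; uniqueness of the universal completion flow then follows from the usual two-way terminality argument applied to any two such objects.

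The main obstacle is the universality step: one must show not just that $Y$ is \emph{a} completion flow, but that every completion flow factors through it. The combinatorial core of this reduction is to prove that any coherent finite-color assignment on $\{\mathrm{Emb}(\bfA,\bK):\bfA\in\mathcal{K}\}$ satisfying the completion-flow axioms is refined by the canonical assignment $\iota\mapsto \bK^*\!\cdot\!\iota$, and this is exactly the content of clause (\ref{witnessing}) of Definition \ref{defn.bRs} together with the equality $T(\bfA,\bK)\cdot|\mathrm{Aut}(\bfA)| = |\bK^*(\bfA)|$ in clause (3). Metrizability of $Y$ is automatic since $\mathcal{L}^*$ and the universe of $\bK$ are countable, so no serious work is needed for that part.
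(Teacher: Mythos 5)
The paper you are reading does not prove this theorem: it is imported wholesale from Zucker's paper \cite{Zucker19} (where it appears as a main structural result, roughly Theorem 8.7 of that paper), and the present authors simply cite it and then verify its hypotheses for the structures they care about. So there is no ``paper's own proof'' to compare against here; what you have written is an attempt to reconstruct Zucker's argument.

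As a reconstruction of Zucker's proof, your outline is pointed in the right general direction: Zucker does indeed realize the universal completion flow as the orbit closure $\overline{G\cdot \bK^*}$ inside a compact metrizable space of $\mathcal{L}^*$-expansions of $\bK$, and metrizability does follow immediately from countability of the language and the universe. But two things in your sketch are underdeveloped in a way that hides the actual mathematical content. First, you treat ``completion flow'' as though it were defined by a persistence condition on finite data, but Zucker's notion is a genuinely topological-dynamical one (a flow that is a completion of a certain uniform $G$-ambit attached to $G$ via its left uniformity), and the equivalence between that definition and the combinatorial conditions you invoke is itself a nontrivial theorem in \cite{Zucker19}; you cannot simply declare clause (4) of Definition \ref{defn.bRs} to ``be'' the completion-flow axiom. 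Second, and more seriously, the universality step is where essentially all of the work lives, and your phrase ``construct an equivariant continuous map $Y\to Z$ \ldots\ via a finitary compactness argument, extending equivariantly along $G\cdot \bK^*$, and extending by continuity to the closure'' skips the crux: the partially defined equivariant map on the dense orbit need not extend continuously to the closure without an argument, and it is exactly the Ramsey-theoretic coherence (clause (3) giving the exact count $T(\bfA,\bK)\cdot|\mathrm{Aut}(\bfA)| = |\bK^*(\bfA)|$ together with clause (4)) that makes the required extension possible. As written, you are asserting the conclusion of the hard lemma rather than proving it. If you intend this as a genuine proof rather than a pointer to Zucker's paper, the universality/extension step needs to be carried out in full.
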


This theorem
 answered  one direction of a question in \cite{Kechris/Pestov/Todorcevic05}
 which asked for an analogue, in the context of finite big Ramsey degrees, of the
 Kechris-Pestov-Todorcevic correspondence between the Ramsey property for a \Fraisse\ class and extreme amenability of
 the automorphism group of its \Fraisse\ limit;
 Zucker's theorem provides a  connection
 between finite big Ramsey degrees and universal completion flows.
The notion of big Ramsey degree in \cite{Zucker19}
involves colorings of embeddings of structures instead of just colorings of  substructures.
As
described
in Remark \ref{rem.embvscopy},
 this poses no problem  when applying  our results  on big Ramsey degrees,
 which involve coloring copies of a structure,
 to Theorem  \ref{thm.Zucker}.


\subsection{Brief background on  SDAP$^+$ and LSDAP$^+$}

This Subsection recalls some key notions  from \cite{CDPI} for the reader's convenience.   The reader is referred to Part I for the full exposition.

Recall the two amalgamation properties first introduced in  Subsection 2.2 of Part I, \cite{CDPI}.

\begin{defn}[\SFAP]\label{defn.SFAP}
A \Fraisse\ class $\mathcal{K}$ has the
{\em  \SFAPnonacronym\ (\SFAP)} if $\mathcal{K}$ has free amalgamation,
and given  $\bfA,\bfB,\bfC,\bfD\in\mathcal{K}$, the following holds:
Suppose
\begin{enumerate}
\item[(1)]
$\bfA$  is a substructure of $\bfC$, where
 $\bfC$ extends  $\bfA$ by two vertices,
say $\mathrm{C}\setminus\mathrm{A}=\{v,w\}$;

\item[(2)]
 $\bfA$  is a substructure of $\bfB$ and
 $\sigma$ and $\tau$  are
 $1$-types over $\bfB$  with   $\sigma\re\bfA=\type(v/\bfA)$ and $\tau\re\bfA=\type(w/\bfA)$; and
\item[(3)]
$\bfB$ is a substructure of $\bfD$ which extends
 $\bfB$ by one vertex, say $v'$, such that $\type(v'/\bfB)=\sigma$.

\end{enumerate}
  Then there is
an   $\bfE\in\mathcal{K}$ extending  $\bfD$ by one vertex, say $w'$, such that
  $\type(w'/\bfB)=\tau$, $\bfE\re (
  \mathrm{A}\cup\{v',w'\})\cong \bfC$,
  and $\bfE$ adds no other relations over $\mathrm{D}$.
\end{defn}

\begin{defn}[\EEAP]\label{defn.EEAP_new}
A \Fraisse\ class $\mathcal{K}$ has the
{\em  \EEAPnonacronym\ (\EEAP)} if $\mathcal{K}$
has disjoint amalgamation,
and the following holds:
Given   $\bfA, \bfC\in\mathcal{K}$, suppose that
 $\bfA$ is a substructure of $\bfC$, where   $\bfC$ extends  $\bfA$ by two vertices, say $v$ and $w$.
Then there exist  $\bfA',\bfC'\in\mathcal{K}$, where
$\bfA'$
contains a copy of $\bfA$ as a substructure
and
$\bfC'$ is a disjoint amalgamation of $\bfA'$ and $\bfC$ over $\bfA$, such that
letting   $v',w'$ denote the two vertices in
 $\mathrm{C}'\setminus \mathrm{A}'$ and
assuming (1) and (2), the conclusion holds:
 \begin{enumerate}
 \item[(1)]
Suppose
$\bfB\in\mathcal{K}$  is any structure
 containing $\bfA'$ as a substructure,
and let
 $\sigma$ and $\tau$  be
  $1$-types over $\bfB$  satisfying    $\sigma\re\bfA'=\type(v'/\bfA')$ and $\tau\re\bfA'=\type(w'/\bfA')$,
\item[(2)]
Suppose
$\bfD\in \mathcal{K}$  extends  $\bfB$
by one vertex, say $v''$, such that $\type(v''/\bfB)=\sigma$.
\end{enumerate}
Then
  there is
an  $\bfE\in\mathcal{K}$ extending   $\bfD$ by one vertex, say $w''$, such that
  $\type(w''/\bfB)=\tau$ and  $\bfE\re (
  \mathrm{A}\cup\{v'',w''\})\cong \bfC$.
\end{defn}

The  definitions of \SFAP\  and \EEAP\ can be stated using embeddings rather than substructures
in the standard way, but this presentation is more in-line with our applications.
Recall from Part I that 
\SFAP\ implies \EEAP\
 and  that \SFAP\ and \EEAP\ are each
preserved  under free superposition.

The following notion of coding tree of $1$-types was presented in Definition 3.1 of Part I.

\begin{defn}[The  Coding  Tree of $1$-Types, $\bS(\bK)$]\label{defn.treecodeK}
The {\em coding  tree of $1$-types}
$\bS(\bK)$
for an enumerated \Fraisse\ structure $\bK$
 is the set of all complete
  $1$-types over initial segments of $\bK$
along with a function $c:\om\ra \bS(\bK)$ such that
$c(n)$ is the
$1$-type
of $v_n$
over $\bK_n$.
The tree-ordering is simply inclusion.
\end{defn}

The next several definitions were presented in Subsection 4.1 of Part I.

\begin{defn}[The Unary-Colored  Coding Tree of $1$-Types,
$\bU(\bK)$]\label{defn.ctU}
Let $\mathcal{K}$ be a \Fraisse\ class in language $\mathcal{L}$ and $\bK$ an enumerated \Fraisse\ structure for $\mathcal{K}$.
For $n < \om$,
let $c_n$ denote the $1$-type
of $v_n$
over $\bK_n$
(exactly as in the definition of $\bS(\bK)$).
Let $\mathcal{L}^-$ denote
the collection of all relation symbols in $\mathcal{L}$ of arity greater than one,
and let $\bK^-$ denote the reduct of $\bK$ to $\mathcal{L}^-$
and $\bK_n^-$ the reduct of $\bK_n$ to $\mathcal{L}^-$.

For $n<\om$, define the {\em $n$-th level, $\bU(n)$}, to  be
the collection
of all $1$-types
$s$ over $\bK^-_n$
in the language $\mathcal{L}^-$
such that
for some $i\ge n$,
$v_i$ satisfies $s$.
Define $\bU$
 to be
$\bigcup_{n<\om}\bU(n)$.
The tree-ordering on $\bU$ is simply inclusion.
The {\em unary-colored coding tree of $1$-types}
is
the tree $\bU$ along with the  function $c:\om\ra \bU$ such that $c(n)=c_n$.
Thus,
$c_n$ is the $1$-type
(in the language $\mathcal{L}^-$) of $v_n$
 in $\bU(n)$  along with the additional ``unary color''
$\gamma\in\Gamma$ such that
$\gamma(v_n)$ holds in $\bK$.
 \end{defn}

\begin{defn}[Diagonal tree]\label{def.diagskew}
We call a subtree $T\sse \bS$
or $T\sse\bU$
{\em diagonal}
if each level of $T$ has at most one splitting node,
each splitting node in $T$ has degree two (exactly two immediate successors), and
coding node levels in $T$ have no splitting nodes.
\end{defn}

\begin{notation}\label{notn.cong<}
Given a diagonal subtree $T$
(of $\bS$ or $\bU$)
with coding nodes,
we let
$\lgl c^T_n: n<N\rgl$,
where $N\le\om$,
  denote the enumeration of the coding nodes in $T$ in order of increasing length.
Let $\ell^T_n$ denote  $|c^T_n|$, the {\em length} of $c^T_n$.
We shall call a node in $T$ a {\em critical node} if it is either a splitting node or a coding node in $T$.
Let
\begin{equation}
\widehat{T}=\{t\re n:t\in T\mathrm{\ and \ } n\le |t|\}.
\end{equation}
Given
$s\in T$ that is not a splitting node in $T$,
we let $s^+$ denote the immediate successor of $s$ in $\widehat{T}$.
Given any
$\ell$,
we let $T\re\ell$ denote the set of those nodes in $\widehat{T}$ with length $\ell$,
and we let
  $T\rl \ell$
  denote the
  union of the
  set of nodes in $T$ of length less than  $\ell$
  with the set $T\re\ell$.
\end{notation}

We write $\bK\re T$ to denote the substructure of
$\bK$ on $\mathrm{N}^T$, the set of vertices of $\bK$ represented by the coding nodes in $T$.

\begin{defn}[Diagonal Coding Subtree]\label{defn.sct}
A subtree $T\sse\bU$ is called a {\em diagonal coding subtree} if $T$ is diagonal and  satisfies the following properties:
  \begin{enumerate}
 \item
   $\bK\re T\cong\bK$.
  \item
  For each $n<\om$, the collection of $1$-types
  in
  $T\re (\ell^T_n+1)$ over $\bK\re (T\rl \ell^T_n)$
   is in
    one-to-one correspondence with the collection of
   $1$-types in $\bU(n+1)$.
   \item[(3)]
Given $m<n$
and letting
  $A:=T\rl(\ell^T_m-1)$,
if $c^T_n\contains c^T_m$
 then
 $$
  (c^{T}_n)^+(c^{T}_n; A)
  \sim
  (c^{T}_m)^+(c^{T}_m; A).
$$
\end{enumerate}
Likewise, a subtree $T\sse\bS$ is a {\em diagonal coding subtree} if the above hold with $\bU$ replaced by $\bS$.
\end{defn}

Recall that 
requirement (3) can be  met by
the \Fraisse\ limit of
any \Fraisse\ class satisfying \EEAP.

We say that a tree  $T$ is  {\em perfect}  if $T$ has  no terminal nodes, and  each node in  $T$ has  at least  two  incomparable extensions in $T$.

\begin{defn}[Diagonal Coding Tree Property]\label{defn.DCTP}
A \Fraisse\ class $\mathcal{K}$ in language $\mathcal{L}$  satisfies the {\em Diagonal Coding Tree Property}
if  given any enumerated \Fraisse\ structure $\bK$
for $\mathcal{K}$,
there is a diagonal coding
subtree $T$ of either $\bS$ or $\bU$
such that $T$ is perfect.
\end{defn}

\begin{defn}[The Space of  Diagonal Coding Trees of $1$-Types, $\mathcal{T}$]\label{def.subtree}
Let $\bK$ be any enumerated \Fraisse\  structure
and let $\bT$ be a fixed diagonal coding subtree of $\bU$.
Then the space of coding trees
 $\mathcal{T}(\bT)$ consists of all   subtrees $T$ of $\bT$ such that
 $T\sim\bT$.
Members of $\mathcal{T}(\bT)$ are called simply {\em coding trees}, where diagonal is understood to be  implied.
We shall usually simply write $\mathcal{T}$ when $\bT$ is clear
from context.
For $T\in \mathcal{T}$,  we write
$S\le T$ to mean that  $S$ is a  subtree of $T$ and $S$ is a member of $\mathcal{T}$.
\end{defn}

We will work in  a diagonal coding subtree of $\bS$ whenever such a subtree exists.
This is always the case for \Fraisse\ classes satisfying SFAP.
For \Fraisse\ limits   with no unary relations satisfying SDAP,
note that  $\bS=\bU$; so in this case, a diagonal coding subtree of $\bU$ is the same as a diagonal coding subtree of $\bS$.
If $\bK$ is a \Fraisse\ class   with unary relations  satisfying SDAP and there is 
 a diagonal coding subtree of $\bU$ but 
no diagonal coding subtree of $\bS$,
then there  are subsets $P_0,\dots, P_j$ of the unary 
 relation symbols of $\mathcal{K}$ 
and a  diagonal coding
subtree $T\sse \bU$ such that  at some level  $\ell$  below the first coding node of $T$,  the following hold:
$T\re\ell$ has exactly $j+1$ nodes, say $t_0,\dots,t_j$, and for each $i\le j$, 
every coding node in the tree $T$ restricted above $t_i$  has unary relation in $P_i$ and moreover, each of the unary relations in $P_i$ occurs densely in  $T$  restricted above $t_i$.
By possibly adding unary relation symbols, we may assume that $P_0,\dots,P_j$ is a partition of the unary relation symbols. 
Thus, without loss of generality, we will hold to the following  convention  for the remainder of this article.

\begin{convention}\label{conv.Gamma_ts}
Let $\mathcal{K}$ be a \Fraisse\ class in a language $\mathcal{L}$ and $\bK$ a \Fraisse\ limit of $\mathcal{K}$.
Either there is a diagonal coding subtree  $\bT$ of $\bS(\bK)$,
or else there is a diagonal coding subtree  $\bT$ of $\bU(\bK)$  and if there are any unary relations, then  each  unary relation occurs densely in $\bT$.
 \end{convention}

The following definitions are from Subsection 4.2 of Part I.

The following extends Notation
\ref{notn.cong<}
to
subsets of trees.
For a  finite subset $A\sse\bT$,  let
\begin{equation}
 \ell_A=\max\{|t|:t\in A\}\mathrm{\ \ and\ \ } \max(A)=
\{s\in A: |s|=\ell_A\}.
 \end{equation}
For $\ell\le \ell_A$,
let
\begin{equation}
A\re \ell=\{t\re \ell : t\in A\mathrm{\ and\ }|t|\ge \ell\}
\end{equation}
 and let
\begin{equation}
A\rl \ell=\{t\in A:|t|< \ell\}\cup A\re \ell.
\end{equation}
Thus, $A\re \ell$ is a level set, while $A\rl \ell$ is the set of nodes in $A$ with length less than $\ell$ along with the truncation
to $\ell$ of the  nodes in $A$ of length at least
 $\ell$.
Notice that
$A\re \ell=\emptyset$ for $\ell>\ell_A$, and
 $A\rl \ell=A$  for  $\ell\ge \ell_A$.
 Given $A,B\sse T$, we say that $B$ is an {\em initial segment} of $A$ if   $B=A\rl \ell$
 for some $\ell$ equal to
   the length of some node in $A$.
   In this case, we also say that
   $A$ {\em end-extends} (or just {\em extends}) $B$.
If $\ell$ is not the length of any node in $A$, then
  $A\rl \ell$ is not a subset  of $A$, but  is  a subset of $\widehat{A}$, where
  $\widehat{A}$ denotes $\{t\re n:t\in A\mathrm{\ and\ } n\le |t|\}$.

Define $\max(A)^+$ to be the set of nodes
$t$ in $T\re (\ell_A+1)$ such that $t$ extends $s$ for some $s \in \max(A)$.
Given a node $t\in T$ at the level of a coding node in $T$, $t$ has exactly one immediate successor in $\widehat{T}$, which  we recall
from Notation \ref{notn.cong<}
is  denoted as
$t^+$.

\begin{defn}[$+$-Similarity]\label{def.plussim}
Let $T$ be a diagonal coding tree for
the \Fraisse\ limit $\bK$ of
a \Fraisse\
class $\mathcal{K}$, and
suppose $A$ and $B$
are finite subtrees of $T$.
We write  $A\plussim B$ and say that
 $A$ and $B$ are
 {\em $+$-similar} if and only if
  $A\sim B$ and
 one of the following two cases holds:
 \begin{enumerate}
 \item[]
   \begin{enumerate}
\item[\bf Case 1.]
 If $\max(A)$  has a splitting node in $T$,
 then so does $\max(B)$,
  and the similarity map from $A$ to $B$  takes the splitting node in $\max(A)$ to the splitting node in $\max(B)$.
    \end{enumerate}
     \end{enumerate}
       \begin{enumerate}
    \item[]
        \begin{enumerate}
  \item[\bf Case 2.]
If $\max(A)$ has a coding node, say
 $c^A_n$,
and  $f:A\ra B$ is  the similarity map,
then
  $s^+(n;A)\sim f(s)^+(n;B)$ for each $s\in \max(A)$.
 \end{enumerate}
    \end{enumerate}

Note that $\plussim$ is an  equivalence relation, and  $A\plussim B$ implies $A\sim B$.
When $A\sim B$ ($A\plussim B$), we say that they have the same {\em similarity type} ({\em $+$-similarity type}).
\end{defn}

\begin{rem}\label{rem.SplussimT}
For infinite trees $S$ and $T$ with no terminal  nodes, $S\sim T$ implies that for
each $n$, letting $d^S_n$ and $d^T_n$ denote the $n$-th critical nodes of $S$ and $T$, respectively,
$S\re|d_n^S|\plussim T\re|d_n^T|$.
\end{rem}

Let $T$ be a diagonal coding tree for the \Fraisse\ limit $\bK$ of some \Fraisse\
class $\mathcal{K}$.
We adopt the following notation  from topological Ramsey space theory (see \cite{TodorcevicBK10}).
Given  $k<\om$,
we define
$r_k(T)$ to be  the
restriction of $T$ to the levels of the first $k$ critical nodes of $T$;
that is,
\begin{equation}
r_k(T)=\bigcup_{m<k}T(m),
\end{equation}
where $T(m)$ denotes the set of all nodes in $T$ with length equal to $|d^T_m|$.
It follows from Remark \ref{rem.SplussimT} that  for any
 $S,T\in \mathcal{T}$,
$r_k(S)\plussim r_k(T)$.
Define $\mathcal{AT}_k$ to be the set of {\em $k$-th approximations}
to members of $\mathcal{T}$;
that is,
\begin{equation}
\mathcal{AT}_k=\{r_k(T):T\in\mathcal{T}\}.
\end{equation}
For $D\in\mathcal{AT}_k$
and $T\in\mathcal{T}$,
define the set
\begin{equation}
[D,T]=\{S\in \mathcal{T}:r_k(S)=D\mathrm{\ and\ } S\le T\}.
\end{equation}
Lastly, given
 $T\in\mathcal{T}$,
 $D=r_k(T)$,  and $n>k$,
  define
\begin{equation}
r_n[D,T]=\{r_n(S):S\in [D,T]\}.
\end{equation}

\begin{defn}[Extension Property]\label{defn.ExtProp}
We say that
$\bK$
has the {\em Extension Property} when the following holds:
\begin{enumerate}
\item[(EP)]
Suppose $A$ is a finite or infinite subtree of   some
$T\in\mathcal{T}$.
Let $k$ be given  and suppose
$\max(r_{k+1}(A))$ has a splitting node.
Suppose that $B$ is a $+$-similarity copy of $r_k(A)$ in $T$.
Let  $u$ denote the splitting node in $\max(r_{k+1}(A))$,
and let
 $s$ denote  the node in $\max(B)^+$ which must be extended to a splitting node in order to obtain a $+$-similarity copy of $r_{k+1}(A)$.
If $s^*$ is a splitting node  in $T$  extending $s$,
then  there are extensions of the rest of the nodes in $\max(B)^+$ to the same length as $s^*$ resulting in a $+$-similarity copy
of $r_{k+1}(A)$ which
can be extended to a copy of $A$.
\end{enumerate}
\end{defn}

\begin{defn}[\EEAP$^+$]\label{defn_EEAP_newplus}
A \Fraisse\
structure $\bK$
has the
{\em  \EEAPnonacronym$^+$ (\EEAP$^+$)} if
its age
$\mathcal{K}$ satisfies \EEAP,
and $\bK$ has
the Diagonal Coding Tree Property and the Extension Property.
\end{defn}

The  coding tree version of \EEAP$^+$ follows from \EEAP$^+$ and is the version  used in proofs.

\begin{defn}[\EEAP$^+$, Coding Tree Version]\label{def.EEAPCodingTree}
A \Fraisse\ class $\mathcal{K}$ satisfies the
 {\em
Coding Tree Version of  \EEAP$^+$}
 if and only if   $\mathcal{K}$ satisfies
 the disjoint amalgamation property
 and,
 letting $\bK$ be any enumerated \Fraisse\ limit of $\mathcal{K}$, $\bK$ satisfies
 the Diagonal Coding Tree Property,
the Extension Property, and
the following
condition:

Let $T$ be any  diagonal coding  subtree   of $\bU(\bK)$  (or of $\bS(\bK)$), and let $\ell<\om$ be given.
Let
$i,j$ be any distinct integers such that
$\ell<\min(|c^T_i|,|c^T_j|)$,
and let
$\bfC$ denote  the substructure of $\bK$  represented by the coding nodes in $T\rl \ell$ along with
 $\{c^T_i,c^T_j\}$.
Then there are $m \ge\ell$
and
$s',t'\in T\re m$ such that
$s'\contains s$ and $t'\contains t$
 and,
assuming (1) and (2), the conclusion holds:
 \begin{enumerate}
\item[(1)]
Suppose $n\ge m$ and
$s'',t''\in T\re n$
 with $s''\contains s'$ and
$t''\contains t'$.

\item[(2)]
Suppose $c^T_{i'}\in T$ is any coding node extending $s''$.
\end{enumerate}
Then
there is a coding node $c^T_{j'}\in T$, with $j'>i'$,
such that
$c_{j'}\contains t''$ and
the substructure  of $\bK$  represented by the coding nodes in $T\rl \ell$ along with $\{c^T_{i'},c^T_{j'}\}$ is isomorphic to
$\bfC$.
\end{defn}

We now recall  the Labeled Substructure Disjoint Amalgamation Property$^+$ from Subsection  4.4 of Part I.

\begin{defn}[Labeled Diagonal Coding Tree]\label{defn.LDCT}
A diagonal coding tree $T$ is {\em labeled} if the following hold:
There
is some $2\le q<\om$,
and a function
 $\psi$ defined on the set of splitting nodes in $\bT$ and  having range $q$,
 such that  the following holds:
 \begin{enumerate}
 \item[(a)]
 If $s\sse t$ are splitting nodes in $T$, then $\psi(s)\ge \psi(t)$.
 \item[(b)]
 For each splitting node $s\in T$ and each $n>|s|$, there is a splitting node $t\contains s$ with $|t|\ge n$ such that $\psi(t)=\psi(s)$.
\item[(c)]
The language for $\bK$ has at least one binary relation symbol (besides equality), and the
value  of $\psi$ is determined  by some partition of all  pairs   of
 partial $1$-types  involving only binary
 relation symbols
 over a one-element structure into pieces
 $Q_0,\dots,Q_{q-1}$,
such that
whenever $s$ is a splitting node in $\bT$,
$\psi(s)=m$ if and only if the following hold:
whenever  $c^{\bT}_j,c^{\bT}_k$ are coding nodes in $\bT$ with $c^{\bT}_j\wedge c^{\bT}_k=s$,
then
the pair of partial $1$-types of $v^{\bT}_j$ and  $v^{\bT}_k$ over
$\bK\re\{v_|s|\}$  is in $Q_m$.
\item[(d)]
The maximal splitting node $s$ below a coding node in $\bT$ has $\psi(s)=0$.
\end{enumerate}
\end{defn}

Given (a) and (b), the function $\psi$ can be extended to all nodes of $T$ as follows:
For each non-splitting node $t\in T$, define $\psi(t)$ to equal $\psi(s)$, where $s$ is the maximal splitting node in $T$ such that $s\sse t$.

\begin{notation}
For a labeled diagonal coding tree $\bT$, for $S,T$ subtrees of $\bT$,
write $S\Lsim T$ to mean that $S\sim T$ and the similarity map $f:S\ra T$ preserves $\psi$,
meaning that for each $s\in S$,
$\psi(s)=\psi(f(s))$.
\end{notation}

\begin{defn}[${L+}$-Similarity]\label{def.Lplussim}
Let $T$ be a labeled diagonal coding tree with labeling function $\psi$ for
the \Fraisse\ limit $\bK$ of
a \Fraisse\
class $\mathcal{K}$, and
suppose $A$ and $B$
are finite subtrees of $T$.
We write  $A\Lplussim B$ and say that
 $A$ and $B$ are
 {\em $\mathrm{L}+$-similar} if and only if
  $A\plussim B$ and 
  $A\Lsim B$.
\end{defn}

\begin{defn}[Labeled Extension Property]\label{LEP}
We say that $\bK$ has the {\em Labeled Extension Property} when the following condition (LEP) holds:
\begin{enumerate}
\item[(LEP)]
There
is some $2\le q<\om$
and a labeling function
 $\psi$ taking $\bT$ onto $q$ satisfying Definition \ref{defn.LDCT}
 such that  the following holds:
Suppose $A$ is a finite or infinite subtree of    some $T\in\mathcal{T}$.
Let $k$ be given  and suppose
$\max(r_{k+1}(A))$ has a splitting node.
Suppose that $B$ is an $\mathrm{L}+$-similarity copy of $r_k(A)$ in $T$.
Let  $u$ denote the splitting node in $\max(r_{k+1}(A))$,
and let
 $s$ denote  the node in $\max(B)^+$ which must be extended to a splitting node in order to obtain a $+$-similarity copy of $r_{k+1}(A)$, and note that $\psi(s)\ge \psi(u)$.
 Then  for each  $s'\contains s$ in $T$ with $\psi(s')\ge \psi(u)$,
 there exists  a splitting node $s^*\in T$ extending $s'$ such that $\psi(s^*)=\psi(u)$.
 Moreover,
 given such an $s^*$,
 there are extensions of the rest of the nodes in $\max(B)^+$ to the same length as $s^*$ resulting in an $\mathrm{L}+$-similarity copy of $r_{k+1}(A)$.
\end{enumerate}
\end{defn}

\begin{defn}[LSDAP$^+$]\label{defn_LSDAP_plus}
A \Fraisse\
structure $\bK$
has the
{\em  Labeled Substructure Disjoint Amalgamation Property$^+$ (LSDAP$^+$)} if
its age
$\mathcal{K}$ satisfies SDAP,
and $\bK$ has  a labeled diagonal coding tree satisfying 
the Diagonal Coding Tree Property and the Labeled Extension Property.
\end{defn}

\begin{defn}[The space of diagonal coding trees for LSDAP$^+$ structures]\label{spacectLSDAP}
If $\bK$ satisfies LSDAP$^+$, then given a diagonal coding tree $\bT$ for $\bK$ with labeling $\psi$,
we let $\mathcal{T}$ denote the set of all subtrees $T$ of $\bT$ such that $T\Lsim \bT$.
\end{defn}


\section{Exact upper bounds for big Ramsey degrees}\label{sec.FRT}

This section contains the Ramsey theorem
for colorings of copies of a given
finite substructure of a \Fraisse\ structure  satisfying
\EEAP$^+$ or LSDAP$^+$.
Theorem
\ref{thm.onecolorpertype}
provides
upper bounds for the big Ramsey degrees
of such structures
when the language has relation symbols of arity at most two, and these
turn out to be exact.
The proof of exactness  will be given   in Section \ref{sec.brd}.

The proof of Theorem
\ref{thm.onecolorpertype} proceeds by induction arguments starting with the Level Set Ramsey Theorem (Theorem 5.4 from Part I).
We now  recall  notation and  definitions from Part I.

Recall the following convention, which appears as Convention 4.12 in \cite{CDPI}.

\begin{conv}\ref{conv.Gamma_ts}: 
Let $\mathcal{K}$ be a \Fraisse\ class in a language $\mathcal{L}$ and $\bK$ a \Fraisse\ limit of $\mathcal{K}$.
Either there is a diagonal coding subtree of $\bS(\bK)$,
or else there is a diagonal coding subtree of $\bU(\bK)$ in which all unary relations occur densely.
 \end{conv}
 
 Note that for any \Fraisse\ structure $\bK$, $\bU(\bK)$ contains a similarity copy of $\bS(\bK)$.  Thus, we will simply write $\bU(\bK)$ from now on.


Given any $A\sse T$, we will abuse notation and use $r_k(A)$ to denote the first $k$ levels of the tree induced by the meet-closure of $A$.
By an {\em antichain} of coding nodes, we mean a set of  coding nodes  which is pairwise  incomparable with respect to  the tree partial order of inclusion.
\vskip.1in

\noindent\bf{Set-up for the Level Set Ramsey Theorem.} \rm
Let $T$ be a diagonal coding tree  in $\mathcal{T}$.
Fix a finite antichain  of coding nodes  $\tilde{C}\sse T$.
We abuse notation and also write  $\tilde{C}$ to denote  the tree that its meet-closure induces in $T$.
Let $\tilde{A}$ be a fixed proper initial segment of
$\tilde{C}$, allowing for $\tilde{A}$ to be the empty set.
Thus, $\tilde{A}= \tilde{C}\rl \ell$, where $\ell$
is the length of some splitting or  coding node in
$\tilde{C}$
(let $\ell=0$ if $\tilde{A}$ is empty).
Let $\ell_{\tilde{A}}$ denote this $\ell$, and note that
any non-empty
 $\max(\tilde{A})$ either  has a coding node or a splitting node.
Let $\tilde{x}$ denote the shortest splitting or coding node in $\tilde{C}$ with length greater than $\ell_{\tilde{A}}$,
and define  $\tilde{X}=\tilde{C}\re |\tilde{x}|$.
Then $\tilde{A}\cup\tilde{X}$ is an initial segment of
 $\tilde{C}$; let $\ell_{\tilde{X}}$ denote $|\tilde{x}|$.
There are two cases:

\begin{enumerate}
\item[]
\begin{enumerate}
\item[\bf Case (a).]
$\tilde{X}$ has a splitting node.
\end{enumerate}
\end{enumerate}

\begin{enumerate}
\item[]
\begin{enumerate}
\item[\bf Case (b).]
$\tilde{X}$ has a coding node.
\end{enumerate}
\end{enumerate}

Let $d+1$ be the number of nodes in $\tilde{X}$ and  index these nodes as $\tilde{x}_i$, $i\le d$,
where $\tilde{x}_d$ denotes  the critical  node (recall that {\em critical node}  refers to  a splitting or  coding node).
Let
\begin{equation}\label{eq.tildeB}
\tilde{B}=\tilde{C} \re (\ell_{\tilde{A}}+1).
\end{equation}
Then
  $\tilde{X}$ is a level set   equal to or  end-extending  the level set $\tilde{B}$.
  For each $i\le d$, define
  \begin{equation}
  \tilde{b}_i=\tilde{x}_i\re \ell_{\tilde{B}}.
\end{equation}
Note that we
consider nodes in  $\tilde{B}$  as simply nodes to be extended; it
does not matter
whether the nodes in $\tilde{B}$ are coding, splitting, or neither in $T$.

\begin{defn}[Weak similarity]\label{defn.weaksim}
Given finite subtrees $S,T\in\mathcal{T}$ in which each coding node is terminal,
we say that  $S$ is {\em weakly similar} to $T$, and write $S\wsim T$,
if and only if
$S\setminus \max(S)\plussim T\setminus\max(T)$.
We say that  $S$ is {\em L-weakly similar} to $T$, and write $S\Lwsim T$,
if and only if
$S\setminus \max(S)\Lplussim T\setminus\max(T)$.
\end{defn}

In what  follows, we put the technicalities for the LSDAP$^+$ case in parentheses.

\begin{defn}[$\Ext_T(B;\tilde{X})$]\label{defn.ExtBX}
Let $T\in\mathcal{T}$ be  fixed  and let $D=r_n(T)$ for some $n<\om$.
Suppose $A$ is a subtree of $D$ such that
$A \plussim \tilde{A}$ ($A \Lplussim \tilde{A}$)
and
 $A$ is extendible to a similarity ($L$-similarity) copy of $\tilde{C}$ in $T$.
Let $B$ be a subset of  the level set $\max(D)^+$ such that $B$ end-extends  or equals
$\max(A)^+$
and $A\cup B\wsim\tilde{A}\cup\tilde{B}$
($A\cup B\Lwsim\tilde{A}\cup\tilde{B}$).
Let $X^*$ be a level set end-extending $B$ such that $A\cup X^*\plussim \tilde{A}\cup\tilde{X}$ 
($A\cup X^*\Lplussim \tilde{A}\cup\tilde{X}$).  
Let $U^*=T\rl (\ell_B-1)$.
Define
$\Ext_T(B;X^*)$ to be the collection of all level sets $X\sse T$
such that
\begin{enumerate}
\item
$X$ end-extends $B$;
\item
$U^*\cup X\plussim U^*\cup X^*$ ($U^*\cup X\Lplussim U^*\cup X^*$);
\item
$A\cup X$ extends to a copy of $\tilde{C}$.
\end{enumerate}
\end{defn}

For Case (b), condition (3) follows from (2).
For Case (a),
the Extension Property (Labeled Extension Property)
guarantees that
for any level set  $Y$ end-extending $B$,
there is a level set $X$ end-extending $Y$ such that $A\cup X$ satisfies
condition (3).
In both cases, condition (2) implies that $A\cup X\plussim \tilde{A}\cup\tilde{X}$ ($A\cup X\Lplussim \tilde{A}\cup\tilde{X}$).

\begin{thm}[Level Set Ramsey Theorem]\label{thm.matrixHL}
Suppose that
$\mathcal{K}$
has \Fraisse\ limit $\bK$ satisfying
\EEAP$^+$ (or LSDAP$^+$), and
$T\in\mathcal{T}$ is given.
Let $\tilde{C}$ be a finite antichain of coding nodes in $T$,   $\tilde{A}$ be an initial segment of $\tilde{C}$,
and $\tilde{B}$ and $\tilde{X}$ be defined as above.
Suppose
 $D=r_n(T)$ for some $n<\om$,
 and
 $A\sse D$
and $B\sse \max(D^+)$
satisfy
$A\cup B\wsim \tilde{A}\cup\tilde{B}$
 ($A\cup B\Lwsim\tilde{A}\cup\tilde{B}$).
Let  $X^*$ be a level set end-extending $B$ such that $A\cup X^*\plussim \tilde{A}\cup\tilde{X}$ ($A\cup X^*\Lplussim \tilde{A}\cup\tilde{X}$).
Then given any coloring
  $h:  \Ext_T(B;X^*)\ra 2$,
  there is a coding tree $S\in [D,T]$ such that
$h$ is monochromatic on $\Ext_S(B;X^*)$.
\end{thm}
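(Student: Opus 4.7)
The plan is to prove Theorem~\ref{thm.matrixHL} via a Harrington-style forcing argument to extract a color-homogeneous level set, followed by a fusion argument building the subtree $S \in [D,T]$. This is the standard template for coding-tree analogues of the Halpern--L\"{a}uchli theorem, and it is the natural setting for combining the amalgamation hypotheses (SDAP$^+$ or LSDAP$^+$) with the tree-Ramsey pigeonhole.

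First, I would set up a forcing notion $\mathbb{P}$ whose conditions are tuples $\langle t_i : i \le d \rangle$ of nodes in $T$ at a common level $\ell \ge \ell_{\tilde{B}}$, with each $t_i$ extending the corresponding node in $B$, such that $A \cup \{t_i : i \le d\}$ is extendible to a $+$-similarity (respectively $\mathrm{L}+$-similarity) copy of $\tilde{A} \cup \tilde{X}$ inside $T$; the ordering is coordinatewise end-extension. By the Extension Property (Labeled Extension Property in the LSDAP$^+$ case), this forcing is nonempty and every condition extends to arbitrary height in $T$. Since $\mathbb{P}$ is essentially countable, it is ccc, and the coloring $h$ yields a $\mathbb{P}$-name $\dot{\epsilon}$ for the color of the generic top-level set. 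By a standard density argument there exist a condition $\bar{t}^{*}$ and a color $\epsilon \in \{0,1\}$ with $\bar{t}^{*} \Vdash \dot{\epsilon} = \epsilon$, yielding a dense set $\mathcal{D}$ of extensions of $\bar{t}^{*}$ whose top-level sets $X$ all satisfy $h(X) = \epsilon$.

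Second, I would construct $S \in [D,T]$ by a fusion argument, building $S$ level by level above the level of $\bar{t}^{*}$. At each stage I ensure two simultaneous requirements: the tree constructed so far is $+$-similar (respectively $\mathrm{L}+$-similar) to the corresponding initial segment of $\bT$, and every level set in $\Ext_S(B;X^*)$ appearing so far has color $\epsilon$. The former is enforced by the Diagonal Coding Tree Property together with the Coding Tree Version of SDAP$^+$, which provide splitting nodes and coding nodes realizing the correct quantifier-free $1$-types at the appropriate levels. The latter is enforced by always extending the active coordinates into $\mathcal{D}$ before introducing any new critical node.

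The main obstacle will lie in Case (a), where $\tilde{X}$ has a splitting node above $\tilde{A}$. The fusion must simultaneously (i) extend one distinguished coordinate to a splitting node of the appropriate position (and, in the LSDAP$^+$ setting, the appropriate label $\psi$-value), (ii) extend all remaining coordinates to the same level, (iii) preserve membership in $\mathcal{D}$, and (iv) keep the resulting tree extendible to a copy of $\tilde{C}$. The Extension Property is tailored to provide (i), (ii), and (iv) together, and the density of $\mathcal{D}$ combined with the Extension Property yields (iii); it is precisely this delicate interleaving of forcing-density with amalgamation-extension that lies at the heart of the argument. In Case (b) the top of $\tilde{X}$ is a coding node rather than a splitting node, and although no new branching is required, the new coding node must still realize a prescribed quantifier-free $1$-type over the existing structure, which is handled by the Coding Tree Version of SDAP$^+$.
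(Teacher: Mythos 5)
The paper you were given is Part II of a two-part series, and it does \emph{not} prove Theorem~\ref{thm.matrixHL}: the theorem is recalled verbatim as ``Theorem 5.4 from Part I'' (see the sentence immediately before it, and Remark~\ref{rem.plussimfornoncn}, which cites ``the proof in Part I''), and Part II proceeds to \emph{use} it as a black-box pigeonhole principle. So there is no proof in this source to compare yours against line by line. I can still assess your sketch on its merits as a reconstruction of the Part~I argument.

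Your high-level template is the right one for this family of results: a Harrington-style forcing argument supplies the pigeonhole, and a fusion then builds $S\in[D,T]$, with the Extension Property and the Coding Tree Version of \EEAP$^+$ handling the structural obligations at splitting and coding levels. The split into Case (a)/(b) and the observation that (a) is the delicate one are also on target. However, the heart of the forcing step is glossed over in a way that does not actually work as stated. You write that $\mathbb{P}$ is ``essentially countable, hence ccc'' and that a ``standard density argument'' yields a single condition $\bar t^*$ with $\bar t^*\Vdash\dot\epsilon=\epsilon$ and hence a dense set $\mathcal D$ of same-colored level sets. Two problems. First, a single name $\dot\epsilon$ ``for the color of the generic top-level set'' is not well-defined: the coloring $h$ is on level sets, and the generic produces a family of branches, so there is one color \emph{per level}, not one global color. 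Second, even granting a name $\dot\epsilon_\ell$ per level $\ell$, a condition deciding one of them decides only that one level set; it does not give density of same-colored extensions across all levels, which is exactly what the fusion requires. The mechanism that actually closes this gap in Harrington's method (and in Dobrinen's coding-tree adaptations that this paper is modeled on) is to take the forcing to add $\kappa$-many generic branches for a deliberately large $\kappa$ and then apply an \Erdos--Rado partition relation to extract a homogeneous set of indices and a single condition deciding all the corresponding level-set colors to the same $\epsilon$ simultaneously. Without this \Erdos--Rado homogenization step your argument stalls at the transition from ``one decided condition'' to ``a dense supply of same-colored extensions at every level of the fusion.'' I would revise the forcing paragraph to (i) use a poset of branches indexed by a suitably large cardinal rather than a single tuple, (ii) replace the ccc/density claim with the \Erdos--Rado argument, and only then (iii) run the fusion you describe.
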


\begin{rem}\label{rem.plussimfornoncn}
It follows from the  proof in Part I  of  Theorem \ref{thm.matrixHL} that in Case (b), the coding nodes in any member $X\in\Ext_S(B;X^*)$ extend the coding node $t^*_d$.
It then follows from
(3) in Definition \ref{defn.sct}
that for every level set $X\sse S$ with $A\cup X\sim\tilde{A}\cup X^*$, the coding node $c$ in $X$  automatically satisfies
$c^+(c;A)\sim  (t^*_d)^+(t^*_d;A)
\sim \tilde{x}_d^+(\tilde{x}_d;\tilde{A})$, where $\tilde{x}_d$ denotes the  coding node in $X^*$.
Thus, $A\cup X\plussim \tilde{A}\cup X^*$ 
($A\cup X\Lplussim \tilde{A}\cup X^*$) if and only if the
non-coding nodes in $X$ have immediate successors with similar passing types over $A\cup\{c\}$ as their counterparts in $X^*$ have over $\tilde{A}\cup\{\tilde{x}_d\}$.

Moreover, for languages with only unary and binary relations, in Case (b)  the set
$\Ext_T(B;X^*)$ is exactly the set of all end-extensions $X$ of $B$ such that $A\cup X\plussim \tilde{A}\cup\tilde{X}$ ($A\cup X\Lplussim \tilde{A}\cup\tilde{X}$).
These observations
will be useful in the proof of  next theorem.
\end{rem}

Recall that two antichains of coding nodes are considered similar (L-similar) if the trees induced by their meet-closures are similar (L-similar).

\begin{thm}\label{thm.onecolorpertreetype}
Suppose that $\mathcal{K}$
is a \Fraisse\ class in a language with
relation symbols  of arity at most two,
and suppose that $\mathcal{K}$
has a \Fraisse\ limit satisfying
\EEAP$^+$ (LSDAP$^+$).
Let
$\bT$ be  a diagonal coding subtree of $\bU(\bK)$,
let
$\tilde{C}\sse\bT$ be an antichain of coding nodes,
 and let $T\in\mathcal{T}$ be fixed.
Given any coloring of the set $\{C\sse T:C\sim \tilde{C}\}$
($\{C\sse T:C\Lsim \tilde{C}\}$),
there is an $S\le T$ such that all  members  of
$\{C\sse S:C\sim \tilde{C}\}$ 
($\{C\sse S:C\Lsim \tilde{C}\}$) have the same color.
\end{thm}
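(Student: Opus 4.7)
The plan is to proceed by induction on the number $k$ of critical levels in the meet-closure of $\tilde{C}$, using Theorem~\ref{thm.matrixHL} (the Level Set Ramsey Theorem) as a one-level pigeonhole together with a standard fusion argument to handle all copies of a fixed initial segment simultaneously. It is technically convenient to prove the slightly more general statement in which $\tilde{C}$ is allowed to be an arbitrary finite subtree of $\bT$ (not just the meet-closure of an antichain of coding nodes), so that intermediate stages of the induction, whose top critical level may be a splitting node rather than a coding node, fit within the same framework. For the LSDAP$^+$ case, I will simply replace $\sim$, $\plussim$, $\wsim$ by $\Lsim$, $\Lplussim$, $\Lwsim$ throughout, invoking the Labeled Extension Property wherever the Extension Property would otherwise be used.

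The base case $k=1$ is one direct application of Theorem~\ref{thm.matrixHL} with $\tilde{A}=\emptyset$. For the inductive step $k\ge 2$, write $\tilde{C}=\tilde{A}\cup\tilde{X}$, where $\tilde{A}$ is the initial segment of $\tilde{C}$ below its top critical level and $\tilde{X}$ is the top level set (containing either a splitting or a coding node). In a fusion phase, I build a decreasing sequence $T=T_0\ge T_1\ge\cdots$ in $\mathcal{T}$ with $r_n(T_n)=r_n(T_m)$ for all $m\ge n$, so that a limit tree $S_1=\bigcup_n r_n(T_n)$ exists and lies in $\mathcal{T}$. At stage $n$, I enumerate the finitely many subtrees $A$ of $r_n(T_n)$ with $A\plussim\tilde{A}$ whose maximum lies at or below level $n$; for each such $A$, with its induced extension parameters $(B,X^*)$, I apply Theorem~\ref{thm.matrixHL} inside $[r_n(T_n),T_n]$ to stabilize the coloring on $\Ext_{T_{n+1}}(B;X^*)$. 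Composing these finitely many shrinkings yields $T_{n+1}\le T_n$, and the limit $S_1$ has the property that for each copy $A$ of $\tilde{A}$ in $S_1$, all extensions of $A$ within $S_1$ to a copy of $\tilde{C}$ receive a single color $h^*(A)$. In a recursion phase, I apply the inductive hypothesis to the coloring $h^*$ of copies of $\tilde{A}$ in $S_1$ to obtain $S\le S_1$ on which $h^*$ is constant; since every copy of $\tilde{C}$ in $S$ decomposes as $A\cup X$ with $A\plussim\tilde{A}$, the original coloring is constant on $\{C\subseteq S : C\sim\tilde{C}\}$.

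The main obstacle is executing the fusion cleanly: one must diagonalize so that each copy $A$ of $\tilde{A}$ appearing in some $r_n(T_n)$ is eventually handled, while keeping the $T_n$ compatible so that their initial segments build up to an element of $\mathcal{T}$. This is standard once one observes that at each stage only finitely many candidates $A$ exist and that Theorem~\ref{thm.matrixHL} always returns a tree in the fixed $[r_n(T_n),T_n]$-set. A secondary subtlety is identifying the correct level-set type $X^*$ for each $A$; because the language has arity at most two, Remark~\ref{rem.plussimfornoncn} guarantees that in the coding-node Case~(b), $\Ext_T(B;X^*)$ coincides with all end-extensions $X$ realizing $A\cup X\plussim\tilde{A}\cup\tilde{X}$, so no separate verification of passing types is required beyond what Theorem~\ref{thm.matrixHL} already supplies.
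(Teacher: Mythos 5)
Your proof takes essentially the same approach as the paper's: both peel off the top level of $\tilde{C}$, run a fusion argument using the Level Set Ramsey Theorem to stabilize all extensions of copies of $\tilde{A}$ to copies of $\tilde{C}$, and then recurse on $\tilde{A}$. The paper calls this a ``reverse induction on the number of levels,'' but its iterated construction of $S_1 \ge S_2 \ge \cdots \ge S_n$ is exactly the ordinary induction you propose (with the minor cosmetic difference that you re-enumerate all copies of $\tilde A$ in $r_n(T_n)$ at stage $n$ rather than only the new ones, which is redundant but harmless); your explicit remark that $\tilde{A}$ need not be the meet-closure of an antichain of coding nodes—so the statement must be proved for arbitrary finite subtrees whose top critical level may be a splitting node—is a point the paper handles implicitly by phrasing the induction in terms of trees in which coding nodes are maximal.
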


\begin{proof}
We write the proof  for \Fraisse\ limits satisfying SDAP$^+$, noting that for LSDAP$^+$, one just replaces the uses of similarity and $+$-similarity with L-similarity and L$+$-similarity, respectively. 
The proof is by reverse induction on the number of levels in 
a finite tree $\tilde{C}$ in which all coding nodes  are maximal nodes.

Suppose that $\tilde{C}$ has $n\ge 1$ levels. 
Let
$\tilde{X}$ denote $\tilde{C}\re \ell_{\tilde{C}}$, the maximum level of $\tilde{C}$.
Let $\tilde{A}$ denote $\tilde{C}\setminus\tilde{X}$; that is, $\tilde{A}$ is the initial segment of  all but the maximum level of $\tilde{C}$.
Let $m_0$ be the least integer  such that  $r_{m_0}(T)$ contains a $+$-similarity copy of $\tilde{A}$ extending to a copy of $\tilde{C}$,
 and
 let $D_0=r_{m_0}(T)$.
Let  $A^0,\dots,A^j$ list
  those   $A\sse T$ such that
  $\max(A)\sse\max(D_0)$ and
 $A$ extends to a similarity copy of $\tilde{C}$.
 For $i\le j$, let $B^i$ denote $(A^i)^+$,
 which we recall is the tree consisting of the nodes in $A^i$ along with all immediate successors of nodes in $\max(A^i)$.
 (These immediate successors are the same whether we consider them in $\bT$ or in $T$.)
 Each $B^j$ is a subtree of $(D_0)^+$.
  Apply the Level Set Ramsey Theorem
   to obtain a $T_{0}^0\in [D_0,T]$ such that $h$ is monochromatic on $\Ext_{T_0^0}(B^0;\tilde{X})$.
Repeat this process,  each time thinning the previous tree
to obtain $T_0^{i+1}\in [D_0,T_0^i]$  so that
for each $i\le j$,  $\Ext_{T_0^j}(B^i;\tilde{X})$ is monochromatic.
Let $T_0$ denote $T_0^j$.
Then for each $A^i$, $i\le j$,
every extension of $A^i$ to a similarity copy of $\tilde{A}\cup\tilde{X}$ inside $T_0$ has the same color.

Given $k<\om$ and $T_k$, let $m_{k+1}$ be the least integer greater than $m_k$
 such that $r_{m_{k+1}}(T_k)$ contains a $+$-similarity copy of $\tilde{A}$  extending to a copy of $\tilde{C}$.
 Let $D_{k+1}=r_{m_{k+1}}(T_k)$, and
 index those $A$  with $\max(A)\sse \max(D_{k+1})$
such that
 $A$ extends to a similarity copy of $\tilde{C}$
  as $A^i$, $i\le j$ for some $j$.
 Repeat the above process applying  the
 Level Set Ramsey Theorem
  finitely many times to obtain
 a $T_{k+1}\in
[D_{k+1},T_k]$  with the property that
for each  $i\le j$,
all similarity copies of  $\tilde{A}\cup\tilde{X}$ in $T_{k+1}$
 extending $A^i$
have the same color.

Since each $T_{k+1}$ is a member of $ [D_{k+1}, T_k]$,  the union $\bigcup_{k<\om} D_k$ is a member of $\mathcal{T}$, call it $S_1$.
This induces a well-defined coloring of the copies of $\tilde{A}$ in $S_1$ as follows:
Given
$A\sse S_1$ a similarity copy of $\tilde{A}$ extending to a copy of $\tilde{C}$,
let $k$ be least such that $A$ is contained in $r_{m_k}(S_1)$.
Then $\max(A)$ is contained in $\max(D_k)$,
and   $S_1\in [D_k,T_k]$ implies that
for each level set extension $X$ of $A$ in $S_1$ such that
$A\cup X\sim \tilde{A}\cup\tilde{X}$,
these similarity copies of $\tilde{C}$ have the same color.

This now induces a coloring on $+$-similarity copies of
$\tilde{A}$ inside $S_1$.
Let $\tilde{C}_{n-1}$ denote this $\tilde{A}$,
$\tilde{X}_{n-1}$ denote $\max(\tilde{C}_{n-1})$,
and
 $\tilde{A}_{n-1}$ denote $\tilde{C}_{n-1}\setminus
 \tilde{X}_{n-1}$.
Repeat the  argument in the previous
three paragraphs
to obtain $S_2\le S_1$ such that
for each $+$-similarity copy of $\tilde{A}_{n-1}$ in $S_2$,
all extensions to $+$-similarity copies of $\tilde{C}_{n-1}$ in $S_2$ have the same color.

 At the end of the reverse induction, we obtain an $S:=S_n\le T$
such that all similarity copies of $\tilde{C}$ in $S$
 have the same color.
\end{proof}


The next lemma shows that
if $\mathcal{K}$
has \Fraisse\ limit $\bK$ satisfying
\EEAP$^+$, then
within any diagonal coding tree, there is an antichain of coding nodes representing a copy of $\bK$.
(Recall Convention \ref{conv.Gamma_ts}.)

\begin{lem}\label{lem.bD}
Suppose a \Fraisse\ class $\mathcal{K}$
has \Fraisse\ limit $\bK$ satisfying
\EEAP$^+$ or LSDAP$^+$.
If $\mathcal{K}$ satisfies \SFAP\, or
 $\bK$ either has no transitive relation or  has no  unary relations, let $T$ be
 a diagonal coding subtree of $\bS(\bK)$; otherwise, let $T$ be a  diagonal
 coding subtree of $\bU(\bK)$.
Then  there is an infinite  antichain of coding nodes  $\bD\sse T$  so that $\bK\re \bD\cong^{\om}\bK$.
\end{lem}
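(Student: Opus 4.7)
The plan is to construct the antichain $\bD = \{d_n : n<\om\}$ by recursion on $n$, maintaining at stage $n$ the invariant that $d_0,\dots,d_{n-1}$ are pairwise incomparable coding nodes of $T$ with strictly increasing lengths and that the map $d_i \mapsto v_i$ for $i<n$ is an isomorphism from $\bK\re\{d_0,\dots,d_{n-1}\}$ onto $\bK_n$. Since $\bK \re T \cong \bK$, the coding nodes of $T$ carry all of the structure of $\bK$; what must be arranged is that our chosen sub-collection forms an antichain and realizes the enumerated structure of $\bK$.

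For the base case, choose $d_0$ to be a coding node of $T$ whose unary type matches that of $v_0$; such a node exists by Convention~\ref{conv.Gamma_ts}, since either $T\sse\bS(\bK)$ (so the unary type is automatic) or $T\sse\bU(\bK)$ and each unary relation occurs densely in $T$. For the inductive step, let $\tau_n^*$ denote the transport along $v_i \leftrightarrow d_i$ of the quantifier-free $1$-type of $v_n$ over $\bK_n$, and seek a coding node $d_n\in T$ satisfying (i) $|d_n|>|d_{n-1}|$, (ii) $d_n$ is incomparable in $T$ with each $d_i$ for $i<n$, and (iii) the $1$-type induced by $d_n$ on $\{d_0,\dots,d_{n-1}\}$ inside $\bK\re T$ equals $\tau_n^*$. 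Since $\bK\re T\cong\bK$ and $\bK$ is ultrahomogeneous, $\tau_n^*$ is a consistent $1$-type over $\bK\re\{d_0,\dots,d_{n-1}\}$; by condition~(2) of the definition of a diagonal coding subtree, every such type is realized at every sufficiently high level of $T$. Since $T$ is perfect by the Diagonal Coding Tree Property, arbitrarily high above level $|d_{n-1}|$ there are nodes of $T$ not extending any $d_i$, which together with the previous fact will yield a coding node $d_n$ realizing $\tau_n^*$ and meeting (i)--(iii).

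The main obstacle will be simultaneously arranging incomparability (ii) and the prescribed $1$-type (iii): a priori the nodes realizing $\tau_n^*$ might all lie above some $d_i$, or the nodes avoiding the $d_i$'s might not carry the required passing types. The strategy is to pick a level $\ell$ far above $|d_{n-1}|$ and find a node $t\in T\re \ell$ whose restriction to each level $|d_i|$ is a node of $T\re|d_i|$ distinct from $d_i$ realizing the passing type over $d_i$ prescribed by $\tau_n^*$; condition~(2) of Definition~\ref{defn.sct} guarantees that each such passing type is realized by some node at level $|d_i|$ that branches off the $d_i$ line below $|d_i|$, while perfectness of $T$ (from the Diagonal Coding Tree Property) allows these partial choices to be assembled into a single branch and then extended to a coding node with the correct unary type. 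In the LSDAP$^+$ case the same recursion is carried out inside the labeled diagonal coding tree, and the Labeled Extension Property ensures that the routing through splitting nodes with the correct label is always available. The resulting $\bD=\{d_n:n<\om\}$ is, by construction, an antichain of coding nodes in $T$ with $d_n\mapsto v_n$ an isomorphism of enumerated structures, and hence $\bK\re\bD\cong^{\om}\bK$.
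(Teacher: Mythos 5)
You correctly identify the crux of the matter---simultaneously arranging incomparability with the $d_i$ and the prescribed passing types---but the proposed resolution does not hold up. The claim that condition (2) of Definition \ref{defn.sct} ``guarantees that each such passing type is realized by some node at level $|d_i|$ that branches off the $d_i$ line below $|d_i|$'' is false. Condition (2) only asserts a bijection between the $1$-types appearing at level $\ell^T_j+1$ and $\bU(j+1)$; it says nothing about which nodes realize them, and in a diagonal coding tree some of the prescribed passing types at $d_i$ may be realizable \emph{only} by nodes extending $d_i$. Concretely, in the diagonal coding tree for $(\bQ,<)$ built in Lemma \ref{lem.DCTLO}, if one takes $d_0 = c^{\bT}_0$, then at level $|c^{\bT}_0|+1$ the unique node not extending $c^{\bT}_0$ carries a single fixed passing type at $c^{\bT}_0$, and every later node off the $d_0$-branch inherits that same passing type (the passing type at $d_0$ of any node is frozen once its restriction to level $|d_0|+1$ is known). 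If $\tau_1^*$ demands the other side of $v^{d_0}$, no admissible $d_1$ exists. Perfectness and the (Labeled) Extension Property do not rescue this: they give you more splitting further up, but never alter the already-determined passing type at $d_0$.

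This is exactly why the paper's proof never fixes the coding nodes of $\bD$ in advance and never treats $\bD$ as a mere subset of the pre-existing coding nodes of $T$. It instead grows, alongside the partial antichain, a full level set $X$ of ``scaffolding'' nodes whose passing types it controls at each stage. At the inductive step it first extends the relevant node of $X$ to a splitting node $s'$ of $T$, grows one immediate successor of $s'$ to become $c^{\bD}_n$, and then invokes SDAP (respectively LSDAP$^+$) to produce a node on the other immediate successor with the prescribed passing type at $c^{\bD}_n$; the remaining nodes of $X$ are extended likewise. That invocation of SDAP is the essential engine, and your proposal never calls on SDAP at all---the properties it does use (condition (2) of Definition \ref{defn.sct}, perfectness, the Extension Property) do not supply the required flexibility. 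To repair the argument one would need to adopt the level-by-level scaffolding bookkeeping and make the amalgamation step explicit.
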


\begin{proof}
We will use
$c^{\bD}_n$ to denote the $n$-th coding node in $\bD$,
and $v^{\bD}_n$ to denote the vertex in $\bK$ coded by
$c^{\bD}_n$.
The antichain $\bD$ will look almost exactly like $T$ in the following sense:
For each $n$,
the level set of $\bD$ containing the $n$-th coding node,
denoted $\bD\re |c^{\bD}_n|$,
will have exactly one more node than $T\re |c^T_n|$, and the $\prec$-preserving bijection between $T\re |c^T_n|$ and
$(\bD\re |c^{\bD}_n|)\setminus\{c^{\bD}_n\}$ will preserve passing types of the immediate successors.
(This is not necessary to the results on big Ramsey degrees, but since we can do this, we will.)
Moreover, letting $T'$ be the coding tree
obtained by
deleting the coding nodes in $\bD$ and declaring
 the node  $t$ in   $(\bD\re |c^{\bD}_n|)\setminus\{c^{\bD}_n\}$ which has  $t\wedge c^{\bD}_n$ of maximal length  to be the $n$-th coding node in $T'$,
 then $T'\sim T$.

Let  $m_n$ denote the integer such that the $n$-th coding node in $T$ is in the $m_n$-th level of $T$;
that is, $c^T_n$ is in the maximal level of $r_{m_n+1}(T)$.
To construct $\bD$, begin by
taking  the first $m_0$ levels of $\bD$ to equal those of $T$; that is, let $r_{m_0}(\bD)= r_{m_0}(T)$.
Each of these  levels contains a splitting node.
Let $X$ denote the set of immediate successors in
$\widehat{T}$  of
the maximal nodes in
$r_{m_0}(\bD)$.
By \EEAP\ (LSDAP$^+$),
whatever we choose to be $c^{\bD}_0$,
each node in $X$ can extend to a node in $T$ with  the desired passing type at  $c^{\bD}_0$.

Let $s$ denote the node in $X$ which extends to $c^T_0$.
It only remains to  find a splitting node  extending $s$
whose immediate successors
can be   extended to a coding node
$c^{\bD}_0$ (which will be terminal in $\bD$)
 and another node $z$ of length $|c^{\bD}_0|+1$
satisfying
$z(c^{\bD}_0)
\sim
c_0^+(c_0)$
($z(c^{\bD}_0)
\Lsim
c_0^+(c_0)$), so that
$z\re (\bK\re \{v^{\bD}_0\})$ is the same as the type of $c^+_0$ over $\bK\re \{v_0\}$.

To do this, we  utilize  \EEAP\ (LSDAP$^+$):
  $\bfA$ is the empty structure and
$\bfC$ is the structure
$\bK\re \{v^T_0,v^T_i\}$ for any $i>0$ such that
 $c^T_i$ extends $c^T_0$.
Extend $s$ to some splitting node $s'\in T$
long enough so that
the structure $\bK\re (T\re|s'|)$
acts as $\bfA'$ as in the set-up of (B) in \EEAP\ (and $\psi(s)=0$ in the case of LSDAP$^+$).
In (B1), we take $\bfC'$ to be a copy of $\bfC$ represented by some coding nodes $c^T_j,c^T_k$, where
 $s'\sse c^T_j\sse c^T_k$.
 In (B2), we let $\bfB=\bfA'$,
 and take $\sigma=\tau=s'$.
Let $t_0,t_1$ denote the immediate successors of $s'$ in $\widehat{T}$,
and take a coding node in $T$, which we denote $c^{\bD}_0$,  extending $t_0$.
(The vertex $v^{\bD}_0$ which
$c^{\bD}_0$
 represents is the $v''$ in (B3).)
Then by  \EEAP\ (LSDAP$^+$),
there is a coding node $c^T_m$ extending $t_1$ such that
$c^T_m(c^{\bD}_0)
\sim
c_0^+(c_0)$
($c^T_m(c^{\bD}_0)
\Lsim
c_0^+(c_0)$).
We let $y=c^T_m\re|c^{\bD}_0|$ and $z=c^T_m\re(|c^{\bD}_0|+1)$.
The passing type of $z$ at $c^{\bD}_0$ is the desired passing type.
We let $\bD\re(|c^{\bD}_0|+1)$ consist of the node
$z$ along with  extensions of the nodes in
$X\setminus \{s\}$ to the length
of $z$ so that their passing types at $c^{\bD}_0$
are as desired;
that is, the $\prec$-preserving bijection between
$T\re (|c^T_0|+1)$ and
$\bD\re (|c^{\bD}_0|+1)$
preserves passing types at $c^T_0$ and $c^{\bD}_0$, respectively.
We let
$\bD\re |c^{\bD}_0|$ equal $\{c^{\bD}_0\}\cup \bD\re |c^{\bD}_0|$.

For the general construction stage, given $\bD$ up to the level of $|c^{\bD}_n|+1$,
let
$X$ denote the level set  $\bD\re(|c^{\bD}_n|+1)$.
Extend the nodes in $X$ in the same way that the nodes in $T\re |c^T_{n+1}|$ extend the nodes in
$T\re (|c^T_{n}|+1)$.
Let
$s$ denote the node in $X$ which needs to be extended to the next coding node $c^{\bD}_{n+1}$, and repeat the argument above find a suitable  splitting node and extensions to a coding node $c^{\bD}_{n+1}$ as well as a non-coding node of the same height with the desired passing type at $c^{\bD}_{n+1}$ over $\{c^{\bD}_i:i\le n\}$.
By \EEAP\ (LSADP$^+$), the other nodes in $X$ extend to have the desired passing types.
\end{proof}

By Remark \ref{rem.plussimfornoncn}, given two antichains of coding nodes $C$ and $C'$,
it follows that
$C\sim C'$ ($C\Lsim C'$) if and only if for any $k$, the first $k$ levels of the trees induced by $C$ and $C'$, respectively, are $+$-similar (L$+$-similar).

Recalling that we may identify a subset of $\bT$ with the subtree it induces,
given an antichain of coding nodes $C\sse \bT$,
we  let $\Sim(C)$  denote the set of all antichains $C'$ of coding nodes in $\bT$ such that $C'\sim C$.
Thus, $\Sim(C)$ is a $\sim$-equivalence class, and  we call $\Sim(C)$ a {\em similarity type}.
For $S\sse \bT$, we write $\Sim_S(C)$ for the set of $C'\sse S$ such that $C'\sim C$.
Note in the case of  LSDAP$^+$, $C'\sim C$ if and only if $C'\Lsim C$; for the relations between two vertices  in $\bK$ completely determines the $\psi$ value  of the meet of the coding nodes in $\bT$ representing those two vertices.

\begin{defn}\label{defn.SimTC}
We say that $C$ {\em represents} a copy of a structure  $\bG\in\mathcal{K}$ when
$\bK\re C\cong \bG$.
Given $\bG\in\mathcal{K}$,
let  $\Sim(\bG)$ denote a set consisting of
one representative  from each
 similarity type
 $\Sim(C)$ of diagonal  antichains of coding nodes $C\sse\bT$ representing
a copy of
$\bG$.
\end{defn}

The next theorem  providing upper bounds follows immediately from  Theorem \ref{thm.onecolorpertreetype}
and  Lemma \ref{lem.bD}.

\begin{thm}[Upper Bounds]\label{thm.onecolorpertype}
Suppose $\mathcal{K}$
is a \Fraisse\ class
with relations of arity at most two and
 with Fraisse\ limit $\bK$ satisfying
\EEAP$^+$ or LSDAP$^+$.
Then for each  $\bG\in \mathcal{K}$,
the big Ramsey degree of $\bG$ in $\bK$ is bounded by the  number of similarity types of diagonal antichains of coding nodes representing $\bG$; that is,
 $$T(\bG,\bK)\le |\Sim(\bG)|.
$$
Moreover, given any finite collection  $\mathcal{G}$
 of structures in $\mathcal{K}$ and any
 coloring of all copies of   each $\bG\in\mathcal{G}$  in $\bK$ into finitely many colors,
  there is a substructure $\bJ$ of $\bK$
  such that
  $\bJ\cong^{\om}\bK$ and
  each
  $\bG\in\mathcal{G}$ takes at most  $|\Sim(\bG)|$ many colors in $\bJ$.
\end{thm}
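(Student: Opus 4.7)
The plan is to derive the upper bound as an essentially immediate consequence of Theorem \ref{thm.onecolorpertreetype} and Lemma \ref{lem.bD}, treating the two halves of the statement as successive finite iterations.

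First I would fix a diagonal coding subtree $\bT$ as dictated by Convention \ref{conv.Gamma_ts}, so that $\mathcal{T}$ is the associated space of diagonal (respectively $\mathrm{L}$-similar) coding trees. For any $\bG\in\mathcal{K}$, I would enumerate $\Sim(\bG) = \{\tilde{C}_1,\dots,\tilde{C}_{N(\bG)}\}$, where $N(\bG) = |\Sim(\bG)|$; note $N(\bG)$ is finite because, as observed after Lemma \ref{lem.bD}, each similarity type of an antichain of coding nodes representing $\bG$ is determined by finitely much data, namely the $+$-similarity ($\mathrm{L}+$-similarity) type of the induced meet-closed diagonal subtree whose terminal nodes represent the $|\bG|$ vertices of $\bG$.

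Now, given a finite coloring $\chi$ of $\binom{\bK}{\bG}$, I would transfer $\chi$ to a coloring of diagonal antichains of coding nodes in $\bT$ representing $\bG$, via Lemma \ref{lem.bD}: any antichain $C\sse\bT$ representing a copy of $\bG$ induces a substructure of $\bK$, hence receives a $\chi$-color. Setting $T_0 := \bT$ and applying Theorem \ref{thm.onecolorpertreetype} with $\tilde{C} = \tilde{C}_1$ produces $T_1 \le T_0$ on which every antichain similar ($\mathrm{L}$-similar) to $\tilde{C}_1$ has the same $\chi$-color; iterating through $i = 1,\dots,N(\bG)$ yields a decreasing chain $T_0 \ge T_1 \ge \cdots \ge T_{N(\bG)}$ such that for each $i$, all antichains in $T_{N(\bG)}$ of similarity type $\Sim(\tilde{C}_i)$ receive a single color. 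Finally, I would apply Lemma \ref{lem.bD} inside $T_{N(\bG)}$ to produce an antichain $\bD \sse T_{N(\bG)}$ with $\bK\re\bD \cong^{\om} \bK$ and set $\bJ := \bK\re\bD$. Every copy of $\bG$ in $\bJ$ is then represented by some antichain $C \sse \bD \sse T_{N(\bG)}$ of coding nodes, and $C$ falls into exactly one of the $N(\bG)$ similarity classes; hence $\chi$ takes at most $N(\bG) = |\Sim(\bG)|$ values on $\binom{\bJ}{\bG}$.

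For the ``moreover'' clause, I would simply iterate this construction through the finitely many $\bG \in \mathcal{G}$ in sequence: after handling $\bG_1$ to produce $S_1 \le \bT$, process $\bG_2$ inside $S_1$ to produce $S_2 \le S_1$, and so on; at each stage the preceding monochromaticity persists since the similarity types of antichains representing earlier $\bG_j$ restrict monotonically as we pass to smaller trees in $\mathcal{T}$. A final application of Lemma \ref{lem.bD} in the terminal tree yields the desired $\bJ\cong^{\om}\bK$. The only subtlety worth checking carefully — and what I expect to be the sole technical point — is that the correspondence between copies of $\bG$ in $\bK\re\bD$ and diagonal antichains of coding nodes in $\bD$ is faithful and surjective, so that bounding the number of similarity types genuinely bounds the number of $\chi$-colors; this follows because $\bD$ is itself an antichain of coding nodes in $\bT$, so any subset of $\bD$ coding a copy of $\bG$ is automatically a diagonal antichain of coding nodes of $\bT$, and by Remark \ref{rem.plussimfornoncn} the $+$-similarity ($\mathrm{L}+$-similarity) type of its meet-closure is determined at each finite level.
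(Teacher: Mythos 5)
Your proposal is correct and follows essentially the same route as the paper: enumerate the finitely many similarity types in $\Sim(\bG)$ (or in $\bigcup_{\bG\in\mathcal{G}}\Sim(\bG)$), iterate Theorem~\ref{thm.onecolorpertreetype} over them to obtain a coding subtree on which each similarity class is monochromatic, and then invoke Lemma~\ref{lem.bD} to extract an antichain $\bD$ with $\bK\re\bD\cong^{\om}\bK$. The only cosmetic difference from the paper is that you handle a single $\bG$ first and then the finite collection $\mathcal{G}$ by a second iteration, while the paper does the finite collection in one pass; and your invocation of Lemma~\ref{lem.bD} in the ``transfer $\chi$ to antichains'' step is superfluous (that step needs only the fact that any antichain of coding nodes induces a substructure of $\bK$), but this does not affect correctness.
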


\begin{proof}
Let $\mathcal{G}$ be a finite collection of structures in $\mathcal{K}$.
 Given any $T\in\mathcal{T}$,
 apply
Theorem  \ref{thm.onecolorpertreetype}
finitely many times to obtain a coding subtree $S\le T$ such that the coloring  takes one color on
the set
$\Sim_S(C)$,
 for each $C\in \bigcup\{\Sim(\bG):\bG\in\mathcal{G}\}$.
Then apply  Lemma \ref{lem.bD} to take an antichain of coding nodes, $\bD\sse S$, such that $\bK\re \bD\cong^{\om}\bK$.
Letting $\bJ=\bK\re\bD$, we see that
there are at most $|\Sim(\bG)|$ many colors  on the copies of $\bG$ in $\bJ$.
\end{proof}

In the next section, we will show that these bounds are exact.

\section{Simply characterized  big Ramsey degrees and  structures}\label{sec.brd}

In this section we prove
that
if a  \Fraisse\ limit $\bK$ of a \Fraisse\ class $\mathcal{K}$
with relations of arity at most two
satisfies \EEAP$^+$ or LSDAP$^+$, then we can characterize the exact big Ramsey degrees of $\bK$; furthermore, $\bK$ admits a big Ramsey structure.
We first show, in Theorem \ref{thm.persistence}, that each of the similarity types in
Theorem \ref{thm.onecolorpertype} persists, and hence these similarity types form canonical partitions.  From this, we obtain
a succinct characterization of the exact big Ramsey degrees of $\bK$. We then prove, in Theorem \ref{thm.apply}, that
canonical partitions
characterized via similarity types
satisfy a condition of Zucker
(\cite{Zucker19})
guaranteeing
the existence of big Ramsey structures.
This involves showing how
Zucker's condition, which is
phrased in terms of colorings of embeddings of a given structure, can be
met
by
canonical partitions
that are
in terms of
colorings of copies of a given structure.
The big Ramsey structure for $\bK$ thus obtained also has a simple characterization.
From these
results,
 we deduce Theorem \ref{thm.main}.

\begin{rem}
We point out that Theorem \ref{thm.persistence} also provides lower bounds for  the big Ramsey degrees in a
 \Fraisse\ limit $\bK$ of a \Fraisse\ class $\mathcal{K}$
with relations of any arity.
\end{rem}

Recall from Definition \ref{defn.cp} the notion of {\em persistence}.
We first show, in Theorem \ref{thm.persistence}, that
given $\bG\in\mathcal{K}$,
each of the similarity types in
 $\Sim(\bG)$ persists in any subcopy of $\bK$.
From this, it will follow
  that
  the big Ramsey degree
 $T(\bG,\mathcal{K})$ is exactly the cardinality of
  $\Sim(\bG)$ (Theorem \ref{thm.bounds}).
The proof of  Theorem  \ref{thm.persistence} follows the outline and many ideas of the proof of
Theorem 4.1 in
 \cite{Laflamme/Sauer/Vuksanovic06},
where Laflamme, Sauer, and Vuksanovic
proved persistence of diagonal antichains for
unrestricted
binary relational structures.

Recall that $\Gamma$ denotes  the set of all complete
$1$-types
of elements of $\bK$
over the empty
set.
For $\gamma\in\Gamma$,
we  let $\bC_\gamma$ denote the set of coding nodes $c_n$ in $\bS$
such that
$\gamma(v_n)$ holds in $\bK$, where $v_n$ is the vertex of $\bK$ represented by $c_n$;
 let $\gamma_{c_n}$ denote this  $\gamma$.
The next definition extends the notion of ``passing number preserving map'' from
Theorem 4.1 in \cite{Laflamme/Sauer/Vuksanovic06}.

\begin{defn}\label{defn.ptp}
Given two subsets $S,T\sse \bS$ with
coding nodes
$\lgl c^S_n:n<M\rgl$ and $\lgl c^T_n:n<N\rgl$, respectively, where $M\le N\le\om$,
 we say that
a map $\varphi:S\ra T$ is {\em passing type preserving (ptp)} if and only if  the following hold:
\begin{enumerate}
\item
$|s|<|t|$ implies  that $|\varphi(s)|<|\varphi(t)|$.
\item
$\varphi$ takes each coding node in $S$ to a  coding node in $T$,
and $\gamma_{\varphi(c^S_n)}=\gamma_{c^S_n}$ for each $n\le M$.
\item
$\varphi$  preserves passing types:
For any  $s\in S$  and
 $m<M$ with $|c^S_{m-1}|<|s|$,\\
$\varphi(s)(\varphi(c^S_m); \{\varphi(c^S_0),\dots \varphi(c^S_{m-1})\})
\sim
s(c^S_m;\{c^S_0,\dots, c^S_{m-1}\})$.
\end{enumerate}
\end{defn}

\begin{thm}[Persistence]\label{thm.persistence}
Let $\mathcal{K}$ be a \Fraisse\ class
and $\bK$ an enumerated \Fraisse\ structure for
$\mathcal{K}$.
Suppose that $\bK$ satisfies \EEAP$^+$ or LSDAP$^+$. Let $\bT$ be
 a diagonal coding tree representing a copy of $\bK$,
 let
$\bD\sse \bT$  be any antichain of coding nodes  representing $\bK$,
and let
 $A$ be any    antichain of coding nodes in $\bD$.
Then for any subset $D\sse\bD$ representing a copy of $\bK$,
there is a similarity copy of $A$ in $D$; that is,
$A$ persists in $D$.
\end{thm}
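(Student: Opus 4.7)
The plan is to prove Theorem \ref{thm.persistence} by induction on $n = |A|$, following the strategy of Theorem 4.1 in \cite{Laflamme/Sauer/Vuksanovic06}. The base case $n = 1$ is immediate: since $\bK\re D \cong \bK$, every $1$-type realized in $\bK$ is realized at some coding node of $D$, so any coding node of $D$ whose represented vertex has the same $1$-type as $a_0$'s vertex yields a similarity copy of $\{a_0\}$.

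For the inductive step, I would enumerate $A = \{a_0, \ldots, a_{n-1}\}$ by increasing length. Applying the inductive hypothesis to $A' = A \setminus \{a_{n-1}\}$ produces coding nodes $e_0, \ldots, e_{n-2} \in D$ forming an antichain similar to $A'$, and the bijection $a_i \mapsto e_i$ induces an isomorphism between the substructures of $\bK$ represented by $A'$ and $\{e_0, \ldots, e_{n-2}\}$. I would transport the quantifier-free $1$-type realized at $a_{n-1}$ over this substructure through the isomorphism to a $1$-type $\tau$ over the substructure represented by $\{e_0, \ldots, e_{n-2}\}$; since $\bK\re D \cong \bK$, $\tau$ is realized in $\bK\re D$, and I may take a corresponding coding node $e_{n-1} \in D$ with $|e_{n-1}| > |e_{n-2}|$. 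By construction, the $1$-types and pairwise passing types of $\{e_0, \ldots, e_{n-1}\}$ agree with those of $A$. It then remains to verify that the meet-closures in $\bT$ are similar. Since $\bT$ is diagonal, each level contains at most one critical node, so the relative order of pairwise meet levels in an antichain of coding nodes is forced by the passing-type sequence over each prefix; hence the meet-closure of $\{e_0, \ldots, e_{n-1}\}$ is similar to that of $A$. In the LSDAP$^+$ case, condition (c) of Definition \ref{defn.LDCT} guarantees that the $\psi$-label at each splitting node is determined by the pair of passing types of the coding nodes meeting at it, so these labels agree automatically.

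The main obstacle I foresee is that different realizations of $\tau$ in $\bK\re D$ may, a priori, correspond to coding nodes in $D$ with differing meet-level patterns relative to $\{e_0, \ldots, e_{n-2}\}$. I expect the resolution to combine two ingredients: first, the Extension Property (or Labeled Extension Property), which together with the diagonality of $\bT$ constrains the similarity type of an antichain of coding nodes to be determined by the represented substructure of $\bK$ and the enumeration order; second, the abundance of realizations of $\tau$ supplied by $\bK\re D \cong \bK$, which provides enough candidates in $D$ that some choice of $e_{n-1}$ produces the correct meet pattern. The overall construction parallels the level-by-level selection in \cite{Laflamme/Sauer/Vuksanovic06}, with SDAP$^+$ (or LSDAP$^+$) playing the role played there by the strongness property of subtrees of $2^{<\omega}$.
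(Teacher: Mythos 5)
Your proposal has a genuine gap at its core. The claim that ``the relative order of pairwise meet levels in an antichain of coding nodes is forced by the passing-type sequence over each prefix'' is false, and in fact it is precisely the falsity of this claim that makes big Ramsey degrees greater than one. For the rationals, for instance, there are sixteen similarity types of antichains of three coding nodes all representing the same three-element linear order with the same enumeration order: they differ only in the interleaving of meet levels and coding-node levels. So similarity is not determined by represented structure plus passing types, and your step ``hence the meet-closure of $\{e_0,\dots,e_{n-1}\}$ is similar to that of $A$'' does not follow.

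You do acknowledge this as ``the main obstacle,'' but the proposed resolution is where the real proof lives, and your sketch of it does not work. Induction on $|A|$ applies the hypothesis to $A' = A \setminus \{a_{n-1}\}$ to get \emph{some} similarity copy $\{e_0,\dots,e_{n-2}\}$ in $D$, but nothing guarantees that this particular copy can be extended: the coding node $e_{n-1}$ must meet each $e_i$ at a level in a prescribed position relative to the already-fixed critical levels of $\{e_0,\dots,e_{n-2}\}$, and the placement chosen by the inductive hypothesis may leave no room in $D$ for such an $e_{n-1}$. ``Abundance of realizations of $\tau$'' does not fix this, because once $e_0,\dots,e_{n-2}$ are pinned down, the set of coding nodes of $D$ above $e_{n-2}$ may realize $\tau$ only in the wrong meet patterns. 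What the paper actually does is a level-by-level construction of a similarity copy of the entire meet-closure $B$ of $A$ inside $\cl(D)$, recursing on the levels $\lgl b_i : i < q\rgl$ of $B$ rather than on the coding nodes of $A$, and maintaining the invariant that every node in the current top level of the approximation has a ``large'' cone in $D$ (meaning $\varphi^{-1}$ of that cone is cofinal above some node of $\bU$). Lemmas \ref{lem.lemma1} and \ref{lem.lemma2} are exactly what make this invariant preservable across splitting-node steps and coding-node steps; they are the mechanism your ``enough candidates'' remark gestures at but does not supply. Without that machinery the induction on $|A|$ cannot be carried through.
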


\begin{proof}
We shall be working under the assumption  that either (a) there is an  antichain  of coding nodes $\bD\sse\bT\sse\bS$
such that $\bK\re\bD\cong \bK$,
or
(b)
that for every  antichain  of coding nodes $\bD\sse\bT\sse \bU$
such that $\bK\re \bD\cong\bK$,
there is a subset $D$ also coding $\bK$
with
the property that for each non-terminal node $t\in D$
 and for each $\gamma\in \Gamma$,
 there is a coding node  in $D\cap\bC_\gamma$ extending $t$.
In either case, we  let   $\bD$ be an antichain of coding nodes  in $\bT$ representing a copy of $\bK$,
where $\bD$ is constructed as in Lemma \ref{lem.bD}.
Throughout, we shall use the notation $\bU$, but keep in mind that if (a) above holds, then we are working in $\bS$.

Without loss of generality, we may assume that
 $\bK\re \bD \cong^{\om}\bK$,
 by thinning $\bD$ if necessary.
Let $D\sse\bD$ be any subset
 such that $\bK\re D\cong\bK$;  let
 $\bJ$ denote $\bK\re D$.
Again, without loss of generality, we may assume that
$\bJ\cong^{\om}\bK$.
Let $\bC=\{c_n:n<\om\}$ denote the set of all coding nodes in $\bU$, and note that $D\sse\bD\sse\bC$.
Then  the map $\varphi:\bC\ra  D$ via
$\varphi(c_n)=c^{D}_n$ is passing type preserving, where $\lgl c^{D}_n:n<\om\rgl$ is the enumeration of the nodes in $D$ in order of increasing length.
(Note that in the case of LSDAP$^+$, a passing type preserving map  automatically preserves the $\psi$-value of the meets of coding nodes.)

Define
 \begin{equation}
 \overline{D}
 =\{c^{D}_n\re |c^{D}_m|: m\le n<\om\}.
 \end{equation}
 Then $\overline{D}$ is a union of level sets  (but is not meet-closed).
 We extend the map $\varphi$ to a map $\bar{\varphi}:\bU\ra\overline{D}$ as follows:
 Given $s\in \bU$, let $n$ be least such that $c_n\contains s$ and   $m$ be the integer such that
 $|s|=|c_m|$,
  and define $\bar{\varphi}(s)=\varphi(c_n)\re|\varphi(c_m)|$;
in other words, $\bar{\varphi}(s)=c^{D}_n\re|c^{D}_m|$.

\begin{lem}\label{lem.barvarphiptp}
$\bar{\varphi}$ is passing type preserving.
\end{lem}

\begin{proof}
For $s\in \bU(m)$,
let $n>m$ be  least such that $s=c_n\re |c_m|$.
Then for any
for $i<m$,
\begin{align}
\bar{\varphi}(s)(\varphi(c_i); \{\varphi(c_0),\dots,\varphi(c_{i-1})\})
&=(\varphi(c_n)\re |\varphi(c_m)|)(\varphi(c_i); \{\varphi(c_0),\dots,\varphi(c_{i-1})\})\cr
&=(c^{D}_n\re|c^{D}_m|)(c^{D}_i; \{c^{D}_0,\dots,c^{D}_{i-1}\})\cr
&=c^{D}_n(c^{D}_i; \{c^{D}_0,\dots,c^{D}_{i-1}\})\cr
&\sim c_n(c_i; \{c_0,\dots,c_{i-1}\})\cr
&= (c_n\re|c_m|)(c_i; \{c_0,\dots,c_{i-1}\})\cr
&=s(c_i; \{c_0,\dots,c_{i-1}\})
\end{align}
where the $\sim$ holds
since $\varphi:\bC\ra D$ is ptp.
Therefore, $\bar{\varphi}$ is ptp.
\end{proof}

Given a fixed subset $S\sse \bU$ and $s\in S$, we let $\widehat{s}$  denote the set of all $t\in S$ such that $t\contains s$.
The ambient set $S$ will either be
$\bU$
 or $\overline{D}$,  and will  be clear from the context.
We say that a set $X$ is {\em cofinal} in $\widehat{s}$  (or {\em cofinal} above $s$)  if and only if
for each $t\in\widehat{s}$, there is some  $u\in X$ such that $u\contains t$.
 A subset $L\sse D$ is  called {\em large} if and only if there is some $s\in\bU$ such that
 $\varphi^{-1}[L]$ is cofinal in
 $\widehat{s}$.
 We point out that since $D$ is a set of coding nodes,
 for any $L\sse D$,
 $\varphi^{-1}[L]$ is a subset of $\bC$.

\begin{lem}\label{lem.lemma1}
Let $n<\om$
and $L\sse D$
be given.
Suppose  $L=\bigcup_{i<n}L_i$ for some $L_i\sse D$.  If $L$ is large,
then there is an $i<n$ such that $L_i$ is large.
\end{lem}

\begin{proof}
Suppose not.
Since $L$ is large, there is some $t\in\bU$ such that $\varphi^{-1}[L]$ is cofinal above $t$.
Since $L_0$ is not large,
there  is some $s_0\contains t$ such that $\varphi^{-1}[L_0]\cap \widehat{s_0}=\emptyset$.
Given $i<n-1$ and $s_i$, since $L_{i+1}$ is not large, there is some $s_{i+1}\contains s_i$ such that
$\varphi^{-1}[L_{i+1}]\cap \widehat{s_{i+1}}=\emptyset$.
At the end of this recursive construction, we obtain an $s_{n-1}\in\bS$ such that for all $i<n$,
$\varphi^{-1}[L_{i}]\cap \widehat{s_{n-1}}=\emptyset$.
Hence, $\varphi^{-1}[L]\cap \widehat{s_{n-1}}=\emptyset$, contradicting that
$\varphi^{-1}[L]$ is cofinal above $t$.
\end{proof}

Thus,  any partition of a large set into finitely many pieces
contains
at least one  piece which is large.

Given a subset $I\sse\om$,
let $\bK\re I$ denote the
substructure  of $\bK$ on vertices $\{v_i:i\in I\}$.
Recalling that $\bJ$ denotes $\bK\re D$, we let  $\bJ\re I$ denote the
substructure of $\bJ$ on
vertices $\{v^{D}_i:i\in I\}$, where $v^D_i$ is the vertex  represented by the coding node $c^D_i$.
The next lemma  will be applied in two important ways.
First, it will
aid in  finding  splitting nodes  in the meet-closure of $D$ (denoted by $\cl(D)$)
as  needed   to construct a similarity copy of a given antichain of coding nodes   $A$ inside $D$.
 Second,
 it
 will guarantee that we can find nodes in $D$ which have the needed passing types in order to continue building a similarity copy of $A$ in
 $D$.

Given a subset $L\sse\overline{D}$,
we say that $L$ is {\em large}
exactly when $L\cap D$ is large.
 Note that since $\varphi$ has range $D$,
 $\varphi^{-1}[L]$ is always a subset of $\bC$.
Given  a finite set
$I\sse\om$
and $1$-types $\sigma,\tau$
over $\bJ\re I$ and $\bK\re I$, respectively,
 we write $\sigma\sim\tau$ exactly when
 for each $i\in I$,
 $\sigma(v^D_i; \bJ\re I_i)\sim
 \tau(v_i;\bK\re I_i)$,
 where $I_i=\{j\in I:j<i\}$.

\begin{lem}\label{lem.lemma2}
Suppose  $t$ is in $\overline{D}$
 and     $\widehat{t}$  is   large.
 Let $s_*\in\bU$ be such that
 $\varphi^{-1}[\,\widehat{t}\,]$ is cofinal in $\widehat{s_*}$.
 Let  $i$ be    the index such that
$|s_*|=|c_{i}|$,  and
let  $I\sse i$,
 $n\ge i$,
 $I'=I\cup\{n\}$, and
$\ell=|c^{D}_n|$ be given.
For any
complete
 $1$-type $\sigma$ over
 $\bJ\re I'$ such that $\sigma\re (\bJ\re I)\sim s_*\re (\bK\re  I)$,
let
\begin{equation}
L_{\sigma}=
\bigcup\{\widehat{u}:u\in \widehat{t}\re \ell\mathrm{\ and\ }
u\re (\bJ\re I')\sim\sigma\}.
\end{equation}
Then
$L_{\sigma}$ is large.
\end{lem}

\begin{proof}
Fix an $s\contains s_*$ with $|s|>|c_n|$ such that
 $s\re(\bK\re I')\sim \sigma$ holds.
 Suppose towards a contradiction that
$L_{\sigma}$ is not large, and
fix  an extension  $s'\contains s$ such that
$\varphi^{-1}[L_{\sigma}]
\cap \widehat{s'}=\emptyset$.
Since $\varphi^{-1}[\,\widehat{t}\,]$ is cofinal in $\widehat{s_*}$,
there is a coding node $c_j$
in  $\varphi^{-1}[\,\widehat{t}\,]
$
extending $s'$.
Notice that $c_j$ being in $\varphi^{-1}[\,\widehat{t}\,]$ implies that $\varphi(c_j)$ extends $t$.
Moreover,
since
$c_j$ extends $s$
and  $\varphi$ is passing type preserving,
it follows that
$\varphi(c_j)\re (\bJ\re I')\sim\sigma$.
Thus,
$\varphi(c_j)$
 is in $L_{\sigma}$
 and hence,
$c_j$ is in $\varphi^{-1}[L_{\sigma}]$.
But  then
$c_j\in \varphi^{-1}[L_{\sigma}]
\cap \widehat{s'}$, a contradiction.
\end{proof}

For the remainder of the proof,
fix  a diagonal  antichain of coding nodes $A\sse\bD$.
Let $\lgl c^A_i:i<p\rgl$  enumerate the  nodes in $A$ in order of increasing length, where $p\le \om$, noting that  each $c^A_i$ is a coding node.
For each $i<p$,
let $\gamma_i$ denote $\gamma_{c^A_i}$.

Let $B$ denote the meet-closure of $A$;
 label the nodes of $B$ as $\lgl b_i:i<q\rgl$ in increasing order of length, where $q\le \om$.
Thus, each node in  $B$ is either a member of $A$ (hence, a coding node) or else a splitting node of degree two which is the meet of two  nodes in $A$.
Our goal is to build a similarity copy of $B$ inside the meet-closure of $D$, denoted $\cl(D)$;
that is, we aim to build a  similarity map $f$  from $B$ into $\cl(D)$ so that  $f[B]\sim B$.
Now the map
$\varphi$ is already passing type preserving.
The challenge is to get a $\prec$- and meet- preserving map which is still passing type preserving from  $B$ into $\cl(D)$.

First notice that  $B\re 1=A\re 1$.
If we are working in $\bS$, then $B\re 1$ is a subset of
$D\re 1=\bS(0)=\Gamma$ with  possibly more than one node.
If we are working in $\bU$, then $B\re 1$ is the singleton $D\re 1$.
Without loss of generality, we may assume
that $|c^A_0|>1$.
Let
$f_{-1}$ be the  empty map,
let
$T_{-1}$ denote $B\re 1$,
 let $N_{-1}=1$, and
let $\psi_{-1}$ be the identity map on
$T_{-1}$.
Let $\widehat{D}$ be the tree induced by cl$(D)$.
Let $M_{-1}=1$, and for each $k<q$, let
$M_k=|b_{k-1}|+1$, where we make the convention $|b_{-1}|=0$.

For each $k<q$
we will recursively define meet-closed sets $T_k\sse\widehat{D}$, maps $f_k$ and $\psi_k$, and $N_k<\om$ such that the following hold:
\begin{enumerate}
\item
$f_k$ is a $+$similarity embedding of $\{b_i:i<k\}$ into $T_k$.
\item
$|t|\le N_k$ for all $t\in T_k$.

\item
All maximal nodes of $T_k$ are either in $T_k\re N_k$, or else in the range of $f_k$.

\item
$\theta_k$ is a $\prec$ and passing type preserving  bijection of
$B\re M_k$ to $T_k\re N_k$.

\item
$T_{k-1}\sse T_k$, $f_{k-1}\sse f_k$, and $N_{k-1}<N_k$.
\end{enumerate}

The idea behind $T_k$ is that it will contain a similarity  image of $\{b_i:i<k\}\cup (B\re M_k)$,
the nodes in the image of $ B\re M_k$ being the ones we need to continue extending in order to build a similarity copy of $B$ in $\cl(D)$.
(In the case of LSDAP$^+$,  we further stipulate in (1) that $f_k$ is an L$+$-similarity embedding.)

Assume  now that  $k<q$, and (1)--(6) hold for all $k'<k$.
We have two cases.
\vskip.1in

\noindent\bf Case I. \rm
$b_k$ is a splitting node.
\vskip.1in

Let $i<j<p$ be such that  $b_k=c^A_i\wedge c^A_j$.
Let $t_k=\theta_k( b_k\re M_k)$,
recalling that by (5),
 $t_k$ is a member of $T_k\re N_k$.
By (3), $\widehat{t_k}$ is large,
so we can fix  a coding node $c_{n}\in \varphi^{-1}[\widehat{t_k}]$.
Then
 $c^{D}_n=\varphi(c_n)\contains t_k$.
Let $N_{k+1}=|c^{D}_{n+1}|$.

Our goal is to find two incomparable nodes which extend $t_k$
and have cones which are
 large.
Recalling that $N_k=|t_k|$,
 let
 \begin{equation}
 I=\{i<\om: |c_i^{D}|< N_k\},
 \end{equation}
 and let $I'=I\cup\{n\}$.
Let $\sigma$ and $\tau$ be distinct $1$-types over
 $\bJ( I')$
 such that
 both
  $\sigma\re  \bJ( I)$
  and
    $\tau\re \bJ( I)$
 equal  $t_k\re  \bJ( I)$.
For each    $\mu\in\{\sigma,\tau\}$,
let
\begin{equation}
L_{\mu}=\bigcup\{\widehat{u}:u\in\widehat{t_k}\re N_{k+1}\mathrm{\  and \ }
u(c_n^{D}; \bJ( I))=\mu\}.
\end{equation}
By Lemma \ref{lem.lemma2},
both
$L_\sigma$ and  $L_\tau$ are large.
It then follows from Lemma \ref{lem.lemma1} that
there are
 $t_{\sigma},t_{\tau}\in\widehat{t_k}\re N_{k+1}$
such that
$t_{\sigma}\in L_\sigma$ and
$t_{\tau}\in L_\tau$,
and both
$\widehat{t_{\sigma}}$ and $\widehat{t_{\tau}}$  are
large.
Since $\sigma\ne\tau$, it follows that $t_\sigma\ne t_\tau$.
Hence, $t_\sigma$ and $t_\tau$ are incomparable, since they have the same length, $N_{k+1}$.
Since
both $t_\sigma\contains t_k$ and $t_\tau\contains t_k$,
we have $t_\sigma\wedge t_\tau\contains t_k$.

As $(B\cap \widehat{b_k})\re M_k$ has size exactly two,
define $\theta_{k+1}$
on  $(B\cap \widehat{b_k})\re M_{k+1}$
to be the unique $\prec$-preserving map
onto $\{t_\sigma,t_\tau\}$.
Let $E_k$ denote $(B\setminus\widehat{b_k})\re M_k$.
For $s\in E_k$, choose some
$t_s\in\widehat{\theta_k(s)}\re N_{k+1}$ such that
$\widehat{t_s}$ is large.
This is possible by Lemma \ref{lem.lemma1}, since
$\bigcup\{\widehat{t}:t\in \widehat{\theta_k(s)}\re N_{k+1}\}$
is large.
Every $s\in E_k$ has a unique extension  $s'\in B\re M_{k+1}$.
Define $\theta_{k+1}(s')=t_s$.
Let $f_{k+1}$ be the extension of $f_k$ which sends
$b_k$ to $t_\sigma\wedge t_\tau$,
 and let
\begin{equation}
T_{k+1}=T_k\cup \{t_\sigma,t_\tau, t_\sigma\wedge t_\tau\}\cup
\{t_s:s\in E_k\}.
\end{equation}

(In the case of LSDAP$^+$, if $\psi(b_k)=m$ then we take $\sigma$ and $\tau$ above so that the pair $\{\sigma\re (\bK\re \{v^D_n\}), \tau\re (\bK\re \{v^D_n\})  \}$ corresponds to $\psi$-value $m$ in  the Labeled Extension Property.)

This completes Case I.
\vskip.1in

\noindent\bf Case II. \rm
$b_k$ is a coding  node.
\vskip.1in

In this case, $b_k=c^A_j$ for some $j<p$.
By the Induction Hypothesis,
for each $t\in T_k\re N_k$,
$\widehat{t}$ is large;
so we can choose  some $s_t\in\bU$ such that $\varphi^{-1}[\,\widehat{t}\,]$ is  cofinal above $s_t$.
Fix
$t_*=\theta_k(b_k\re  M_k)\in T_k\re N_k$.
Choose a coding node $c_n\contains s_{t_*}$ in $\bU$ such that $|c_n| >\max\{|s_t|:t\in T_k\re N_k\}$
and $\gamma_{c_n}=\gamma_j$, the $\gamma\in\Gamma$ which the vertex $v^A_j$ satisfies.
(In the case that $\bD\sse\bS$, this $\gamma_j$ is
already guaranteed  since $c_n\contains s_{t_*}\contains \gamma_j$.
If $\bD\sse\bU$, there are cofinally many coding nodes extending $s_{t_*}$ which satisfy $\gamma_j$.)
Let
 $d_k$ denote $c^{D}_n=\varphi(c_n)$,
 noting that $\gamma_{d_k}=\gamma_j$.
 Extend $f_k$ by defining
 $f_{k+1}(b_k)=d_k$,
 and
let $N_{k+1}=|c^D_{n+1}|$.
If $q<\om$ and $k=q-1$, we are done.
Otherwise,
we must extend the other members of
$(T_k\re N_k)\setminus\{t_*\}$  to nodes in
$\widehat{D}\re N_{k+1}$ so as to satisfy (1)--(6).

For each  $i\in\{k,k+1\}$,
let
$E_i=(B\re M_i)\setminus \{b_i\re M_i\}$.
Fix an   $s\in E_k$
and let $t=\theta_k(s)$, which is a node in $T_k\re N_k$.
Note that there is a unique $s'\in  E_{k+1}$
such that $s'\contains s$.
Let $A\rl j$ denote $\{c^A_i:i\le j\}$,
$\sigma$ denote $s'\re (A\rl j)$, and
$f_k[A\rl j]$ denote $\{f_k(c^A_i):i<j\}$.
Let
 $I=\{i<\om:c^{D}_{i}\in
f_k[A\rl j]\}$.
Our goal is to find a $t'\contains t$ with $|t'|>|d_k|$
such that $t'(d_k;\bJ(I))\sim \sigma$.

Take  $c_m$ to be any coding node in $\bS$  extending
$s$
such that
$|c_m|>|c_n|$ and $c_m(c_n;A\rl j)\sim\sigma$.
Such a  $c_m$  exists by  \EEAP.
Then  $ \varphi(c_m) (d_k; \bJ(I))\sim \sigma$,
 since $\varphi$ is passing type preserving.
By Lemma \ref{lem.lemma2},
\begin{equation}
L_\sigma:=
\bigcup\{\widehat{u}:u\in\widehat{t}\re N_{k+1}\mathrm{\ and\ }
u(d_k;f_k[A\rl j])
\sim
\sigma\}
\end{equation}
is
large.
Thus, by Lemma \ref{lem.lemma1},
there is some $u_s\in\widehat{t}\re N_{k+1}$ such that $\widehat{u}_s$ is large.
Define $\theta_{k+1}(s)=u_s$.
This builds
\begin{equation}
T_{k+1}=
T_k\cup\{d_k\}\cup\{\psi_{k+1}(s):s\in E_k\}
\end{equation}
and concludes the construction in Case II.
\vskip.1in

Finally, let $f=\bigcup_k f_k$.
Then $f$ is a similarity map from $B$ to $f[B]$,
and thus,
 the antichain of coding nodes in  $f[A]$ is similar to $A$.
Therefore, all similarity types of  diagonal  antichains of coding nodes  persist in $\bJ$.
\end{proof}

As the antichain in the previous theorem can be infinite, we immediately obtain the following corollary.

\begin{cor}\label{cor.presbD}
Suppose $\bK$
satisfies \EEAP$^+$ or LSDAP$^+$.
Given
$D$ a subset of $\bD$ which represents a copy of $\bK$,
there is a subset $D'$ of $D$ such that $D'\sim \bD$.
\end{cor}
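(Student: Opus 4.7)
The plan is to apply Theorem \ref{thm.persistence} directly, taking the antichain $A$ in that theorem to be $\bD$ itself. Trivially $\bD \subseteq \bD$ and $\bD$ is a diagonal antichain of coding nodes (it was constructed as such in Lemma \ref{lem.bD}), so it qualifies as a choice of $A$. Given $D \sse \bD$ representing a copy of $\bK$, Theorem \ref{thm.persistence} then produces a similarity copy of $A = \bD$ sitting inside $D$. Taking $D'$ to be (the set of coding nodes of) that similarity copy gives $D' \sse D$ with $D' \sim \bD$, which is exactly what the corollary asks for.

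The only point requiring verification is that the cited persistence argument really does apply to infinite $A$, not just finite ones. Inspecting the statement of Theorem \ref{thm.persistence}, no finiteness assumption is placed on $A$; and inspecting its proof, the antichain $A$ is enumerated as $\lgl c^A_i : i<p\rgl$ with $p\le\om$, while the meet-closure $B$ is enumerated as $\lgl b_k : k<q\rgl$ with $q\le\om$. The recursive construction indexed over $k<q$ goes through uniformly whether $q$ is finite or $q = \om$: at each stage Lemmas \ref{lem.lemma1} and \ref{lem.lemma2} are invoked, and the union $f = \bigcup_k f_k$ is defined over all of $k<q$. Hence for $A = \bD$ (so $q = \om$) the construction still yields a similarity embedding $f : B \to \cl(D)$, and $f[A]$ is the desired subset $D'$ of $D$.

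In short, there is no real work beyond noticing that Theorem \ref{thm.persistence} applies verbatim with $A=\bD$; this is precisely the content of the author's one-line remark preceding the corollary, and I would expect no separate obstacle to arise.
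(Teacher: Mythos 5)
Your proposal is correct and is exactly the paper's argument: the sentence preceding the corollary ("As the antichain in the previous theorem can be infinite, we immediately obtain the following corollary") is precisely the observation you make, namely that Theorem~\ref{thm.persistence} applies verbatim with $A=\bD$ since the recursive construction over $k<q$ works uniformly for $q=\om$. No gap, no difference in approach.
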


Combining the previous
results,
we obtain canonical partitions  which
are simply described by similarity types.

\begin{thm}[Simply characterized big Ramsey degrees]\label{thm.bounds}
Let
$\bK$ be an enumerated \Fraisse\ structure for a
 \Fraisse\ class $\mathcal{K}$
with relations of arity  at most two
 such that $\bK$ satisfies
 \EEAP$^+$ or LSDAP$^+$.
Given  $\bG\in\mathcal{K}$,
the partition $\{\Sim(C):C\in\Sim(\bG)\}$ is
 a canonical partition of
 the copies of $\bG$ in $\bK$.
 It follows that
the big Ramsey degree $T(\bG,\bK)$
equals the  number of similarity types of antichains of coding nodes in $\bT$ representing $\bG$.
That is,
$$
T(\bG,\bK)=|\Sim(\bG)|.
$$
\end{thm}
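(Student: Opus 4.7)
The plan is to derive the theorem immediately by assembling two ingredients already available: the upper-bound machinery of Theorem \ref{thm.onecolorpertype} (which in turn rests on Theorem \ref{thm.onecolorpertreetype}) and the persistence statement of Theorem \ref{thm.persistence}. The two clauses of Definition \ref{defn.cp} line up exactly with these, so once the bookkeeping is set up I expect no further combinatorial argument.

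First I would fix a diagonal coding tree $\bT$ for $\bK$ and, via Lemma \ref{lem.bD}, an antichain of coding nodes $\bD\sse\bT$ with $\bK\re\bD\cong^{\om}\bK$. Identifying $\bK$ with $\bK\re\bD$, each copy of $\bG$ in $\bK$ is represented by a unique antichain of coding nodes in $\bD$, whose similarity class is one of those enumerated by $\Sim(\bG)$. This realizes $\{\Sim(C):C\in\Sim(\bG)\}$ as a partition of $\binom{\bK}{\bG}$. The only mild subtlety here is checking that this partition is well-defined independently of the choice of identification $\bK\cong\bK\re\bD$, but this is immediate from the invariance of similarity types under the natural bijection.

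Next I would verify the two defining properties of a canonical partition. Persistence (clause (1)) follows directly from Theorem \ref{thm.persistence}: any subcopy $\bJ\sse\bK$ corresponds to a subset $D\sse\bD$ with $\bK\re D\cong\bK$, and the theorem produces a similarity copy of each $C\in\Sim(\bG)$ inside $D$, hence a copy of $\bG$ in $\bJ$ lying in the class $\Sim(C)$. For clause (2), given a finite coloring of $\binom{\bK}{\bG}$, I would apply Theorem \ref{thm.onecolorpertreetype} once for each representative $C\in\Sim(\bG)$ to obtain a coding subtree $S\le\bT$ on which every class $\Sim_S(C)$ is monochromatic, then use Lemma \ref{lem.bD} inside $S$ to extract a subcopy $\bJ\sse\bK$ witnessing clause (2).

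Finally, I would deduce the identity $T(\bG,\bK)=|\Sim(\bG)|$. The upper bound $T(\bG,\bK)\le|\Sim(\bG)|$ is Theorem \ref{thm.onecolorpertype}. For the matching lower bound I would color each copy of $\bG$ in $\bK$ by its similarity type, giving a coloring $\gamma:\binom{\bK}{\bG}\to\Sim(\bG)$ with exactly $|\Sim(\bG)|$ colors; persistence then guarantees that every subcopy of $\bK$ realizes all of these colors, so no fewer than $|\Sim(\bG)|$ suffice. I do not anticipate a genuinely hard step here: all of the combinatorial work has been done in Theorems \ref{thm.onecolorpertype} and \ref{thm.persistence}, and this statement is essentially their joint corollary.
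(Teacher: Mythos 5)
Your proposal is correct and mirrors the paper's own proof: both assemble the result directly from Theorem \ref{thm.onecolorpertype} (via Theorem \ref{thm.onecolorpertreetype} and Lemma \ref{lem.bD}) for clause (2) of Definition \ref{defn.cp} and Theorem \ref{thm.persistence} for clause (1), then read off the identity $T(\bG,\bK)=|\Sim(\bG)|$. The only differences are cosmetic — you treat the two clauses in the opposite order and spell out the well-definedness of the partition and the lower-bound coloring a bit more explicitly — but the argument is the same.
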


\begin{proof}
Let   $\bG\in\mathcal{K}$ be given, and suppose $h$ is a coloring of all copies of $\bG$ in $\bK$ into finitely many colors.
By Theorem
\ref{thm.onecolorpertype},
there is an antichain of coding nodes $\bD\sse\bT$ which codes a copy of $\bK$, and moreover, for each $C\in\Sim(\bG)$,
$h$ is constant on $\Sim_{\bD}(C)$.
Let $\bJ=\bK\re\bD$.

Given any
subcopy
$\bJ'$ of $\bJ$,
Theorem   \ref{thm.persistence}
implies that $\Sim_D(C)\ne\emptyset$ for each $C\in\Sim(\bG)$,
where $D=\bS\re \bJ'$.
Thus,
$\{\Sim(C):C\in\Sim(\bG)\}$ is
 a canonical partition of
 the copies of $\bG$ in $\bK$.
It follows that
$T(\bG,\mathcal{K})= |\Sim(\bG)|$.
 \end{proof}

We now  apply
Theorem \ref{thm.bounds}
to show that \Fraisse\ structures
with \EEAP$^+$ or LSDAP$^+$
satisfy the conditions of Zucker's Theorem 7.1 in \cite{Zucker19},  yielding
 Theorem \ref{thm.main}.
Zucker used colorings of embeddings
rather than colorings of copies throughout \cite{Zucker19}.
Our task now is to translate
Theorem \ref{thm.bounds},
which uses
colorings of copies of a given structure, into the setting of \cite{Zucker19}.
To do so, we need to review the following  notions from
\cite{Zucker19}.

Let $\bK$ be an enumerated \Fraisse\ structure for a \Fraisse\ class $\mathcal{K}$.
An {\em exhaustion} of $\bK$ is a sequence
$\{\bfA_n:n<\om\}$ with each $\bfA_n\in\mathcal{K}$, $\bfA_n\sse\bfA_{n+1}\sse\bK$,  such that $\bK=\bigcup_{n<\om}\bfA_n$.
Given $m\le n$, write $H_m:=\Emb(\bfA_m,\bK)$ and $H^n_m:=\Emb(\bfA_m,\bfA_n)$.
For $f\in H^n_m$, the function $\hat{f}: H_n\ra H_m$ is defined by $\hat{f}(s)=s\circ f$, for each $s\in H_n$.
(Here we are using Zucker's notation, so $s$ is denoting an embedding rather than a node in $\bU$.)

The following  terminology is  found in
Definition 4.2  in \cite{Zucker19}.
A set $S\sse H_m$ is {\em unavoidable} if for each embedding $\eta:\bK\ra\bK$, we have $\eta^{-1}(S)\ne\emptyset$.
Fix $k\le r<\om$ and let
 $\gamma: H_m\ra r$  be a coloring.
 We call
 $\gamma$  an
{\em unavoidable $k$-coloring}  if
the image of $\gamma$, written $\mathrm{Im}(\gamma)$, has cardinality $k$,
and for each $i<r$,
we have $\gamma^{-1}(\{i\})\sse H_m$ is either empty
or unavoidable.
Thus, an unavoidable coloring is essentially the same concept as persistence, with the addition that attention is also given to the embedding.

The following is taken from Definition 4.7 in \cite{Zucker19}:
Let $\gamma$ and $\delta$ be colorings of $H_m$.
We say that $\delta$ {\em refines} $\gamma$ and write $\gamma\le\delta$ if whenever $f_0,f_1\in H_m$ and $\delta(f_0)=\delta(f_1)$,
then $\gamma(f_0)=\gamma(f_1)$.
For $m\le n<\om$, $\gamma$ a coloring of $H_m$, and $\delta$ a coloring of $H_n$,
we say that $\delta$ {\em strongly refines} $\gamma$ and write $\gamma\ll\delta$ if for every $f\in H^n_m$, we have that $\gamma\circ \hat{f}\le \delta$.

Theorem 7.1 in \cite{Zucker19}, which we state next,  provides conditions for showing that a \Fraisse\ limit admits a big Ramsey structure.

\begin{thm}[Zucker,  \cite{Zucker19}]\label{thm.Zucker7.1}
Let $\bK=\bigcup_{n<\om}\bfA_n$ be a \Fraisse\ structure,
where $\{ \bA_n : n < \om\}$ is an exhaustion of $\bK$,
and suppose each $\bfA_n$ has
finite big Ramsey degree
$R_n$
in $\bK$.
Assume that for each $m<\om$, there is an unavoidable $R_m$-coloring $\gamma_m$ of $H_m$ so that
$\gamma_m\ll\gamma_n$ for each $m\le n<\om$.
Then $\bK$ admits a big Ramsey structure.
\end{thm}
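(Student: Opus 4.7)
The plan is to build the expansion $\bK^*$ directly from the coherent sequence $(\gamma_m)_{m<\om}$ of unavoidable colorings. Define the expanded language $\mathcal{L}^* = \mathcal{L} \cup \{R_{m,c} : m < \om,\ c \in \mathrm{Im}(\gamma_m)\}$, where each $R_{m,c}$ has arity $k_m := |\bfA_m|$; after fixing an enumeration $(a^m_1, \ldots, a^m_{k_m})$ of each $\bfA_m$, interpret $R_{m,c}$ in $\bK^*$ by declaring $R_{m,c}^{\bK^*}(v_1, \ldots, v_{k_m})$ to hold iff there exists $\iota \in H_m$ with $\iota(a^m_i) = v_i$ for each $i$ and $\gamma_m(\iota) = c$. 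Since $\gamma_m$ is a function, this is unambiguous, and by construction the reduct of $\bK^*$ to $\mathcal{L}$ is $\bK$, giving clause (1) of Definition \ref{defn.bRs}.

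\textbf{Clauses (2) and (3).} Clause (2) is immediate from finiteness of $\mathrm{Im}(\gamma_n)$: for any $\bfA \in \mathcal{K}$ isomorphic to some $\bfA_n$, the possible $\mathcal{L}^*$-expansions of $\bfA$ in $\text{Age}(\bK^*)$ are determined by the color $\gamma_n(\iota)$ of some embedding $\iota$ together with an enumeration, yielding at most $R_n \cdot |\text{Aut}(\bfA_n)|$ many expansions. For clause (3), this upper bound is matched by showing that different colors yield genuinely distinct pullback expansions: since distinct $c, c'$ correspond to disjoint predicates $R_{m,c}, R_{m,c'}$, the pullback structures differ on the corresponding tuples. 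The strong refinement $\gamma_m \ll \gamma_n$ is what guarantees that this count is globally coherent across all levels, so that the $\bK^*$ defined above really is a single $\mathcal{L}^*$-structure whose restriction to the image of any $\iota \in H_m$ agrees with what $\gamma_m$ prescribes on the substructures appearing inside the image of any $\iota' \in H_n$ extending $\iota$.

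\textbf{The main clause.} Clause (4) is the heart of the theorem, and this is where unavoidability is used in an essential way. Fix any subcopy $\bK'$ of $\bK$ realized by an embedding $\eta \in \Emb(\bK,\bK)$, and fix $\bfA \cong \bfA_m \in \mathcal{K}$. The claim is that the map $\iota \mapsto \bK^* \cdot \iota$ restricted to $\Emb(\bfA, \bK')$ hits all $|\bK^*(\bfA)| = R_m \cdot |\text{Aut}(\bfA_m)|$ expansions. Unavoidability of each color class gives $\eta^{-1}(\gamma_m^{-1}(\{c\})) \neq \emptyset$ for every $c \in \mathrm{Im}(\gamma_m)$, so every color is realized by some embedding whose image lies in $\bK'$; composing with the $\text{Aut}(\bfA_m)$-action on enumerations produces the full complement of pullback expansions. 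The principal obstacle is to correctly disentangle, in the verification of clauses (3) and (4), the interplay of $\text{Aut}(\bfA_m)$-orbits, enumerations of finite universes, and $\gamma_m$-color classes. The strong refinement $\gamma_m \ll \gamma_n$ must be invoked carefully to ensure that the local data at each level glue into a single coherent $\mathcal{L}^*$-structure on $\bK$ whose pullbacks over every subcopy realize all expansions simultaneously.
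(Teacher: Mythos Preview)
This theorem is not proved in the paper at all; it is quoted as Theorem 7.1 from Zucker \cite{Zucker19} and used as a black box. The paper's own contribution is the subsequent Theorem \ref{thm.apply}, which verifies Zucker's hypotheses for the structures under consideration. So there is no ``paper's proof'' to compare against.

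Your sketch does follow the outline of Zucker's actual argument, but as written it has real gaps. The most serious is in your verification of clauses (2) and (3). You assert that an expansion of $\bfA\cong\bfA_n$ in $\mathrm{Age}(\bK^*)$ is determined by a single color $\gamma_n(\iota)$ together with an enumeration. But the pullback $\bK^*\cdot\iota$ carries not just the $R_{n,c}$-relations; it records $R_{m,c'}$ on \emph{every} $k_m$-tuple from the image of $\iota$, for every $m$ with $k_m\le k_n$. You must prove that the top-level color $\gamma_n(\iota)$ determines all of these lower-level colors; this is exactly what the strong refinement $\gamma_m\ll\gamma_n$ gives, since for each $f\in H^n_m$ the coloring $\gamma_m\circ\hat f$ factors through $\gamma_n$. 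You mention that strong refinement provides ``coherence'' but never actually invoke it to prove this determination, and without it your finiteness and counting claims are unjustified. Second, your count of $R_n\cdot|\mathrm{Aut}(\bfA_n)|$ expansions needs care: in Zucker's framework (and in the paper's application in Theorem \ref{thm.apply}) the $R_n$ appearing in the hypothesis is the \emph{embedding} big Ramsey degree, so clause (3) demands $|\bK^*(\bfA_n)|=R_n$, and the injectivity of $c\mapsto(\text{pullback expansion})$ that you sketch must be combined with the determination argument above to get equality rather than just an inequality. Finally, for clause (4) you need every color class to meet the image of every self-embedding $\eta$; unavoidability gives $\eta^{-1}(\gamma_m^{-1}\{c\})\ne\emptyset$, but you should also note that the resulting embedding into $\bK'$ then realizes the corresponding expansion, which again requires the determination argument to know that the pullback is controlled by $c$ alone.
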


Now we show how to translate our results so as to apply Theorem \ref{thm.Zucker7.1}.
Given an enumerated \Fraisse\ structure $\bK$,  we point out that  $\{\bK_n:n<\om\}$ is an exhaustion of $\bK$.
Theorem \ref{thm.bounds} shows that
 $\bK_n$ has finite big Ramsey degree
 $T(\bK_n,\bK)=|\Sim(\bK_n)|$ for colorings of {\em copies} of $\bK_n$ in  $\bK$.
Recalling Remark \ref{rem.embvscopy},
 the big Ramsey degree for {\em embeddings} of $\bK_n$ into $\bK$ is
 $T(\bK_n,\bK)\cdot|\mathrm{Aut}(\bK_n)|$.

\begin{thm}\label{thm.apply}
Suppose $\mathcal{K}$ is a
\Fraisse\ class  with
\Fraisse\ limit $\bK$ and with
canonical partitions characterized via diagonal antichains of coding nodes in a coding tree of $1$-types.
 Then the conditions of Theorem \ref{thm.Zucker7.1} are satisfied.
\end{thm}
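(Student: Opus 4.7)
My plan is to define, for each $m<\om$, a coloring $\gamma_m$ on $H_m=\Emb(\bK_m,\bK)$ whose value on $f\in H_m$ records two pieces of information:
(i) the similarity type $\Sim(C_f)\in\Sim(\bK_m)$ of the antichain of coding nodes in $\bT$ representing the copy $f[\bK_m]\sse\bK$ (using the canonical partition supplied by Theorem~\ref{thm.bounds});
(ii) the automorphism $\tau_f:=\iota_f^{-1}\circ f\in\Aut(\bK_m)$, where $\iota_f:\bK_m\to f[\bK_m]$ is the unique isomorphism that preserves the enumerations inherited from $\bK$.
Since there are exactly $|\Sim(\bK_m)|=T(\bK_m,\bK)$ similarity types and $|\Aut(\bK_m)|$ internal automorphisms, the image of $\gamma_m$ has size $R_m=T(\bK_m,\bK)\cdot|\Aut(\bK_m)|$.

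Next I would verify unavoidability. Given any embedding $\eta:\bK\ra\bK$, the image $\eta[\bK]$ is a subcopy of $\bK$. Theorem~\ref{thm.persistence} (equivalently, the persistence clause in the canonical partition of Theorem~\ref{thm.bounds}) asserts that every similarity type $\Sim(C)\in\Sim(\bK_m)$ is represented inside this subcopy. For each such copy $\bfN$ realized in $\eta[\bK]$ and each $\tau\in\Aut(\bK_m)$, the map $\iota_{\bfN}\circ\tau$ is an embedding of $\bK_m$ into $\bK$ whose image is $\bfN$ and whose internal automorphism is exactly $\tau$; composing with $\eta$-preimage exhibits a preimage in $\eta^{-1}$ of every color actually attained by $\gamma_m$. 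Thus $\gamma_m$ is an unavoidable $R_m$-coloring.

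The main obstacle is the strong refinement condition: for every $m\le n$ and every $f\in H^n_m=\Emb(\bK_m,\bK_n)$, I must show that $\gamma_m\circ\hat f\le\gamma_n$. Suppose $h_0,h_1\in H_n$ satisfy $\gamma_n(h_0)=\gamma_n(h_1)$; write $\bfN_i=h_i[\bK_n]$ and $h_i=\iota_i\circ\sigma$ with common $\sigma\in\Aut(\bK_n)$. The key observation is that the canonical bijection $\bfN_0\to\bfN_1$ induced by the similarity of their antichains of coding nodes agrees with the enumeration-preserving bijection (coding nodes are listed by increasing length, which matches the $\bK$-ordering); hence, letting $J:=(\sigma\circ f)[\bK_m]\sse\bK_n$, the images $\bfM_i:=(h_i\circ f)[\bK_m]=\iota_i[J]$ correspond under this similarity. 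Consequently the antichains representing $\bfM_0$ and $\bfM_1$ are similar sub-antichains of similar antichains, so $\Sim(C_{\bfM_0})=\Sim(C_{\bfM_1})$. For the automorphism component, the ordering-preserving isomorphisms $\iota_i':\bK_m\to\bfM_i$ factor as $\iota_i'=\iota_i\circ\phi$ for the common ordering-preserving isomorphism $\phi:\bK_m\to J$, giving $\tau_{h_i\circ f}=\phi^{-1}\circ\sigma\circ f$, independent of $i$. Hence $\gamma_m(h_0\circ f)=\gamma_m(h_1\circ f)$, as required.

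The delicate point, and the part I expect to require the most care in the write-up, is the compatibility assertion that restrictions of $+$-similar antichains to corresponding sub-antichains remain $+$-similar; this uses the fact that similarity maps on diagonal antichains of coding nodes in $\bT$ are automatically enumeration-preserving (they send the $k$-th coding node to the $k$-th coding node) and preserve the meet and passing-type structure, so all invariants needed to compute $\Sim(C_{\bfM_i})$ are inherited from those of $\Sim(C_{\bfN_i})$. With these verifications, the hypotheses of Theorem~\ref{thm.Zucker7.1} hold and $\bK$ admits a big Ramsey structure.
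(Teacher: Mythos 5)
Your strategy is essentially the paper's: color an embedding $f\in H_m$ by the pair (similarity type of the induced antichain, permutation of $m$ recording how $f$ scrambles the enumeration), then get strong refinement from the observation that a similarity map between diagonal antichains sends the $k$-th coding node to the $k$-th coding node, hence restricts to a similarity between corresponding sub-antichains and is simultaneously the enumeration-preserving bijection. The factorization $h_i=\iota_i\circ\sigma$ and the reduction to the common set $J$ mirror the paper's computation with $p_{\hat f(s)}$, and your ``delicate point'' (restrictions of similar antichains to index-corresponding sub-antichains remain similar) is the same step the paper invokes when it concludes $A'\sim B'$ from $A\sim B$ and $p_s\circ f=p_t\circ f$.

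There is, however, a flaw in the second coordinate as you define it. You set $\tau_f:=\iota_f^{-1}\circ f$ and assert $\tau_f\in\mathrm{Aut}(\bK_m)$, where $\iota_f:\bK_m\to f[\bK_m]$ is ``the unique isomorphism that preserves the enumerations inherited from $\bK$.'' But the enumeration-preserving bijection need not be an isomorphism at all. For instance, if $\bK$ is the Rado graph, $\bK_3=\{0,1,2\}$ has its only edge on $\{0,1\}$, and $f$ sends $0,1,2$ to $a_2,a_0,a_1$ respectively (increasing order $a_0<a_1<a_2$), then $f[\bK_3]$ has its only edge on $\{a_0,a_2\}$, while the increasing bijection carries $\{0,1\}$ to $\{a_0,a_1\}$; so $\iota_f$ is not an isomorphism and $\tau_f$ is not an automorphism. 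This also undermines your unavoidability step, which precomposes $\iota_{\bfN}$ (taken to be an isomorphism) with elements of $\mathrm{Aut}(\bK_m)$ to hit every color. The paper avoids this by recording the plain permutation $p_s$ (defined by $s(j)=\mathrm{a}_{p_s(j)}$) and separately noting, via the sets $P_\ell$, that for a fixed similarity type exactly $|\mathrm{Aut}(\bK_n)|$ permutations occur, which yields the image size $R_n$. Your proof goes through once $\tau_f$ is replaced by this permutation, together with the observation that for each similarity type the set of arising permutations is a fixed coset-like set of size $|\mathrm{Aut}(\bK_m)|$; but as written the claim $\tau_f\in\mathrm{Aut}(\bK_m)$, and the image-count and unavoidability arguments that rely on it, are incorrect.
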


\begin{proof}
Recalling that $\bD$ denotes the diagonal antichain of coding nodes constructed in Lemma \ref{lem.bD},
we shall abuse notation and use
$\bK$  to denote the  structure $\bK\re \bD$.
Thus, the universe of $\bK$ will (without loss of generality) be $\om$,
and embeddings  $s$ of initial segments $\bK_n$ into $\bK$ will produce diagonal antichains
$\bD\re s[\bK_n]\sse \bD$.
Given $n<\om$,
let $T_n:=T(\bK_n,\bK)$, and
let $\lgl C^n_0,\dots, C^n_{T_n-1}\rgl$ be an enumeration of $\Sim(\bK_n)$, a set of representatives of the similarity types of diagonal antichains of coding nodes representing a copy of $\bK_n$.
Let Aut$(\bK_n)$ denote the set of automorphisms of $\bK_n$.

As $\bK_n$ has vertex set $n=\{0,\dots,n-1\}$,
its vertex set is linearly ordered.
Given $s\in H_n$,
let $\bfA:=s[\bK_n]$, with vertex set
$\lgl \mathrm{a}_0,\dots, \mathrm{a}_{n-1}\rgl$ written in increasing order as a subset of $\om$.
Let $p_s$ denote  the permutation of $n$
defined by $s(j)=\mathrm{a}_{p_s(j)}$, for $j<n$.
Given  $\ell<T_n$,
let $\bfC^n_\ell$ denote the structure $\bK\re C^n_\ell$,
and
 let  $\lgl v^\ell_0,\dots, v^\ell_{n-1}\rgl$  denote the vertex set  of $\bfC^n_\ell$
  in increasing order as a subset of $\om$.
  Let $P_\ell$
 be the set of permutations $p$ of $n$ such that
 the map $j \mapsto v^\ell_{p(j)}$, $j<n$,
 induces an isomorphism from $\bK_n$ to $\bfC^n_\ell$.
Note that   $|P_\ell|=|$Aut$(\bK_n)|$.

Letting $R_n=T(\bK_n,\bK)\cdot|\mathrm{Aut}(\bK_n)|$, we define an unavoidable coloring $\gamma_n:H_n\ra R_n$ as follows:
For $s\in H_n$,
define $\gamma_n(s)=\lgl t,p_s\rgl$,
where
$t<T_n$ is the index satisfying
$\bD\re \bfB_s\sim C^n_t$.
Then $\gamma_n$ is an unavoidable coloring, by Theorem \ref{thm.persistence}.

Let $m\le n<\om$.
To show that $\gamma_m\ll \gamma_n$, we start by fixing  $f\in H^n_m$ and $s,t\in H_n$ such that
$\gamma_n(s)=\gamma_n(t)$.
Note that $f:\bK_m\ra\bK_n$ is completely determined by
its behavior on the sets of vertices.
Thus, we equate $f$ with its induced injection from $m$ into $n$.
Let $\bfA,\bfB$ denote the structures $s[\bK_n],t[\bK_n]$, respectively.
Let  $A=\bD\re\bfA$ and $B=\bD\re \bfB$, the diagonal antichains of coding nodes representing the structures $\bfA,\bfB$, respectively.
Since $\gamma_n(s)=\gamma_n(t)$, it follows that
$A\sim B$ and $p_s=p_t$.
It follows that $p_s\circ f=p_t\circ f$.

Our task is to show that $\gamma_m(\hat{f}(s))=\gamma_m(\hat{f}(t))$.
Letting $\lgl \mathrm{a}_0,\dots,\mathrm{a}_{n-1}\rgl$ denote the increasing enumeration of the vertices in $\bfA$,
we see that $s\circ f$ is an injection from $m$ into
$\{\mathrm{a}_j:j<n\}$.
Letting $\bar{m}=\{j<n:\exists i<m\, (\mathrm{a}_j=s\circ f(i))\}$,
and letting $\mu$ be the strictly increasing injection from $\bar{m}$ into $m$,
we see that
$p_{\hat{f}(s)}$ is the permutation of $m$ given by
$p_{\hat{f}(s)}(i)=
\mu\circ f\circ p_s(i)$.
Likewise,  $t\circ f$ is an injection from $m$ into
$\{\mathrm{b}_j:j<n\}$, where
$\lgl \mathrm{b}_0,\dots,\mathrm{b}_{n-1}\rgl$ denotes the increasing enumeration of the vertices in $\bfB$.
Since $p_s=p_t$,
we see that $f\circ p_s=f\circ  p_t$,
and hence, the set of indices
$\{j<n:\exists i<m\, (\mathrm{b}_j=t\circ f(i))\}$
equals $\bar{m}$.
Thus, $p_{\hat{f}(t)}(i)=
\mu\circ f\circ p_t(i)$ for each $i<m$.
Hence,
$p_{\hat{f}(s)}=p_{\hat{f}(t)}$.

$\hat{f}\circ s$ maps $\bK_m$ to the substructure $\bfA'$ of  $\bfA$ on vertices $\{\mathrm{a}_{p_s\circ f(i)}:i<m\}$.
This substructure induces the antichain of coding nodes $A':=\{c^A_{p_s\circ f (i)}:i<m\}\sse A$;
that is, $A'=A\re \bfA'$.
Similarly,
$t\circ \hat{f}$ maps $\bK_m$ to the  substructure
$\bfB'$ of  $\bfB$ on vertices $\{\mathrm{b}_{p_t\circ f(i)}:i<m\}$;
this induces  the antichain of coding nodes  $B':=B\re\bfB'=\{c^B_{ p_s\circ f(i)}:i<m\}\sse B$.
Since $p_s=p_t$,
we have $p_s\circ f= p_t\circ f$, and since $A\sim B$, it follows that $A'\sim B'$.
Let $\ell<T_m$ be the index such that $A'\sim B'\sim C^m_{\ell}$.
Then   $\gamma_m(\hat{f}(s))=(\ell,p_{\hat{f}(s)})=\gamma_m(\hat{f}(t))$, since
$p_{\hat{f}(s)}=p_{\hat{f}(t)}$.
Therefore, $\gamma_m\ll \gamma_n$.
\end{proof}

\begin{rem}
We point out that Theorem \ref{thm.apply} holds for \Fraisse\ classes  with relations  of any arity.
It is not hard   to check that it applies to  the ternary betweenness relation. 
However, it is likely that most \Fraisse\ classes with non-trivial relations of arity at least three   will not satisfy the hypothesis of that theorem.
\end{rem}

For languages with relations of arity at most two,
the big Ramsey structure  of a  \Fraisse\
limit $\bK $
with \EEAP$^+$ 
or LSDAP$^+$
is obtained simply  by  expanding the language $\mathcal{L}$ of $\bK$ to the language
$\mathcal{L}^*=\mathcal{L}\cup\{\triangleleft,\mathscr{Q}\}$,
where $\triangleleft$ and $\mathscr{Q}$ are not in $\mathcal{L}$, $\triangleleft$ is a binary relation symbol, and $\mathscr{Q}$ is a quaternary relation symbol.
In fact, by
Theorem \ref{thm.apply},
this will be the case for any \Fraisse\ class with canonical partitions characterized via diagonal antichains of coding nodes in a  coding tree of $1$-types.
The big Ramsey $\mathcal{L}^*$-structure $\bK^*$ for $\bK$ is described as follows.

Let $\bD$ be the diagonal antichain of coding nodes from the proof of Theorem \ref{thm.apply}, and
recall the linear order $\prec$ on $\bS$ induced in the natural way from a linear order of the relation symbols in the language (see Subsection 3.2 of Part I for a detailed description).
Note that $(\bD,\prec)$
is isomorphic to the rationals as a linear order.
Following Zucker
in Section 6 of \cite{Zucker19},
let $R$ be the quaternary relation on $\bD$ given by:
For
$p\preccurlyeq q\preccurlyeq r\preccurlyeq s\in \bD$,
set
\begin{equation}
R(p,q,r,s)\Longleftrightarrow |p\wedge q|\le |r\wedge s|,
\end{equation}
where $p\preccurlyeq q$ means either $p\prec q$ or $p=q$.
Without loss of generality, we may
use $\bK$ to denote $\bK\re \bD$.
Define $\bK^*$
be
the expansion of $\bK$ to the
language
$\mathcal{L}^*$ in which
$\triangleleft$ is interpreted as $\prec$ and $\mathscr{Q}$ is interpreted as $R$.
Then we have the following.

\begin{thm}\label{thm.BRS}
Let $\mathcal{K}$ be a \Fraisse\ class in language $\mathcal{L}$
with relation symbols of arity at most two
and $\bK$ a \Fraisse\ limit of $\mathcal{K}$.
Suppose that $\bK$
satisfies
\EEAP$^+$ or LSDAP$^+$,
 and let   $\mathcal{L}^*=\mathcal{L}\cup\{\triangleleft,\mathscr{Q}\}$, where
 $\triangleleft$ is a binary relation symbol and $\mathscr{Q}$ is a quaternary relation symbol.
Then  the  $\mathcal{L}^*$-structure
$\bK^*$
is a big Ramsey structure for $\bK$.
\end{thm}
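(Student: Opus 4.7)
The plan is to verify the four conditions of Definition \ref{defn.bRs} for the concrete $\mathcal{L}^*$-structure $\bK^*$ defined on $\bD$ via the linear order $\prec$ and the quaternary meet-height relation $R$. The existence of \emph{some} big Ramsey structure for $\bK$ follows at once from Theorem \ref{thm.apply} combined with Zucker's Theorem \ref{thm.Zucker7.1}; what remains is to identify our concrete $\bK^*$ as one. I will take the unavoidable colorings $\gamma_n$ of Theorem \ref{thm.apply} as a template: each $\gamma_n$ assigns to an embedding the pair (similarity type, permutation), which is exactly the information that the reduct $\bK^*\cdot\iota$ records through $(\triangleleft,\mathscr{Q})$.

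Conditions (1) and (2) are immediate. Condition (1) holds by the very definition of $\bK^*$, and condition (2) holds because $\mathcal{L}^*\setminus\mathcal{L}=\{\triangleleft,\mathscr{Q}\}$ contributes only finitely many possible relations on the finite underlying set of $\bfA$. For condition (3), I would set up the following dictionary: specifying an expansion $\bfA^{*}\in\bK^*(\bfA)$ is equivalent to specifying a linear order $\triangleleft$ on the underlying set of $\bfA$ together with a quaternary relation $\mathscr{Q}$ that could arise as $R$ restricted to (and pulled back along) the increasing bijection from $\bfA$ to some diagonal antichain of coding nodes representing the ordered structure $(\bfA,\triangleleft)$. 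Since the meet-height comparisons recorded by $\mathscr{Q}$, read along $\triangleleft$, reconstruct the tree of meets (and, via the $\mathcal{L}$-reduct, the $\psi$-labels in the LSDAP$^+$ case, as noted after Definition \ref{defn.SimTC}), the expansions of $(\bfA,\triangleleft)$ correspond bijectively to diagonal trees coding $(\bfA,\triangleleft)$ in the sense of the Simple Characterization. Using rigidity of linear orders on finite sets, every isomorphism class of $(\bfA,<)$ is represented by exactly $|\text{Aut}(\bfA)|$-many linear orders on the underlying set, so
\[
|\bK^*(\bfA)|=\sum_{<}\mathcal{D}(\bfA,<)=|\text{Aut}(\bfA)|\cdot\sum_{(\bfA,<)\in\mathcal{OA}}\mathcal{D}(\bfA,<)=|\text{Aut}(\bfA)|\cdot T(\bfA,\bK),
\]
invoking Theorem \ref{thm.bounds} at the final equality.

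For condition (4), fix a subcopy $\bK'$ of $\bK$ and an expansion $\bfA^{*}\in\bK^*(\bfA)$. Using the identification $\bK=\bK\re\bD$, the subcopy $\bK'$ corresponds to some subset $D\subseteq\bD$ representing a copy of $\bK$, and the expansion $\bfA^{*}$ encodes, via the dictionary of condition (3), a specific similarity type of diagonal antichain representing $(\bfA,\triangleleft_{\bfA^{*}})$. By Theorem \ref{thm.persistence} every similarity type of diagonal antichain persists in $D$, so one can locate a diagonal antichain $C\subseteq D$ of the required similarity type; the unique $\triangleleft$-to-$\prec$ increasing bijection $\iota\colon\bfA\to C$ is then an embedding into $\bK'$ with $\bK^*\cdot\iota=\bfA^{*}$. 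The main obstacle is the bookkeeping in condition (3): one must carefully verify that the relational data $(\prec,R)$ restricted to a finite antichain, together with the induced $\mathcal{L}$-structure, records precisely what is needed to reconstruct the similarity type (splitting pattern, coding-node levels, and passing types at the relevant levels, and in the LSDAP$^+$ case the $\psi$-labels). Because the language has only unary and binary relation symbols, passing types at splitting levels are determined by the $\mathcal{L}$-relations of vertices extending the splitting node, so this bookkeeping succeeds; once it is in place, Theorem \ref{thm.persistence} does the real work for (4) and Theorem \ref{thm.bounds} does it for (3).
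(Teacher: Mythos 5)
Your proposal is correct and matches the paper's approach: the paper's one-line proof simply defers to Theorems \ref{thm.bounds} and \ref{thm.apply} (whose unavoidable colorings $\gamma_n$ carry exactly the information recorded by $\bK^*\cdot\iota$), and you are unpacking, clause by clause, the verification of Definition \ref{defn.bRs} that the paper leaves to the reader, with Theorem \ref{thm.persistence} supplying condition (4) and the canonical-partition dictionary supplying condition (3), just as the paper intends. One small streamlining: your condition-(3) count passes through the Simple Characterization (summing $\mathcal{D}(\bfA,<)$ over linear orders), which as stated in the introduction is only exact for languages without unary symbols; it is cleaner to invoke Theorem \ref{thm.bounds} directly, writing $|\bK^*(\bfA)|=|\Sim(\bfA)|\cdot|\mathrm{Aut}(\bfA)|=T(\bfA,\bK)\cdot|\mathrm{Aut}(\bfA)|$ by pairing each expansion with a similarity-type representative together with an isomorphism from $\bfA$ onto the $\mathcal{L}$-structure it codes, which avoids the unary caveat entirely.
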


\begin{proof}
Theorems
\ref{thm.bounds} and
 \ref{thm.apply}
  imply the existence of a big Ramsey structure for $\bK$.
  Moreover, the proof of  Theorem
 \ref{thm.apply} shows that $\bK^*$ satisfies
  Definition \ref{defn.bRs}
  of a big Ramsey structure.
\end{proof}


We
 now
can quickly deduce Theorem \ref{thm.SESAPimpliesORP}
below:
The ordered expansion of the age of any \Fraisse\ structure
 with relations of arity at most two
satisfying \EEAP$^+$ is a Ramsey class.
This theorem offers  a new approach
 for  proving that
 such
\Fraisse\ classes have
ordered expansions which are Ramsey,
 complementing the much more general,
famous partite construction method of \Nesetril\ and \Rodl\ (see \cite{Nesetril/Rodl77} and \cite{Nesetril/Rodl83}) which is  at the heart  of   finite  structural  Ramsey  theory.

For the rest of this section, we work only with
\Fraisse\ classes in a finite relational  language $\mathcal{L}$
with relation symbols of arity at most two.
Let  $<$ be an additional binary relation symbol not in $\mathcal{L}$, and
let $\mathcal{L}'=\mathcal{L}\cup\{<\}$.
Let $\mathcal{K}^{<}$ denote
the class of all ordered expansions of structures in $\mathcal{K}$, namely,
the  collection of
all
$\mathcal{L}'$-structures in which
$<$ is interpreted as a linear order and whose reducts to the language $\mathcal{L}$  are members of $\mathcal{K}$.
Since  $\mathcal{K}$ has disjoint amalgamation by assumption, $\mathcal{K}^<$ will be a \Fraisse\ class
with disjoint amalgamation.
 We denote the \Fraisse\ limit of $\mathcal{K}^<$ by $\bK^<$, and note that $\bK^<$ is universal for all countable
 $\mathcal{L}'$-structures in which the relation symbol $<$ is interpreted as a linear order.
 We shall write
$\bfM' := \lgl \bfM,<'\rgl$
for any $\mathcal{L}'$-structure interpreting $<$  as a linear order; it will be understood that $\bfM$ is an $\mathcal{L}$-structure and that  $<'$ is
   the linear order on
   ${\rm M}$ interpreting $<$.

\begin{defn}\label{defn.comb}
Given   a \Fraisse\ class $\mathcal{K}$ and an enumerated \Fraisse\ structure $\bK$,
let $\bU$ be the unary-colored coding tree of $1$-types for $\bK$.
We call  a finite  antichain $C$ of coding nodes in $\bU$
 a {\em  comb} if and only if
for any two coding nodes $c,c'$ in $C$,
\begin{equation}
|c|<|c'|\ \Longleftrightarrow   \   c\prec c',
\end{equation}
where $\prec$ is the lexicographic order on $T$.
\end{defn}

\begin{thm}\label{thm.SESAPimpliesORP}
Let $\mathcal{K}$ be a \Fraisse\ class in a finite relational language $\mathcal{L}$
with relation symbols of arity at most two, and suppose that the \Fraisse\ limit of
$\mathcal{K}$ has \EEAP$^+$.
Then the ordered expansion $\mathcal{K}^<$ of $\mathcal{K}$ has the Ramsey property.
 \end{thm}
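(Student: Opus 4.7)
The plan is to deduce the finite Ramsey property for $\mathcal{K}^<$ from the big Ramsey machinery via combs (Definition \ref{defn.comb}) in the coding tree $\bT$, followed by a standard compactness argument.

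First I will interpret the lexicographic order $\prec$ on $\bT$, restricted to coding nodes, as a binary relation $\triangleleft$ on the vertices of $\bK$, producing an $\mathcal{L}'$-structure $(\bK,\triangleleft)$. Using the Extension Property of \EEAP$^+$ (and Convention \ref{conv.Gamma_ts} for unary predicates), every ordered structure $\bfC' \in \mathcal{K}^<$ can be realized as a comb antichain in $\bT$ whose $\prec$-order matches the $<$-order on $\bfC'$, so $(\bK,\triangleleft)$ is universal for $\mathcal{K}^<$. Consequently, a routine K\"onig's lemma compactness argument applied over an exhaustion $\bfC_0 \le \bfC_1 \le \cdots$ of $(\bK,\triangleleft)$ (to the finitely branching tree of $\ell$-colorings of $\binom{\bfC_n}{\bfA'}$ with no monochromatic $\bfB'$-copy) reduces the theorem to showing $(\bK,\triangleleft) \ra (\bfB')^{\bfA'}_\ell$ for every $\bfA' \le \bfB'$ in $\mathcal{K}^<$ and every $\ell \ge 2$.

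The key observation is that for each $\bfA' \in \mathcal{K}^<$ there is exactly one $+$-similarity type $C_{\bfA'}$ of comb antichain in $\bT$ representing $\bfA'$: the tree shape is forced to be the caterpillar on $|\bfA'|$ coding nodes, and the passing types of each coding node at all previous coding nodes are uniquely determined by the relations of $\bfA'$ read in $<$-order. Given an $\ell$-coloring $\chi \colon \binom{(\bK,\triangleleft)}{\bfA'} \ra \ell$, Theorem \ref{thm.onecolorpertreetype} applied to the restriction of $\chi$ to $\Sim_\bT(C_{\bfA'})$ yields some $S \le \bT$ in $\mathcal{T}$ on which $\chi$ is constant on $\Sim_S(C_{\bfA'})$. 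By persistence (Theorem \ref{thm.persistence}) applied to the antichain of coding nodes of $S$, we can fix a $\bfB'$-comb $B \sse S$. Any ordered $\bfA'$-subcopy of $B$ is a subset of a comb and is again a comb (since $|c_i| < |c_j| \Longleftrightarrow c_i \prec c_j$ is preserved by subsequences), and its induced ordered $\mathcal{L}'$-structure is $\bfA'$; hence such a subcopy lies in $\Sim_S(C_{\bfA'})$ and receives the constant $\chi$-color. Thus $B$ is a $\bfB'$-copy in $(\bK,\triangleleft)$ all of whose $\bfA'$-subcopies are $\chi$-monochromatic, establishing $(\bK,\triangleleft) \ra (\bfB')^{\bfA'}_\ell$.

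The main obstacle I anticipate is the clean verification that the comb similarity type $C_{\bfA'}$ is uniquely determined by $\bfA'$, and symmetrically that every ordered $\bfA'$-subcopy of a $\bfB'$-comb is itself a comb of type $C_{\bfA'}$. Both boil down to a careful inspection of $+$-similarity (Definition \ref{def.plussim}) together with the handling of unary predicates under Convention \ref{conv.Gamma_ts}, but no new ideas beyond those already developed in Part I should be required.
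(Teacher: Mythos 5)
Your overall strategy is the same as the paper's — homogenize a comb similarity type via Theorem \ref{thm.onecolorpertreetype}, then pull out a $\bfB'$-comb via persistence (Theorem \ref{thm.persistence}) and note that every $\bfA'$-copy inside a comb is again a comb. There are two cosmetic differences: you work with the lexicographic order $\triangleleft$ induced by $\prec$, which requires a separate universality argument, whereas the paper simply uses the enumeration order $\langle\bK,\in\rangle$, whose age is $\mathcal{K}^<$ for free; and you make the compactness step explicit, whereas the paper absorbs it into the equivalence stated in equation (2.1). Neither difference is a problem.

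There is, however, a genuine gap in the step you yourself flag as the anticipated obstacle. The claim that ``for each $\bfA'\in\mathcal{K}^<$ there is exactly one $+$-similarity type $C_{\bfA'}$ of comb antichain in $\bT$ representing $\bfA'$'' is false. The comb condition $|c|<|c'|\Leftrightarrow c\prec c'$ fixes neither the tree topology nor the interleaving of critical-node levels. Already for three coding nodes $c_0\prec c_1\prec c_2$ with $|c_0|<|c_1|<|c_2|$, both meet patterns $c_0\wedge c_1=c_0\wedge c_2\subsetneq c_1\wedge c_2$ (first split separates $c_0$) and $c_0\wedge c_2=c_1\wedge c_2\subsetneq c_0\wedge c_1$ (first split separates $c_2$) occur inside any perfect diagonal coding tree, each consistent with the required length order; these are not similar. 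Even within a single topology, the relative order of splitting and coding levels can vary and produces further distinct similarity types. Consequently your inference ``hence such a subcopy lies in $\Sim_S(C_{\bfA'})$'' does not follow: a $\bfB'$-comb $B\subseteq S$ may contain $\bfA'$-subcombs of a type you never homogenized, and these may carry a different color.

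The fix is to abandon uniqueness and instead make a \emph{compatible} choice: fix once and for all a comb shape for $\bfB'$ (for instance the ``all splitting levels below all coding levels'' shape) whose $\bfA'$-subcombs all have one and the same similarity type $\tilde A$, take $\tilde A$ as the input to Theorem \ref{thm.onecolorpertreetype}, and then ask persistence for a $\bfB'$-comb $B^*$ of exactly that fixed shape. With that amendment, every $\bfA'$-subcopy of $B^*$ is in $\Sim_{S}(\tilde A)$ and the argument closes. This is what the paper does implicitly with its chosen $A$ and $B^*$.
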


\begin{proof}
Let $\bK$ be any enumerated \Fraisse\ limit of $\mathcal{K}$.
Then
$\bK$ has universe $\om$, and may be
regarded  as a linearly ordered structure in order-type $\om$,
that is, as an $\mathcal{L}'$-structure $\lgl \bK,\in\rgl$
in which the relation symbol $<$ is interpreted as the order inherited from $\om$.
Let $\bU$ be the coding tree of $1$-types associated with $\bK$.

Let $\bfA',\bfB'$ be members of $\mathcal{K}^{<}$ such that
$\bfA'$ embeds into $\bfB'$.
Fix  a finite coloring $f$  of all
copies of
$\bfA'$ in $\lgl \bK,\in\rgl$.
Note that in this context,
a substructure $\lgl \bfA^*,\in\rgl$ of $\lgl \bK,\in\rgl$
is a
copy
of $\bfA'$
 when there is an
 $\mathcal{L}'$-isomorphism between $\lgl \bfA, <' \rgl$ and $\lgl \bfA^*, \in \rgl$.

Let $\bT$ be a diagonal coding subtree of $\bU$, and let $A\sse \bT$ be a comb representing $\bfA'$.
Thus,  if $\lgl c^A_i:i<m\rgl$ is the enumeration of $A$ in order of increasing length,
then  the coding node  $c^A_i$ represents the  $i$-th vertex  of $\bfA'$
(according to its linear ordering $<'$).
Let $f^*$ be the coloring  on $\Sim(A)$ induced by $f$.
By Theorem \ref{thm.onecolorpertreetype},
there is a diagonal coding subtree $T\sse \bT$ in which all
similarity copies of $A$ have the same $f^*$ color.

Let $D\sse T$ be an antichain of coding nodes
 representing  a copy of $\bK$.
(This is guaranteed by
Lemma \ref{lem.bD}.)
By Theorem \ref{thm.persistence},
there is a subset $B^*\sse  D$ such that $B^*$ is a comb representing a copy of  $\bfB'$ in the order inherited on the coding nodes in $B^*$.
Then every copy of $\bfA'$ represented by a set of coding nodes in $B^*$
is represented by a comb, and hence
 has the same $f$-color.
 Since
 $\lgl \bK,\in\rgl$
 is an $\mathcal{L}'$-structure interpreting the relation symbol $<$ as a linear order,  $\lgl \bK,\in\rgl$
 embeds into the \Fraisse\ limit of $\mathcal{K}^{<}$, and so
 it follows from
 Definition
 \ref{defn.RP}
that $\mathcal{K}^{<}$ has the Ramsey property.
\end{proof}

\begin{rem}
It is impossible for any comb   to represent a copy of
a
\Fraisse\ structure $\bK$
satisfying \EEAP$^+$
when
$\bK$ has at least one non-trivial relation
of arity at least two.
The contrast  between similarity types of diagonal  antichains of $1$-types persisting in every copy of $\bK$
in a coding tree
 and  combs (or any other fixed similarity type)
 being sufficient
 to prove
 the Ramsey property for the ordered expansion of its age
lies at the heart of the difference between big Ramsey degrees for $\bK$ and the Ramsey property for $\mathcal{K}^<$.
\end{rem}

In the paper \cite{Hubicka/Nesetril19},
\Hubicka\ and \Nesetril\ prove general theorems
 from
which the majority of  Ramsey classes can be deduced.
In particular,
Corollary 4.2 of \cite{Hubicka/Nesetril19} implies that every relational \Fraisse\ class with  free amalgamation  has
an ordered expansion with the Ramsey property.
So  for \Fraisse\ classes satisfying \SFAP,
 Theorem \ref{thm.SESAPimpliesORP} provides a new proof of special case of  a known result.
However, we  are not aware  of a  prior  result implying
Theorem \ref{thm.SESAPimpliesORP}
in its full generality.

A different approach to recovering the  ordered Ramsey property is given in  \cite{Hubicka_CS20}.
In that paper, \Hubicka's  results on big Ramsey degrees via the Ramsey theory  of   parameter spaces
recover
a special case of
the \Nesetril-\Rodl\ theorem \cite{Nesetril/Rodl77},
that the class of finite  ordered triangle-free graphs has the Ramsey property.

These approaches to proving the Ramsey property for  ordered \Fraisse\ classes may seem at first glance very different from the partite construction method.
However,
the  methods  must be  related at some fundamental level, similarly to the relationship between the  Halpern-\Lauchli\  and
Hales-Jewett theorems.
It will be interesting to see  if this could lead to new  Hales-Jewett theorems corresponding to the various forcing constructions
(in \cite{DobrinenJML20}, \cite{DobrinenH_k19}, \cite{Zucker19}, and this paper)
which have been used to determine finite and exact big Ramsey degrees.


\section{Examples of \Fraisse\ structures satisfying \texorpdfstring{\EEAP$^+$}{EEAP+} or \texorpdfstring{LSDAP$^+$}{LSDAP+}} \label{sec.EEAPClasses}

We now  investigate  \Fraisse\  classes
which  have \Fraisse\ structures satisfying  \EEAP$^+$ or LSDAP$^+$.
Such
classes
seem to  fall roughly into three categories:  Free amalgamation classes
of relational structures
in which
 any forbidden substructures  are $3$-irreducible
 (Definition \ref{defn.3irred}), and their ordered expansions;
 disjoint amalgamation classes 
which are
 unrestricted
 (Definition \ref{defn.unconst}), and  their ordered expansions;
 and
  disjoint amalgamation classes which are
 in some sense
 ``$\bQ$-like''.
At the end of this
section,
we provide
 a  catalogue of  \Fraisse\
 structures
 which have been investigated for indivisibility or for  big Ramsey degrees.
 The list is
 non-exhaustive, as research is ongoing, but it provides a view  of many of the main results currently known, including the new results from Parts I and II.

 First, we consider free amalgamation classes.
 The following definition appears in \cite{Conant17}, and occurs implicitly in work on indivisibility in \cite{El-Zahar/Sauer94}.

 \begin{defn}\label{defn.3irred}
Let $r \ge 2$, and let $\mathcal{L}$ be a finite relational language.  An $\mathcal{L}$-structure $\bF$ is {\em $r$-irreducible} if for any $r$ distinct elements $a_0, \ldots , a_{r-1}$ in $\mathrm{F}$ there is some $R \in \mathcal{L}$ and $k$-tuple $\bar{p}$ with entries from $\mathrm{F}$, where
$k \ge r$
is the arity of $R$, such that each $a_i$, $i < r$, is among the entries of $\bar{p}$, and $R^\bF(\bar{p})$ holds.  We  say $\bF$ is {\em irreducible} when $\bF$ is 2-irreducible.
 \end{defn}

Note that for $r > \ell \ge 2$, a structure that is $r$-irreducible need not be $\ell$-irreducible.  This is because for any structure $\mathbf{F}$
such that $|\mathrm{F}|< r$, it is vacuously the case that $\mathbf{F}$ is $r$-irreducible, but if $|\mathrm{F}| \ge \ell$, then $\mathbf{F}$ may not be
$\ell$-irreducible.

 Given a set $\mathcal{F}$ of  finite $\mathcal{L}$-structures,
 let $\Forb(\mathcal{F})$ denote the
 class of finite $\mathcal{L}$-structures $\bA$ such that no member of $\mathcal{F}$ embeds into $\bA$.
 It is a standard fact that a \Fraisse\ class $\mathcal{K}$ is a free amalgamation class if and only if $\mathcal{K} = \Forb(\mathcal{F})$ for some
 set $\mathcal{F}$ of finite irreducible $\mathcal{L}$-structures.  (See  \cite{Siniora/Solecki20}  for a proof).

Recall Theorem 4.3 from Part I, that if $\mathcal{K}$ is a \Fraisse\ class  satisfying SFAP,
then both the \Fraisse\ limit of $\mathcal{K}$ and the \Fraisse\ limit of its ordered expansion $\mathcal{K}^<$ satisfy SDAP$^+$.

\begin{prop}\label{prop.FA}
Let $\mathcal{L}$ be a
finite relational language
and $\mathcal{F}$ be a (finite or infinite)  collection of finite $\mathcal{L}$-structures which are
irreducible and
$3$-irreducible.
Then
$\Forb(\mathcal{F})$ satisfies \SFAP.
Hence both 
the \Fraisse\ limit of $\Forb(\mathcal{F})$ 
and  the \Fraisse\ limit of $\Forb(\mathcal{F})^<$
satisfy 
\EEAP$^+$.
\end{prop}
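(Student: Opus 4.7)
The plan is to verify \SFAP\ directly for $\Forb(\mathcal{F})$, from which \EEAP$^+$ for the \Fraisse\ limits of both $\Forb(\mathcal{F})$ and $\Forb(\mathcal{F})^<$ follows immediately by Theorem~4.3 of Part~I. First, $\Forb(\mathcal{F})$ is a free amalgamation class by the standard correspondence: any free amalgam of members of $\Forb(\mathcal{F})$ over a common substructure cannot embed an irreducible forbidden $\bfF$, because irreducibility forces every pair of vertices in $\iota(\bfF)$ to lie in a common relational tuple, hence in a single factor.

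For the amalgamation clause of \SFAP, given $\bfA, \bfB, \bfC, \bfD \in \Forb(\mathcal{F})$ as in Definition~\ref{defn.SFAP}, I would define $\bfE$ on the vertex set $\mathrm{D} \cup \{w'\}$ by declaring (i) $\bfE \restriction \mathrm{D} = \bfD$, (ii) $\type(w'/\bfB) = \tau$, (iii) $\bfE \restriction (\mathrm{A} \cup \{v', w'\}) \cong \bfC$ via the natural identification fixing $\mathrm{A}$ and sending $v' \mapsto v$, $w' \mapsto w$, and (iv) no additional relations. These prescriptions are mutually consistent because any relation mentioning $w'$ must mention it together with a tuple from $\mathrm{D}$, which either lies entirely in $\mathrm{B}$ (resolved by $\tau$) or mentions $v'$ alongside elements of $\mathrm{A}$ (resolved by $\bfC$); the two specifications overlap compatibly since $\tau \restriction \bfA = \type(w/\bfA)$.

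The main obstacle is verifying $\bfE \in \Forb(\mathcal{F})$, which is precisely where the $3$-irreducibility hypothesis enters. Suppose for contradiction that some $\bfF \in \mathcal{F}$ embeds into $\bfE$ via $\iota$. If $w' \notin \iota(\bfF)$, then $\iota(\bfF) \subseteq \mathrm{D}$, contradicting $\bfD \in \Forb(\mathcal{F})$. If $w' \in \iota(\bfF)$ but $v' \notin \iota(\bfF)$, then $\iota(\bfF) \subseteq \mathrm{B} \cup \{w'\}$, whose induced $\bfE$-structure is the extension of $\bfB$ by a vertex realizing the $1$-type $\tau$; this extension lies in $\Forb(\mathcal{F})$, again contradicting $\bfF \in \mathcal{F}$. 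If both $v', w' \in \iota(\bfF)$ and $\iota(\bfF) \subseteq \mathrm{A} \cup \{v', w'\}$, then $\bfF$ embeds into a copy of $\bfC \in \Forb(\mathcal{F})$, another contradiction.

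The only remaining case has $v', w' \in \iota(\bfF)$ together with some $b \in \iota(\bfF) \cap (\mathrm{B} \setminus \mathrm{A})$, and this is where $3$-irreducibility delivers the final contradiction. By $3$-irreducibility, some relation symbol $R \in \mathcal{L}$ of arity $k \ge 3$ holds on a tuple from $\iota(\bfF)$ simultaneously containing $v'$, $w'$, and $b$. But no such tuple can carry a relation in $\bfE$: the tuple fails to lie in $\mathrm{D}$ because $w' \notin \mathrm{D}$, it is not specified by $\tau$ because $v' \notin \mathrm{B}$, and it is not specified by $\bfC$ because $b \notin \mathrm{A}$, while clause (iv) precludes any further relation. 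This contradiction establishes $\bfE \in \Forb(\mathcal{F})$, completing the verification of \SFAP; Theorem~4.3 of Part~I then delivers \EEAP$^+$ for both \Fraisse\ limits.
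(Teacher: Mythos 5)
Your proof is correct and follows essentially the same route as the paper's: establish free amalgamation from irreducibility, build $\bfE$ by copying $\bfC$ on $\mathrm{A}\cup\{v',w'\}$, realizing $\tau$ over $\bfB$, and adding no further relations, then use $3$-irreducibility to rule out any embedded $\bfF\in\mathcal{F}$ whose image meets $v'$, $w'$, and some $b\in\mathrm{B}\setminus\mathrm{A}$, finishing via the Part~I theorem that SFAP implies SDAP$^+$ for both $\Forb(\mathcal{F})$ and its ordered expansion. The paper states the definition of $\bfE$ as two explicit clauses (a), (b) rather than your (i)--(iv), but they amount to the same structure, and the case analysis in the embedding check is identical.
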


\begin{proof}
Since the structures in $\mathcal{F}$ are irreducible, $\Forb(\mathcal{F})$ is a free amalgamation class.
Fix  $\bfA,\bfB,\bfC\in \Forb(\mathcal{F})$  with  $\bfA$ a substructure of both  $\bfB$ and  $\bfC$
and
$\mathrm{C} \!\setminus\! \mathrm{A} = \{v, w\}$.
Let
$\sigma,\tau$
be
realizable
$1$-types over $\bfB$ with $\sigma\re\bfA=\type(v/\bfA)$
and  $\tau\re\bfA=\type(w/\bfA)$.
Suppose
 $\bfD\in \Forb(\mathcal{F})$ is
 a $1$-vertex extension of $\bfB$ realizing $\sigma$.
 Thus, $\mathrm{D}=\mathrm{B}\cup\{v'\}$ for some $v'$
 such that
  $\type(v'/\bfB)=\sigma$.

Extend $\bfD$ to an $\mathcal{L}$-structure $\bfE$
by one vertex $w'$ satisfying  $\type(w'/\bfB)=\tau$ such that  for each relation symbol $R\in\mathcal{L}$,
letting
$k$
denote the arity of $R$,
we have the following:
\begin{enumerate}
\item[(a)]
For each $k$-tuple
$\bar{p}$ with entries from $\mathrm{A}\cup \{v', w'\}$,
let $\bar{q}$ be the $k$-tuple with entries from $\mathrm{A}\cup\{v, w\}$
such that each occurrence of $v'$, $w'$ in $\bar{p}$ (if any) is replaced by
$v$, $w$, respectively,
 and all other entries remain the same.
Then we require that $R^\bfE(\bar{p})$ holds if and only if $R^\bfC(\bar{q})$ holds.
\item[(b)]
If $k\ge 3$, then for each $b\in\mathrm{B}\setminus\mathrm{A}$
and each $k$-tuple $\bar{p}$
with entries from $\mathrm{E}$ such that $b, v', w'$ are among the entries of $\bar{p}$, we require that
$\neg R^\bfE(\bar{p})$ holds.
\end{enumerate}
It follows from (a) that $\bfE\re(\mathrm{A}\cup\{v',w'\})\cong \bfC$.
It remains to show
 that $\bfE$ is a member of $\Forb(\mathcal{F})$.
To do so,
it suffices to show that
no
 $\mathbf{F}\in\mathcal{F}$
embeds
 into  $\bfE$.

Suppose toward a contradiction that
 some  $\mathbf{F}\in\mathcal{F}$
embeds into $\bfE$.
Let $\mathbf{F}'$ denote an
  embedded copy of $\mathbf{F}$, with universe
$\mathrm{F}'\sse\mathrm{E}$.
For what follows, it helps to recall that $\mathrm{E}=\mathrm{B}\cup\{v',w'\}$.
Since $\bfD$ is in $\Forb(\mathcal{F})$,
$\mathbf{F}$ does not embed into $\bfD$, so
$\mathrm{F}'$ cannot be contained in $\mathrm{D}$.
Hence $w'$ must be in $\mathrm{F}'$.
Likewise, since
$\tau$ is a realizable $1$-type over $\bfB$,
the substructure $\bfE\re(\mathrm{B}\cup\{w'\})$  is in $\Forb(\mathcal{F})$ and hence does not contain a copy of $\mathbf{F}$.
Therefore,  $v'$ must be in
$\mathrm{F}'$.
By (a), since $\bfC$ is in $\Forb(\mathcal{F})$, the substructure $\bfE \re ( \mathrm{A} \cup \{v', w' \}) $
does not contain a copy of $\mathbf{F}$. Hence there must be some $b \in \mathrm{B}\setminus\mathrm{A}$ such that
$b$ is in $\mathrm{F}'$.
Since $\mathbf{F}$ is  $3$-irreducible,
there must be  some
relation symbol $R\in\mathcal{L}$
with arity $k \ge 3$, and some $k$-tuple $\bar{p}$ with entries from $\mathrm{F}'$ and with $b, v', w'$ among its entries, such that $R^{\bF'}(\bar{p})$ holds.
However,
 (b) implies
 $\neg R^\bfE(\bar{b})$ holds,
contradicting that $\mathbf{F}'$ is a copy of $\mathbf{F}$ in $\bfE$.
Therefore, $\mathbf{F}$ does not embed into $\bfE$.
It follows that $\bfE$ is a member of
$\Forb(\mathcal{F})$.

We have established that $\Forb(\mathcal{F})$ has \SFAP.
The Proposition follows by 
 Theorem 4.20 of \cite{CDPI}.
\end{proof}

We now consider a
type of \Fraisse\ class that is a generalization, to arbitrary finite relational languages, of the \Fraisse\ classes in finite binary relational
languages that were considered in
 \cite{Laflamme/Sauer/Vuksanovic06}.

 \begin{defn}\label{defn.unconst}
Given a
relational
language $\mathcal{L}$, letting $n$ denote the highest arity of any relation symbol in $\mathcal{L}$, for each $1\le i\le n$, let $\mathcal{L}_i$ denote
the sublanguage consisting of
the relation symbols in $\mathcal{L}$ of arity $i$.
Let $\mathcal{C}_i$ be a
set of
structures in the language $\mathcal{L}_i$ with domain $\{0,\dots,i-1\}$
that is closed under isomorphism.
Following \cite{Laflamme/Sauer/Vuksanovic06}, we call
$ \mathcal{C}_i$ a {\em universal constraint set}.

Let  $\mathcal{U}_{\mathcal{C}_i}$ denote the class of all finite relational structures $\bfA$ in the language
$\mathcal{L}_i$  for which the following holds:
Every induced substructure of $\bfA$ of cardinality $i$ is isomorphic to one of the structures in $\mathcal{C}_i$.
Let
$\mathcal{C}:=\bigcup_{1\le i\le n}\mathcal{C}_i$
and let
$\mathcal{U}_{\mathcal{C}}$ denote the free superposition of the
classes
$\mathcal{U}_{\mathcal{C}_i}$, $1\le i\le n$.
We call such a class $\mathcal{U}_{\mathcal{C}}$ {\em unrestricted}.
  \end{defn}

It is straightforward to check that an unrestricted class $\mathcal{U}_{\mathcal{C}}$  is  a \Fraisse\ class with disjoint amalgamation.

In \cite{Laflamme/Sauer/Vuksanovic06},
Laflamme, Sauer, and Vuksanovic characterized the exact big Ramsey degrees for
the \Fraisse\ structures in finite binary relational languages
whose ages are
unrestricted.
We now show that
arbitrary
unrestricted
\Fraisse\ classes  satisfy \EEAP.

\begin{prop}\label{prop.LSVSFAP}
Let $\mathcal{U}_{\mathcal{C}}$ be an
unrestricted
\Fraisse\ class.
Then $\mathcal{U}_{\mathcal{C}}$ satisfies \EEAP, hence also its ordered expansion $\mathcal{U}^{<}_{\mathcal{C}}$ satisfies \EEAP.
Moreover, the
 \Fraisse\ limits of
 $\mathcal{U}_{\mathcal{C}}$ and
$\mathcal{U}^{<}_{\mathcal{C}}$
have \EEAP$^+$.
\end{prop}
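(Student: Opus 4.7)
The plan is to verify \EEAP\ directly for $\mathcal{U}_{\mathcal{C}}$ by constructing the required amalgamation, exploiting the fact that constraints in an unrestricted class are purely local (bearing only on $i$-element substructures for each arity $i$). The ordered version $\mathcal{U}_{\mathcal{C}}^<$ will then follow by essentially the same argument, treating the linear order as an additional binary relation. Finally, for \EEAP$^+$ in the \Fraisse\ limits, I will verify the Diagonal Coding Tree Property and the Extension Property using the richness of the $1$-type tree.

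For \EEAP, take $\bfA' = \bfA$ and $\bfC' = \bfC$, so that $v' = v$ and $w' = w$. Given $\bfB \supseteq \bfA$ in $\mathcal{U}_{\mathcal{C}}$, realizable $1$-types $\sigma, \tau$ over $\bfB$ extending $\type(v/\bfA)$ and $\type(w/\bfA)$ respectively, and $\bfD \supseteq \bfB$ with $\type(v''/\bfB) = \sigma$, I define $\bfE$ on $\mathrm{D} \cup \{w''\}$ by the following rules for each relation symbol $R$ of arity $k$ and each $k$-tuple $\bar{p}$ from $\mathrm{E}$: if $w'' \notin \bar{p}$, then $R^{\bfE}(\bar{p})$ follows $\bfD$; if $w'' \in \bar{p}$ and $v'' \notin \bar{p}$, then $R^{\bfE}(\bar{p})$ follows $\tau$; if $w'', v'' \in \bar{p}$ and all other entries lie in $\mathrm{A}$, then $R^{\bfE}(\bar{p})$ follows $\bfC$; otherwise ($w'', v'' \in \bar{p}$ with at least one other entry in $\mathrm{B} \setminus \mathrm{A}$), make an arbitrary choice subject only to the constraint that the induced $k$-element substructure lies in $\mathcal{C}_k$, which is possible because $\mathcal{C}_k$ is nonempty and closed under isomorphism. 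To verify $\bfE \in \mathcal{U}_{\mathcal{C}}$, I check by case analysis that every $i$-element substructure $S \subseteq \mathrm{E}$ lies in $\mathcal{C}_i$: if $w'' \notin S$ then $S \subseteq \mathrm{D}$; if $w'' \in S$ and $v'' \notin S$ then $S$ lies in a realization of $\tau$; if $v'', w'' \in S$ and $S \setminus \{v'', w''\} \subseteq \mathrm{A}$ then $S$ is given by $\bfC$; otherwise $S$ is covered by a free choice.

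The ordered case $\mathcal{U}_{\mathcal{C}}^<$ proceeds identically, with the linear order on $\{v''\} \cup \mathrm{B}$ specified by $\bfD$, on $\{w''\} \cup \mathrm{B}$ by $\tau$, and between $v''$ and $w''$ by $\bfC$; all specifications are automatically consistent. For \EEAP$^+$ in the \Fraisse\ limits $\bK$ of $\mathcal{U}_{\mathcal{C}}$ and $\mathcal{U}_{\mathcal{C}}^<$, the Diagonal Coding Tree Property follows from the observation that the space of $1$-types over any finite substructure of $\bK$ is as rich as the local constraints allow---any partial $1$-type may be extended to a complete realizable type, and incompatible extensions can always be realized by distinct vertices---so a perfect diagonal coding subtree can be built by iteratively placing splitting nodes and coding nodes. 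The Extension Property is then verified by the standard argument: given a splitting extension $s^* \in T$ of the designated node $s \in \max(B)^+$, each remaining node in $\max(B)^+$ can be extended to length $|s^*|$ while realizing the prescribed passing types, using \EEAP\ to amalgamate one extension at a time.

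The main technical obstacle will be the Extension Property, which requires coordinating many simultaneous extensions so that the combined passing types form a $+$-similarity copy of $r_{k+1}(A)$ which itself extends to a copy of $A$. The unrestricted local structure of $\mathcal{C}$ provides exactly the flexibility needed for this coordination: any partial assignment of arity-$i$ relations can be completed one $i$-tuple at a time to a full assignment whose induced $i$-element substructures all lie in $\mathcal{C}_i$, and the \EEAP\ property established above guarantees that the passing types at the final level can be simultaneously realized.
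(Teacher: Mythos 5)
Your construction of $\bfE$ for the unordered class $\mathcal{U}_{\mathcal{C}}$ is essentially the paper's argument, and it works precisely because an unrestricted class imposes only local, per-tuple constraints (membership of each $i$-element substructure in $\mathcal{C}_i$): any assignment of relations made tuple-by-tuple is admissible, so the cases can be filled in independently with no coherence obligations across tuples. This is fine.

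The gap is in the ordered case. You write that $\mathcal{U}_{\mathcal{C}}^<$ ``proceeds identically'' with $\bfA'=\bfA$, and that the linear order specified by $\bfD$ on $\{v''\}\cup\mathrm{B}$, by $\tau$ on $\{w''\}\cup\mathrm{B}$, and by $\bfC$ on $\{v'',w''\}$ is ``automatically consistent.'' It is not. Transitivity of $<$ is a constraint that spans three vertices, and with $\bfA'=\bfA$ the types $\sigma$ and $\tau$ are only required to agree with $\type(v/\bfA)$ and $\type(w/\bfA)$ over $\bfA$, so over $\mathrm{B}\setminus\mathrm{A}$ they can diverge in a way that clashes with $\bfC$. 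Concretely: take $\mathrm{A}=\{a\}$, $\bfC$ with $a<v<w$, $\mathrm{B}=\{a,b\}$ with $a<b$, $\sigma\supseteq\{a<x,\;b<x\}$, $\tau\supseteq\{a<x,\;x<b\}$. Both restrict to $\{a<x\}$ over $\bfA$. Then $\bfD$ has $a<b<v''$ and $\tau$ forces $a<w''<b$, giving $w''<v''$, which contradicts the requirement $\bfE\re(\mathrm{A}\cup\{v'',w''\})\cong\bfC$ (needing $v''<w''$). This is exactly why the paper does \emph{not} run the same argument for $\mathcal{U}_{\mathcal{C}}^<$; instead it proves \EEAP\ for $\mathcal{LO}$ separately (Proposition \ref{prop.LO_n}, which takes a strictly larger $\bfA'$ containing a vertex $a'$ wedged between $v$ and $w$, precisely to rule out the divergence above), and then invokes preservation of \EEAP\ under free superposition to get $\mathcal{U}_{\mathcal{C}}^<$. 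To repair your argument you must either use the free-superposition route, or redefine $\bfA'$ and $\bfC'$ in the ordered case to include an intermediate vertex as in Proposition \ref{prop.LO_n}.

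Your treatment of the Diagonal Coding Tree Property and the Extension Property for the \Fraisse\ limits is at about the same (brief) level as the paper's and is acceptable; both rely on the observation that in an unrestricted class every node of the coding tree can be extended independently of the coding nodes below it.
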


\begin{proof}
Let  $\mathcal{L}$ be a finite relational language  with $n$ denoting the highest arity of any relation symbol in $\mathcal{L}$,
 and let  $\mathcal{C}=\bigcup_{1\le i\le n}\mathcal{C}_i$,
where each $\mathcal{C}_i$ is a universal constraint set.
Suppose $\bfA,\bfC\in \mathcal{U}_{\mathcal{C}}$ are given
such that $\bfC$ extends $\bfA$ by two vertices $v,w$.
Here, we simply let $\bfA'=\bfA$ and $\bfC'=\bfC$.
Suppose $\bfB\in \mathcal{U}_{\mathcal{C}}$  is any structure containing $\bfA$ as a substructure, and let $\sigma,\tau$ be $1$-types over $\bfB$ satisfying $\sigma\re \bfA=\type(v/\bfA)$ and $\tau\re\bfA=\type(w/\bfA)$.
Suppose further that $\bfD\in \mathcal{U}_{\mathcal{C}}$ extends $\bfB$ by one vertex, say $v'$, such that $\type(v'/\bfB)=\sigma$.

Let $\rho$ be the $1$-type of  $w$ over $\bfC \re( \mathrm{A} \cup \{v\})$.
Take  $\bfE$  to be any $\mathcal{L}$-structure extending $\bfD$ by one vertex, say $w'$, such that
the following hold:
$\type(w'/\bfB)=\tau$ and
$\type(w'/ (\bfD\re (A \cup \{v'\}) )$ is
the $1$-type obtained by substituting $v'$ for $v$ in $\rho$.
If $\mathcal{C}_1$ is non-empty,
then we simply
take a structure $\mathbf{Z}\in \mathcal{U}_{\mathcal{C}_1}$
and declare
$w'$ to satisfy the unary relation which the vertex in $\mathbf{Z}$ satisfies.
For  each subset $G\sse E$ of cardinality at most $n$  containing  $v'$ and $w'$ and at least one vertex of $B\setminus A$,
letting $i$ denote  the cardinality of $G$,
the
$\mathcal{L}_i$-reduct of the structure $\bfE\re G$
 is isomorphic to  a member of $\mathcal{C}_i$.
Then $\bfE$ is a member of  $\mathcal{U}_{\mathcal{C}}$, and  $\bfE\re (\mathrm{A}\cup\{v',w'\})\cong \bfC$.

Thus, $\mathcal{U}_{\mathcal{C}}$ satisfies \EEAP.
By Proposition \ref{prop.LO_n} below,  the \Fraisse\ class of finite linear orders satisfies \EEAP.
As  \EEAP\ is preserved under free superpositions,
the ordered expansion $\mathcal{U}^{<}_{\mathcal{C}}$  also satisfies \EEAP.

Let $\mathbf{U}_{\mathcal{C}}$ denote an enumerated \Fraisse\ limit of $\mathcal{U}_{\mathcal{C}}$.
Since $\mathcal{U}_{\mathcal{C}}$ is unrestricted,
the coding tree $\bS(\mathbf{U}_{\mathcal{C}})$
has the property that  all nodes  of the same length  have the same branching degree.
It is simple to construct a diagonal coding tree inside $\bS(\mathbf{U}_{\mathcal{C}})$,
because the universal constraint set allows each node $s$ in any subtree  $T$
of  $\bS(\mathbf{U}_{\mathcal{C}})$
to be extended independently of the substructure represented by the coding nodes in $T$ of length less or equal to that of $s$.
Thus, the Diagonal Coding Tree property trivially holds.
Further,  (1)
of
Definition \ref{defn.ExtProp}  is trivially satisfied, and hence  $\mathbf{U}_{\mathcal{C}}$ has the Extension Property.
Thus, $\mathbf{U}_{\mathcal{C}}$ satisfies \EEAP$^+$.

For an enumerated \Fraisse\ limit $\mathbf{U}_{\mathcal{C}}^{<}$ of
 $\mathcal{U}^{<}_{\mathcal{C}}$,
a diagonal coding tree can be constructed inside $\bS(\mathbf{U}^{<}_{\mathcal{C}})$
similarly  to the construction in Lemma 4.11 in \cite{CDPI}.
Again,  (1)
of
Definition \ref{defn.ExtProp}  is trivially satisfied, so the Extension Property holds.
Thus,
 $\mathbf{U}_{\mathcal{C}}^{<}$ satisfies \EEAP$^+$.
\end{proof}


It is straightforward to check that for any $n\ge 2$, the class of finite $n$-partite graphs satisfies SFAP.
Theorem 4.20 of Part I showed that  for any \Fraisse\ class $\mathcal{K}$ satisfying SFAP, both $\mathcal{K}$ and its ordered version $\mathcal{K}^<$ have \Fraisse\ limits satisfying SDAP$^+$.
Applying 
Theorems \ref{thm.bounds}  and \ref{thm.BRS}, Propositions \ref{prop.FA} and \ref{prop.LSVSFAP},
and Theorems 1.2 and 4.20  and from Part I,  
we  obtain  the following.



\begin{thm}\label{thm.supercool}
Let $\mathcal{L}$ be a finite relational language,
$\mathcal{K}$ a \Fraisse\ class in language $\mathcal{L}$, and $\mathcal{K}^<$ the ordered expansion of $\mathcal{K}$.
Suppose  $\mathcal{K}$ is one of the following: an unrestricted \Fraisse\ class,
 $\Forb(\mathcal{F})$ for some set $\mathcal{F}$ of finite
irreducible and
 $3$-irreducible $\mathcal{L}$-structures, 
 or the class of finite $n$-partite graphs for some $n\ge 2$.
 Then the
  \Fraisse\ limit $\bK$
  of $\mathcal{K}$ and  
  the \Fraisse\ limit $\bK^<$ 
  of $\mathcal{K}^<$
  both
  satisfy \EEAP$^+$, and hence are indivisible.
Moreover, if the language of $\mathcal{K}$ has  only
unary and binary relation symbols,
then  $\bK$ and $\bK^<$ both
 admit big Ramsey structures, and their exact big Ramsey degrees have a simple characterization.
\end{thm}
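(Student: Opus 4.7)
The plan is to handle the three families separately to establish \EEAP$^+$ (equivalently, SDAP$^+$) for both $\bK$ and $\bK^<$, then invoke the general machinery of the paper to deduce indivisibility and, under the binary-arity hypothesis, existence of a simply characterized big Ramsey structure. For the first case, when $\mathcal{K} = \mathcal{U}_{\mathcal{C}}$ is unrestricted, Proposition \ref{prop.LSVSFAP} gives \EEAP$^+$ for both $\bK$ and $\bK^<$ directly. For the second case, $\mathcal{K} = \Forb(\mathcal{F})$ for $\mathcal{F}$ a set of finite irreducible and $3$-irreducible $\mathcal{L}$-structures, Proposition \ref{prop.FA} says $\mathcal{K}$ satisfies \SFAP, and then Theorem 4.20 of Part I shows that the \Fraisse\ limits of both $\mathcal{K}$ and $\mathcal{K}^<$ satisfy SDAP$^+$.

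For the third case, the class of finite $n$-partite graphs, the only piece not already packaged by the preceding propositions is the verification that $\mathcal{K}$ satisfies \SFAP; once this is done, Theorem 4.20 of Part I again yields SDAP$^+$ for the \Fraisse\ limits of both $\mathcal{K}$ and $\mathcal{K}^<$. Working in the language $\{E, U_1, \dots, U_n\}$ with $E$ binary and each $U_i$ unary, and given the data $\bfA \le \bfC$ with $\mathrm{C} \setminus \mathrm{A} = \{v, w\}$, $1$-types $\sigma, \tau$ over $\bfB$ extending $\type(v/\bfA), \type(w/\bfA)$, and $\bfD$ extending $\bfB$ by $v'$ realizing $\sigma$, the extension $\bfE$ is built by adjoining $w'$ with $\type(w'/\bfB)=\tau$, imposing the edge from $\bfC$ between $v'$ and $w'$, and adding no other edges. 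The $n$-partiteness constraint (vertices carry exactly one unary color, and no edge joins vertices of the same color) is already encoded in the $1$-types $\sigma$ and $\tau$, so it is preserved; this is the one step that needs any verification, and it is straightforward because it reduces to checking the two-vertex substructure $\bfE \re (\mathrm{A}\cup\{v',w'\})\cong\bfC$ is already $n$-partite by hypothesis. I expect this verification to be the only place where any real work is required, and it is routine. (Alternatively, one can realize the class of $n$-partite graphs as a $\Forb(\mathcal{F})$ with $\mathcal{F}$ consisting of $1$- and $2$-element structures, which are vacuously $3$-irreducible, and cite Proposition \ref{prop.FA} directly.)

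Having established \EEAP$^+$ for both $\bK$ and $\bK^<$ in all three cases, indivisibility is immediate from Theorem \ref{thm.indivisibility}. When the language of $\mathcal{K}$ has only unary and binary relation symbols, Theorem \ref{thm.bounds} characterizes the exact big Ramsey degrees as the number of similarity types of diagonal antichains of coding nodes representing the given finite structure, i.e.\ $T(\bG,\bK) = |\Sim(\bG)|$, and Theorem \ref{thm.BRS} then produces the big Ramsey structure $\bK^*$ obtained by expanding the language with the lexicographic binary relation $\triangleleft$ and the quaternary meet-length relation $\mathscr{Q}$. The same two theorems apply to $\bK^<$ because the ordered expansion still lives in a binary language. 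This completes the deduction.
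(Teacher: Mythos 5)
Your proposal is correct and matches the paper's own argument: the three families are reduced to \EEAP$^+$ via Propositions~\ref{prop.LSVSFAP} and~\ref{prop.FA} and a direct SFAP check for $n$-partite graphs (which the paper likewise labels "straightforward"), with Theorem 4.20 of Part I handling ordered expansions, and then Theorem~\ref{thm.indivisibility} and Theorems~\ref{thm.bounds} and~\ref{thm.BRS} give indivisibility and the big Ramsey structure / exact degrees. The only tiny loose end is that verifying $n$-partiteness of the extension $\bfE$ requires not just $\bfE\re(\mathrm{A}\cup\{v',w'\})\cong\bfC$ but also that $\tau$ is realizable over $\bfB$ (covering pairs $\{w',b\}$ with $b\in\mathrm{B}\setminus\mathrm{A}$), but this is the routine verification the paper elides as well.
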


We now discuss previous
results
 recovered by Theorem \ref{thm.supercool}, as well as their  original proof methods.

 In
    \cite{Laflamme/Sauer/Vuksanovic06},
 Laflamme, Sauer, and Vuksanovic characterized  the
 exact big Ramsey degrees of the Rado graph, generic directed graph, and generic tournament.
 More generally,
 they characterized  exact big Ramsey degrees
for  the \Fraisse\ limit of  any
unrestricted \Fraisse\ class in a language
consisting of
  finitely many binary relations.
Their proof utilized Milliken's theorem
  for strong trees \cite{Milliken79}
  and the method of envelopes, building
  on exact upper bound results for big Ramsey degrees of the Rado graph due to Sauer in \cite{Sauer06}.  
The  characterization in \cite{Laflamme/Sauer/Vuksanovic06} is exactly recovered in our
Theorem \ref{thm.bounds}.
The result for ordered expansions is new to this paper. 
The indivisibility result in its full generality for unrestricted \Fraisse\ structures with relations in any arity, as well as  their ordered expansions,   is also  new.

 Theorem \ref{thm.supercool} also   extends a result of
El-Zahar and Sauer
 \cite{El-Zahar/Sauer94}, in which
they
proved indivisibility for
free amalgamation classes of
 $k$-uniform hypergraphs ($k\ge 3$)
with forbidden
$3$-irreducible substructures.
As these structures have only one isomorphism type of singleton substructure, their result says
that  for any $k\ge 3$ and any collection $\mathcal{F}$ of
irreducible,
$3$-irreducible $k$-uniform hypergraphs,
vertices  in  $\Forb(\mathcal{F})$ have big Ramsey degree
one.

We mention that for each $n\ge 2$, the \Fraisse\ class of finite
 $n$-partite graphs is easily seen to satisfy \SFAP.
 John Howe proved in his PhD thesis \cite{HoweThesis} that the generic  bipartite  graph has finite big Ramsey degrees; his  methods use an adjustment of  Milliken's theorem.
 Finite  big Ramsey degrees for $n$-partite graphs for  all $n\ge 2$    follow from  the more recent work of  Zucker   in \cite{Zucker20};
  his methods use a flexible version of coding trees and envelopes, but  lower bounds were not attempted in that
  paper.

Next we consider disjoint amalgamation classes which are ``$\bQ$-like''  in
 that their resemblance to
 linear orders
  makes them  in
some sense  rigid enough  to satisfy \EEAP.
Starting with
 the rationals  as a linear order $(\bQ,<)$,
 we shall show that the  \Fraisse\ class of finite   linear orders  satisfies
 \EEAP, and that $(\bQ, <)$ satisfies
 \EEAP$^+$.
 Further,
 the rational linear order with a vertex partition into finitely many dense pieces
 satisfies
 \EEAP$^+$.
We obtain a hierarchy  of
linear orders with
nested convexly ordered equivalence relations
that each satisfy
\EEAP$^+$.

Given $n\ge 1$, let $\mathcal{LO}_n$ denote  the \Fraisse\ class   of finite
structures
with  $n$-many independent linear orders. The
language
for $\mathcal{LO}_n$ is $\{<_i:i<n\}$, with each $<_i$ a binary relation symbol.
Let 
$\mathcal{LO}$ denote $\mathcal{LO}_1$, the class of finite linear orders.

\begin{prop}\label{prop.LO_n}
The \Fraisse\ limit of
$\mathcal{LO}$,
namely the rational linear order $\bQ$,
  satisfies \EEAP$^+$.
For each   $n\ge 2$, $\mathcal{LO}_n$  satisfies \EEAP.
\end{prop}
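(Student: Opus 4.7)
The plan is to verify SDAP for $\mathcal{LO}_n$ uniformly for all $n \ge 1$, which gives the second assertion; for the first, I will additionally establish the Diagonal Coding Tree Property and the Extension Property for an enumerated \Fraisse\ limit $\bQ$ of $\mathcal{LO}$.

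To prove SDAP for $\mathcal{LO}_n$, given $\bfA, \bfC \in \mathcal{LO}_n$ with $\mathrm{C} \setminus \mathrm{A} = \{v, w\}$, I take $\bfA' := \bfA \cup \{a_0, \ldots, a_{n-1}\}$, where for each $i < n$ the vertex $a_i$ is placed strictly between $v$ and $w$ in the order $<_i$; the positions of the $a_j$ in the remaining orders are chosen arbitrarily so that $\bfA' \in \mathcal{LO}_n$. I let $\bfC'$ be the disjoint amalgamation of $\bfA'$ and $\bfC$ over $\bfA$ realizing the prescribed relations $v <_i a_i <_i w$ (or the reverse, depending on the order of $v, w$ in $<_i$), so that $v' = v$ and $w' = w$. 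Now, for any $\bfB \supseteq \bfA'$ in $\mathcal{LO}_n$ and any $1$-types $\sigma, \tau$ over $\bfB$ extending $\type(v'/\bfA')$ and $\type(w'/\bfA')$ respectively, both $\sigma$ and $\tau$ pin down the positions of $v'$ and $w'$ as lying on opposite sides of every $a_i$ in $<_i$. Hence, given any extension $\bfD = \bfB \cup \{v''\}$ with $\type(v''/\bfB) = \sigma$, extending to $\bfE$ by $w''$ realizing $\tau$ (which is possible because any $1$-type over a finite structure in $\mathcal{LO}_n$ is realized in the class) places $v''$ and $w''$ on opposite sides of each $a_i$; the ordering between them in each $<_i$ is therefore determined and matches that of $v, w$ in $\bfC$. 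This yields $\bfE \re (\mathrm{A} \cup \{v'', w''\}) \cong \bfC$, completing SDAP.

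For $\bQ$, I additionally verify the Diagonal Coding Tree Property and the Extension Property. The tree $\bU(\bQ)$ has a transparent structure: each level $n$ consists of the $n+1$ intervals of $\bQ_n$, and between levels $n$ and $n+1$ only the interval containing $v_n$ splits (into two). A perfect diagonal coding subtree $\bT \subseteq \bU(\bQ)$ is built by induction, selecting a suitable subsequence of coding nodes of $\bU(\bQ)$ and intermediate splitting nodes, skipping enough $\bU$-levels between consecutive chosen coding nodes so that each chosen coding node is non-splitting at its $\bT$-level while exactly one splitting node appears between consecutive coding-node levels of $\bT$. The density of $\bQ$ supplies the required extensions at each step, and the recursively specified $\bT$ inherits perfection from that density.

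The Extension Property then follows from the homogeneity of $\bQ$: given a splitting node $s^* \in T$ extending the required node $s \in \max(B)^+$, the other nodes in $\max(B)^+$ can be extended to the same length as $s^*$ with any prescribed compatible passing types, because each interval determined by the previously chosen coding nodes is itself a dense linear order isomorphic to $\bQ$, hence rich enough to realize any $1$-type extension consistent with the ambient partial order. The main technical obstacle I foresee is the inductive construction of $\bT$ and the bookkeeping needed to verify all clauses of the diagonal coding subtree definition, but this is routine given the uniform level structure of $\bU(\bQ)$.
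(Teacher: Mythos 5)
Your argument is correct and follows essentially the same route as the paper: verify SDAP by introducing auxiliary ``pinning'' vertices between $v$ and $w$, and for $\bQ$ additionally sketch the diagonal coding tree and note that the Extension Property follows from density. The one substantive difference is in the choice of $\bfA'$. The paper extends by a \emph{single} vertex $a'$ placed strictly between $v$ and $w$ in \emph{every} order $<_i$ simultaneously (which is consistent, since the orders are independent), so $\bfA' = \bfA \cup \{a'\}$. You instead introduce $n$ separate vertices $a_0,\dots,a_{n-1}$, with $a_i$ between $v$ and $w$ in $<_i$ only. Both work; the single-vertex version is more economical but yours is equally valid, and the downstream reasoning (that $\sigma$ and $\tau$ force $v''$ and $w''$ onto opposite sides of the separator in each order, thereby determining $v'' <_i w''$ to match $\bfC$) is identical. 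One presentational slip: you phrase the placement of $a_i$ relative to $v,w$ as part of the definition of $\bfA'$, but $v,w \notin \mathrm{A}'$; these relations belong to $\bfC'$, not $\bfA'$ -- worth tightening when writing up. For the coding tree part, your inductive sketch matches the paper's Lemma 5.9, and your homogeneity justification for the Extension Property, while terse, is the right reason.
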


\begin{proof}
Fixing  $n\ge 1$,
suppose   $\bfA$ and $\bfC$   are in $\mathcal{LO}_n$ with
  $\bfA$   a substructure
of $\bfC$ and
$\mathrm{C}\setminus \mathrm{A}=\{v,w\}$.
Let $\bfC'$ be the extension of $\bfC$ by one
 vertex, $a'$, satisfying the following:
 For each $i<n$,
 if $v<_i w$ in $\bfC$, then
 $v<_i a'$ and $a'<_i w$ are in $\bfC'$;
 otherwise, $w<_i a'$ and $a'<_i v$ are  in $\bfC'$.
Define  $\bfA' $ to be the induced  substructure $\bfC'\re (\mathrm{A}\cup\{a'\})$ of $\bfC'$.

Suppose  that $\bfB$ is a finite linear order containing
 $\bfA'$ as a substructure, and let
$\sigma$ and $\tau$ be
$1$-types over $\bfB$ with the property that
$\sigma\re\bfA'=\type(v/\bfA')$ and $\tau\re\bfA'=\type(w/\bfA')$.
Suppose that $\bfD$  is a one-vertex extension of $\bfB$ by the vertex $v'$  so that $\type(v'/\bfB)=\sigma$ holds.
Now
let $\bfE$ be an extension of $\bfD$ by one vertex $w'$ satisfying  $\type(w'/\bfB)=\tau$.
For each $i<n$, $v<_i w$ holds  in $\bfC'$
 if and only if
$x<_i a'$ is in $\sigma$ and
 $a'<_i x$ is in $\tau$.
 (The opposite,
  $w<_i v$, holds  in $\bfC'$
 if and only if
$a'<_i x$ is in $\sigma$ and
 $x<_i a'$ is in $\tau$.)
 It follows that $v'<_i w'$ holds  in $\bfE$ if and only if $v<_i w$  holds in $\bfC$.
Therefore, we automatically obtain $\bfE\re(\mathrm{A}\cup\{v',w'\})\cong \bfC$.
Thus,  \EEAP\ holds.

The Diagonal Coding Tree Property  for $\bS(\bQ)$ is 
straightforward to prove
and follows from Lemma \ref{lem.DCTLO}.
(Recall Example 3.4 from Part I.)
The Extension Property trivially holds.
 Hence, $\bQ$ satisfies
 \EEAP$^+$.
\end{proof}

Next,  we consider \Fraisse\ classes
of structures with a linear order and
a finite vertex partition.
Following the notation in \cite{Laflamme/NVT/Sauer10},
for each $n\ge 2$, let  $\mathcal{P}_n$  denote the
\Fraisse\ class
with language $\{<,P_1,\dots,P_n\}$, where  $<$ is a
binary relation symbol and each $P_i$ a unary relation symbol, such that in any structure in $\mathcal{P}_n$,
$< $ is interpreted as a linear order and the interpretations of the $P_i$ partition the vertices.
The \Fraisse\ limit of $\mathcal{P}_n$, denoted by
$\bQ_n$,  is
the
rational linear order with a partition of its underlying set into $n$ definable pieces, each of which is dense in $\bQ$.

\begin{prop}\label{bQn}
For each $n\ge 1$,
the \Fraisse\ limit $\bQ_n$ of
the \Fraisse\ class $\mathcal{P}_n$ satisfies \EEAP$^+$.
\end{prop}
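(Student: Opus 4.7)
The plan is to verify the three ingredients of SDAP$^+$ for $\bQ_n$: SDAP for the age $\mathcal{P}_n$, the Diagonal Coding Tree Property, and the Extension Property.

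The verification of SDAP is immediate by preservation. Observe that $\mathcal{P}_n$ is the free superposition of $\mathcal{LO}$ with the (trivial) \Fraisse\ class of finite sets partitioned into the unary classes $P_1,\ldots,P_n$. The class $\mathcal{LO}$ satisfies SDAP by Proposition \ref{prop.LO_n}, the partition class trivially satisfies SFAP, and SDAP is preserved under free superposition (as recalled in Subsection \ref{subsec.Fcrs}); thus $\mathcal{P}_n$ has SDAP. Alternatively, a direct proof mirrors Proposition \ref{prop.LO_n}: define $\bfA'$ by extending $\bfA$ with a single vertex $a'$ placed strictly between $v$ and $w$ in the linear order and assigned to any fixed partition class. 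The remainder of the argument is identical to the linear-order case, since the unary predicate on $v''$ (respectively $w''$) is dictated by $\sigma$ (respectively $\tau$) and is independent of the order.

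For the Diagonal Coding Tree Property I would work inside $\bU(\bQ_n)$ as per Convention \ref{conv.Gamma_ts}, since $\bQ_n$ carries the unary predicates. Nodes of $\bU(\bQ_n)$ at level $m$ correspond to order-positions relative to $\{v_0,\ldots,v_{m-1}\}$, and coding nodes additionally record a predicate $P_i$. A perfect diagonal coding subtree $\bT\sse\bU(\bQ_n)$ can be built recursively, at each stage either designating a splitting node with two order-successors (placing the next vertex on either side of a previously declared vertex) or placing a coding node, while cycling through $P_1,\ldots,P_n$ to ensure each predicate is realized densely among the coding nodes in $\bT$. Every required extension exists in $\bQ_n$ by the \Fraisse\ property, so the construction is analogous to Example 3.4 and Lemma 4.11 of \cite{CDPI}.

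The Extension Property is then essentially automatic. Given a splitting node $s^*\in T$ extending the distinguished node $s\in\max(B)^+$, the remaining nodes in $\max(B)^+$ must be extended to length $|s^*|$. Since in $\bQ_n$ the order-position and partition class of each new vertex can be prescribed independently, and density of splitting nodes together with density of each $P_i$ in $\bT$ guarantees that every consistent choice is realized cofinally, the required extensions exist and yield a $+$-similarity copy of $r_{k+1}(A)$ that continues to an isomorphic copy of $A$. The main obstacle, which is routine but requires care, is the bookkeeping in the recursive construction of $\bT$ that simultaneously preserves diagonality (at most one splitting node per level, no splitting node at a coding-node level) and the density of each $P_i$ among coding nodes. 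Every other verification reduces to the one-dimensional rigidity of the linear order combined with the independence of the unary partition.
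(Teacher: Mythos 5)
Your proposal is correct, and the DCTP and Extension Property parts follow the paper's approach (the paper defers DCTP to Lemma~\ref{lem.DCTLO}, which is the detailed construction you sketch). The one genuine deviation is your primary verification of SDAP: you observe that $\mathcal{P}_n$ is the free superposition of $\mathcal{LO}$ (SDAP by Proposition~\ref{prop.LO_n}) with the trivial partition class (SFAP), then invoke the Part~I preservation lemma. The paper instead verifies SDAP directly, adding a single witness $a'$ between $v$ and $w$ and taking $\bfA'=\bfA\cup\{a'\}$, then noting that transitivity of $<$ pins down the order between $v''$ and $w''$ while the unary predicates are dictated directly by $\sigma$ and $\tau$ --- which is precisely the ``direct proof'' you give as a fallback. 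The superposition route is more modular and avoids repeating the Proposition~\ref{prop.LO_n} computation, at the cost of invoking the preservation theorem from Part~I; the direct route is self-contained and makes explicit why unary relations cause no friction with the order. Both are sound, and since you also supply the direct argument, your proposal subsumes the paper's.
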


\begin{proof}
Fixing  $n\ge 1$,
suppose   $\bfA$ and $\bfC$   are in $\mathcal{P}_n$ with
  $\bfA$   a substructure
of $\bfC$ and
$\mathrm{C}\setminus \mathrm{A}=\{v,w\}$.
Let $\bfC'$ be the extension of $\bfC$ by one
 vertex, $a'$,  such that
 $v<w$ in $\bfC$ if and only if
 $v< a'$ and $a'< w$ in $\bfC'$;
 (otherwise, $w<v$ and
 $w<a'$ and $a'<v$ hold  in $\bfC'$).
Let  $\bfA'=\bfC'\re (\mathrm{A}\cup\{a'\})$.

Given any $\bfB,\sigma, \tau,\bfD,v''$ as in  (2) and (3)  of Part (B) of Definition \ref{defn.EEAP_new},
any extension of $\bfD$ by one vertex $w''$ to a structure $\bfE$ with $\type(w''/\bfB)=\tau$ automatically has $v''<w''$ holding in $\bfE$ if and only if $v'<a'<w'$ holds in $\bfA'$.
Since each $P_i$ is a unary relation,
 $P_i(x)$ is in $\sigma$ if and only if $P_i(v')$ holds.
 Thus,  it follows that $P_i(v'')$ holds  in $\bfE$ for that $i$ such that $P_i(v)$ holds in $\bfA$.
 Likewise for $w''$.
 Therefore, $\bfE\re(\mathrm{A}\cup\{v'',w''\})\cong \bfC$.
 Thus,  \EEAP\ holds.

The Extension Property trivially holds for $\bQ_n$. 
The Diagonal Coding Tree Property will follow from  the next
Lemma \ref{lem.DCTLO}.
\end{proof}

\begin{lem}\label{lem.DCTLO}
There is a diagonal coding tree
representing  $\bQ_n$, for each $n\ge 1$
Hence,
these structures
have the Diagonal Coding Tree Property.
\end{lem}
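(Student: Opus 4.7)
The plan is to construct an explicit diagonal coding subtree of $\bU(\bQ_n)$ (which coincides with $\bS(\bQ_n)$ when $n=1$) by induction on levels. Since $\mathcal{L}^-=\{<\}$ for $\bQ_n$, the level set $\bU(\bQ_n)(k)$ consists of the $k+1$ possible order positions of a new vertex relative to $v_0,\dots,v_{k-1}$, each carrying an additional unary color from $\{P_1,\dots,P_n\}$ only when realized by a coding node.

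First I would fix an enumeration $\lgl v_k:k<\om\rgl$ of the universe of $\bQ_n$ with the following density property: for every nonempty convex subset $I\sse\bQ_n$ determined by finitely many of the $v_j$'s and every $i\in\{1,\dots,n\}$, infinitely many indices $k$ satisfy $v_k\in I$ and $P_i(v_k)$. This is straightforward by a back-and-forth enumeration: at each step one guarantees that every  interval seen so far gains a vertex of every color.

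Then I would build $T$ recursively. Suppose inductively that $T$ has been defined up through level $\ell^T_{k-1}+1$, with coding nodes $c^T_0,\dots,c^T_{k-1}$ representing $v_0,\dots,v_{k-1}$, and that $T$ at this level has exactly $k+1$ nodes $s_0\prec\dots\prec s_k$ in one-to-one correspondence with the $k+1$ 1-types of $\bU(\bQ_n)(k)$. To pass to stage $k+1$, I would identify the unique $p\le k$ such that $v_k$ lies in the order position represented by $s_p$; extend $s_p$ (non-splittingly) in $\bU(\bQ_n)$ to a node $s_p'$ long enough that each other $s_j$ can also be extended to the same length without splitting; then extend $s_p'$ to a splitting node $t$ (this uses that $v_k$ is placed in the position of $s_p$, so this branch does split in $\bU(\bQ_n)$); take the two immediate successors of $t$, extend all nodes further to a common level $\ell^T_k$, and at that level designate the descendant of $t$ that encodes $v_k$'s position as the coding node $c^T_k$, marked with the unary color $P_i$ where $P_i(v_k)$ holds in $\bQ_n$. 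Finally, extend each of the $k+2$ nodes at level $\ell^T_k$ one step further without any further splitting to produce the level $T\re(\ell^T_k+1)$ of size $k+2$, matching $|\bU(\bQ_n)(k+1)|$.

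The verification is then routine. By construction, (a) every level of $T$ carries at most one splitting node, of degree two, and coding node levels carry no splitting nodes, so $T$ is diagonal; (b) $\bK\re T\cong\bQ_n$ because the coding nodes $c^T_k$ faithfully record the linear order and unary type of $v_k$; (c) the level $T\re(\ell^T_n+1)$ has $n+2$ nodes, matching $|\bU(\bQ_n)(n+1)|$; (d) the passing type compatibility condition (3) in Definition \ref{defn.sct} holds automatically, as noted in the excerpt following that definition, because $\bQ_n$ satisfies \EEAP; (e) the density property of the enumeration guarantees that above every node of $T$ and for every $i\le n$, infinitely many coding nodes of color $P_i$ occur, so each unary relation is dense in $T$; (f) every node of $T$ eventually splits, so $T$ is perfect. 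This establishes the Diagonal Coding Tree Property for $\bQ_n$.

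The only delicate point is the simultaneous handling of the enumeration and the inductive construction: the enumeration must be chosen in advance to be dense in each $P_i$ and each interval, so that when stage $k+1$ requires the coding node $c^T_k$ to carry a specific unary color on a specific branch, the enumeration supplies it. Once the enumeration is in hand, the stage-by-stage extension is entirely standard, since $\bU(\bQ_n)$ already contains enough splitting nodes of every color above every position, by density of $\bQ_n$.
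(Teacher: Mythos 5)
Your construction follows essentially the same level‑by‑level strategy as the paper's proof, but there is a genuine gap in how you dispatch requirement (3) of Definition~\ref{defn.sct}. You claim that condition (3) ``holds automatically, as noted in the excerpt following that definition, because $\bQ_n$ satisfies \EEAP.'' This misreads the remark: the text after Definition~\ref{defn.sct} says the requirement \emph{can be met} by the \Fraisse\ limit of any class satisfying \EEAP, not that every diagonal subtree automatically satisfies it. Condition (3) constrains the immediate successor $(c^T_k)^+$ taken above each coding node: whenever $c^T_n\contains c^T_m$, the passing type of $(c^T_n)^+$ at $c^T_n$ over the lower tree must match that of $(c^T_m)^+$ at $c^T_m$. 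Your step ``extend each of the $k+2$ nodes at level $\ell^T_k$ one step further without any further splitting'' leaves unspecified \emph{which} of the two immediate successors of $c^T_k$ (the one containing $x<v^T_k$ or the one containing $v^T_k<x$) you retain, and this choice matters: an inconsistent mixture can violate (3). The paper's proof resolves this by always taking the $\prec$-leftmost extension of each coding node, which forces every extension of $c^T_k$ in $T$ to contain $x<v^T_k$ and thereby satisfies (3). You need to add an analogous consistent rule; without it your verification point (d) does not go through.

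Two smaller points. First, the Diagonal Coding Tree Property quantifies over \emph{every} enumerated \Fraisse\ structure for $\mathcal{P}_n$, so you are not free to ``fix an enumeration'' with a pre-arranged density property; fortunately the density you want is automatic for any enumerated \Fraisse\ limit of $\mathcal{P}_n$, so this is only a matter of phrasing. Second, your inductive hypothesis that $c^T_0,\dots,c^T_{k-1}$ ``represent $v_0,\dots,v_{k-1}$'' cannot be literally true, since $c^T_{k-1}$ lives at a level far above $k-1$ and hence represents some later vertex; what you mean (and what the paper does) is that the vertices $v^T_0,\dots,v^T_{k-1}$ represented by the chosen coding nodes span a substructure isomorphic, via $v_j\mapsto v^T_j$, to $\bQ_n\re\{v_0,\dots,v_{k-1}\}$. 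Stating this precisely would avoid the impression that $\bK\re T$ is being built as $\bQ_n$ itself rather than a subcopy.
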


\begin{proof}
We have already seen in Figure 1.\ for  Example 3.5  in \cite{CDPI} 
that $\bS(\bQ)=\bU(\bQ)$ is a skew tree with binary splitting.
Similarly, for $n\ge 2$, $\bU(\bQ_n)$ is a skew tree with binary splitting.
In general,
 to construct a diagonal coding subtree  $\bT$ of $\bU(\bQ_n)$, it only remains to choose splitting nodes for $\bT$ (which are coding nodes in $\bU$ but not in $\bT$) and then choose other coding nodes in $\bU$ to be inherited as the coding nodes in $\bT$, so as to satisfy requirements (2) and (3) of Definition \ref{defn.sct}, the definition of diagonal coding subtree.
The construction is a slight modification of the one given in \cite{Laflamme/NVT/Sauer10}, where they constructed diagonal antichains of (non-coding) trees for $\bQ_n$.

Take the only node in $\bU(0)$, $c_0$, to be the least splitting node in $\bT$.
 Let $\bT\re 1$ consists of
 the two immediate successors of $c_0$ in $\bU$, say $s_0\prec s_1$.
Then extend $s_0$ to the next coding node in $\bU$, and label this node $c^{\bT}_0$.
If $n\ge 2$, we also require that
$c^{\bT}_0$   satisfies the same unary relations as $c_0$ does.
Take any extension $t_1\contains s_1$ in $\bU$ of length
 $|c^{\bT}_0|$.
The set $\{t_0,t_1\}$ make up the nodes in $\bT$ at the level of its least coding node,  $c^{\bT}_0$.
Extend $c^{\bT}_0$ $\prec$-leftmost in $\bU$, call this node $u_0$.
There is only one immediate successor of $t_1$ in $\bU$, call it $u_1$.
Let $\bT\re (|c^{\bT}_0|+1)=\{u_0,u_1\}$.

In general, given $n\ge 1$ and $\bT$ constructed up to nodes of length $|c^{\bT}_{n-1}|+1$,
enumerate these nodes in $\prec$-increasing order as $\lgl t_i:i<n+2\rgl$.
Let $j$ denote the index of the node that will be extended to the next coding node, $c^{\bT}_n$.
This is the only node that needs to branch before the level of $c^{\bT}_n$.
Let $s$  be the shortest   splitting node in $\bU$
extending $t_{j}$.
Denote  its immediate successors by $s_0,s_1$, where
$s_0\prec s_1$.
Let $c^{\bT}_n$ be the coding node of least length in $\bS$ extending $s_0$; if $n\ge 2$, also require that $c^{\bT}_n$ satisfies the same unary relation as $c_n$.
Extend all the nodes $s_0$ and $t_i$, $i\in (n+2)\setminus\{j\}$ to nodes in $\bS$ of length
$|c^{\bT}_n|$.
These nodes along with $c^{\bT}_n$ construct $\bT\re |c^{\bT}_n|$.
Take the $\prec$-leftmost extension of $c^{\bT}_n$ to be its immediate successor in
$\bT$.
All other nodes in $\bT\re |c^{\bT}_n|$ have only one immediate successor in $\bS$, so there is no choice to be made.

This constructs a diagonal tree $\bT$  representing a copy of $\bQ_n$.
Note that taking the $\prec$-leftmost extension of each coding node has the effect that
all extensions of any  coding node  $c^{\bT}_n$ in $\bT$ include the formula $x<v^{\bT}_n$,
satisfying  (3) of the definition of  diagonal coding tree.
\end{proof}

Next, we consider  \Fraisse\ classes with
a linear order and
finitely many
convexly ordered
equivalence relations:
An equivalence relation on a linearly ordered set is {\em convexly ordered} if each of its
equivalence classes is an interval with respect to the linear order.

Given  the
language
$\mathcal{L}=\{<,E\}$,
where $<$ and $E$ are binary relation symbols,
let $\mathcal{COE}$ denote the \Fraisse\ class
of
{\em convexly ordered equivalence relations},
$\mathcal{L}$-structures in which $<$ is interpreted as a linear order and $E$ as an equivalence relation that is convex with respect to that order.
The \Fraisse\ limit of  $\mathcal{COE}$, denoted by
$\bQ_{\bQ}$, is the dense linear order without endpoints with an equivalence relation
that has
infinitely many equivalence classes, each
an interval of order-type
$\bQ$, and with an induced order on the set of equivalence classes that is also of order-type $\bQ$.
One can  think of $\bQ_\bQ$ as $\bQ$  copies of $\bQ$ with the lexicographic order.
This  structure was
described
by Kechris, Pestov, and Todorcevic in  \cite{Kechris/Pestov/Todorcevic05}, where they proved that its automorphism group is extremely amenable;
 from
 the main result of
  \cite{Kechris/Pestov/Todorcevic05},
  it then follows that
$\mathcal{COE}$ has the Ramsey property.
This generated interest in  the question of
whether $\bQ_{\bQ}$ has finite big Ramsey degrees or big Ramsey structures.

Let $\mathcal{COE}_2$ denote the \Fraisse\ class in language $\{<,E_0,E_1\}$, where $<$,
$E_0$ and $E_1$
are binary relation symbols,
such that in any structure in $\mathcal{COE}_2$, $<$ is interpreted as a linear order, $E_0$ and $E_1$ as convexly ordered equivalence
relations, and with the additional property that the interpretation of $E_1$ is a coarsening of that of $E_0$; that is, for any
$\bfA$ in $\mathcal{COE}_2$, $a\, E_0^\bfA\, b$ implies $a\, E_1^\bfA\, b$.
Then Flim$(\mathcal{COE}_2)$ is
 $\bQ_{\bQ_{\bQ}}$, that is $\bQ$ copies of $\bQ_{\bQ}$; we shall  denote this as
  $(\bQ_{\bQ})_2$.
  One can see that this recursive construction gives rise to a hierarchy of dense linear orders without endpoints with finitely many convexly ordered  equivalence relations, where each successive equivalence relation coarsens the previous one.
In general,   let $\mathcal{COE}_{n}$ denote the \Fraisse\ class
in the  language $\{<,E_0,\dots, E_{n-1}\}$
where $<$ is interpreted as a linear order and
each $E_i$  $(i<n)$ is interpreted as a
convexly ordered
equivalence relation, and such that
 for each $i<n-2$, the interpretation of $E_{i+1}$ coarsens that of $E_i$.
Let
$(\bQ_{\bQ})_{n}$ denote
the \Fraisse\ limit of  $\mathcal{COE}_{n}$.

More generally, we may consider \Fraisse\ classes
that are a
blend
of the $\mathcal{COE}_n$ and $\mathcal{P}_p$, having
finitely many linear orders,
finitely many convexly ordered equivalence relations, and a partition into finitely many pieces (each of which, in the \Fraisse\ limit, will be dense).
 Let $\mathcal{L}_{m,n,p}$ denote the  language consisting of
 finitely many binary relation symbols, $<_0,\dots, <_{m-1}$, finitely many binary relation symbols $E_0,\dots, E_{n-1}$,
and finitely many unary relation symbols $P_0,\dots,P_{p-1}$.
 A \Fraisse\ class $\mathcal{K}$ in language
 $\mathcal{L}_{m,n,p}$  is a member of
 $\mathcal{LOE}_{m,n,p}$
 if each $<_i$, $i<m$, is interpreted as a linear order,
each $E_j$, $j<n$, is interpreted as a
convexly ordered
equivalence relation with respect to exactly one of the linear orders $<_{i_j}$, for some $i_j<\ell$,
and the interpretations of the $P_k$, $k<p$, induce a vertex partition into at most $p$ pieces.
Let $\mathcal{LOE}$ be the union over all triples $(m,n,p)$
of $\mathcal{LOE}_{m,n,p}$.
Let $\mathcal{COE}_{n,p}$
be
the  \Fraisse\ class
in
$\mathcal{LOE}_{1,n,p}$ for which  the reduct to the language $\{<_0,E_0,\dots, E_{n-1}\}$ is a member of $\mathcal{COE}_n$.

\begin{prop}\label{prop.loe}
For any $n,p$,
the \Fraisse\ limit of
 $\mathcal{COE}_{n,p}$
satisfies \EEAP, the Labeled  Diagonal Coding Tree Property, and the Labeled Extension Property. 
\end{prop}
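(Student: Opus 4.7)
The proof has three components, each paralleling the template of Propositions \ref{prop.LO_n} and \ref{bQn}.

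For \EEAP, I would use a single ``witness'' vertex $a'$ adjoined to $\bA$ that records the relevant relations between $v$ and $w$. Suppose $\bA\subseteq\bC$ lie in $\mathcal{COE}_{n,p}$ with $\mathrm{C}\setminus\mathrm{A}=\{v,w\}$ and, without loss of generality, $v<_0 w$. Let $i^*=\min\{i<n:v\,E_i^{\bC}\,w\}$, with $i^*=n$ if no such $i$ exists. Define $\bC'$ by adjoining $a'\in P_0$ with $v<_0 a'<_0 w$, lying in the common $E_i$-class of $v$ and $w$ for $i\ge i^*$ (forced by convexity since $v\,E_i\,w$ there) and in its own $E_i$-class for $i<i^*$; set $\bA'=\bC'\re(\mathrm{A}\cup\{a'\})$. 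Given any $\bB\supseteq\bA'$, $1$-types $\sigma,\tau$ over $\bB$ extending $\type(v/\bA')$ and $\type(w/\bA')$, and $\bD=\bB\cup\{v''\}$ with $\type(v''/\bB)=\sigma$, extend $\bD$ by $w''$ realizing $\tau$. Then $\bE\re(\mathrm{A}\cup\{v'',w''\})\cong\bC$, since $v''<_0 a'<_0 w''$ forces $v''<_0 w''$; for $i\ge i^*$ transitivity of $E_i$ from $v''\,E_i\,a'\,E_i\,w''$ forces $v''\,E_i\,w''$; for $i<i^*$ the convexity of $E_i$ combined with $v''<_0 a'<_0 w''$ and $\neg(v''\,E_i\,a')$ forces $\neg(v''\,E_i\,w'')$; and unary partition memberships transfer via $\sigma,\tau$.

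For the Labeled Diagonal Coding Tree Property, I would construct a diagonal coding subtree $\bT\subseteq\bU(\bK)$ by the level-by-level procedure of Lemma \ref{lem.DCTLO}, using that $\bU(\bK)$ has abundant branching from the multiplicity of $1$-types. Define the labeling $\psi$ on splitting nodes of $\bT$ by $\psi(s)=\min\{i\le n:v^{\bT}_j\,E_i\,v^{\bT}_k\}$, taken over any two coding nodes with $c^{\bT}_j\wedge c^{\bT}_k=s$ and reading $E_n$ as the trivial equivalence; take $q=n+1$. This value is independent of the choice of $j,k$ and depends only on the pair of partial $1$-types over $\{v_{|s|}\}$, matching the format required in Definition \ref{defn.LDCT}. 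Condition (a) holds since deeper splitting nodes correspond to coding nodes living below a common successor of $s$ and so sharing strictly more $E_i$-classes; (b) is ensured by interleaving splits of each label value during the construction, possible because every partial $1$-type is realized densely in $\bU(\bK)$; (c) is built into the definition; (d) is arranged by always choosing the maximal splitting node below each coding node $c^{\bT}_m$ to lie within $v^{\bT}_m$'s $E_0$-class.

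For the Labeled Extension Property, given $A,k,u,B,s$ as in Definition \ref{LEP} with $\psi(u)=m$ and $s'\supseteq s$ in $T$ satisfying $\psi(s')\ge m$, density (condition (b)) furnishes a splitting node $s^*\supseteq s'$ with $\psi(s^*)=m$; the remaining nodes of $\max(B)^+$ then extend to length $|s^*|$ by repeated applications of \EEAP, exactly as in the unlabeled case. The main obstacle is the inductive construction of $\bT$: it must simultaneously achieve diagonality, represent a copy of $\bK$ with the correct passing types, ensure density of every one of the $n+1$ label values, and force $\psi=0$ immediately below each coding node. Meeting all four requirements together requires careful interleaving of choices at each stage, with the ambient \Fraisse\ limit supplying enough universality to accommodate them.
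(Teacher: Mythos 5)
Your argument is correct and delivers the three ingredients the paper also proves (it routes the statement through Lemmas \ref{lem.COESDAP} and \ref{lem.Q_Qbiskew}), but your treatment of \EEAP\ takes a genuinely different route. The paper's Lemma \ref{lem.COESDAP} proves \EEAP\ for the more general class $\mathcal{LOE}$ by adjoining \emph{one witness per binary relation symbol} ($m+n$ vertices $a'_k$ in total: one $a'_i$ sandwiched between $v$ and $w$ in each order $<_i$, and one $a'_{m+j}$ per $E_j$ placed so that $v\,E_j\,a'_{m+j}$ always holds while $w\,E_j\,a'_{m+j}$ holds iff $v\,E_j\,w$); it then extracts the relation between $v''$ and $w''$ purely from transitivity of $<_i$ and $E_j$, without invoking convexity. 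You instead use a \emph{single} witness $a'$ placed at the threshold level $i^*$, reading off the forced relations from transitivity (for $i\ge i^*$) together with convexity of the $E_i$ classes (for $i<i^*$). Your variant is more economical and exploits exactly the single-order nested-convex structure that distinguishes $\mathcal{COE}_{n,p}$, whereas the paper's per-relation witnesses work verbatim for independent linear orders as well; neither argument subsumes the other. One small caution on your version: the placement of $a'$ in $<_0$ must be chosen between the $E_{i^*-1}$-classes of $v$ and of $w$ (these are disjoint intervals since $v\not\hskip-.05in E_{i^*-1}\,w$), or else declaring $a'$ to be a singleton $E_i$-class for $i<i^*$ would violate convexity; this detail is worth making explicit. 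On the coding-tree side, your construction and $\psi$-definition match the intent of Lemma \ref{lem.Q_Qbiskew} (the paper constructs $\bT$ level by level for $\bQ_\bQ$ and then generalizes, assigning labels by ``equivalence depth''), and your formula $\psi(s)=\min\{i\le n : v^{\bT}_j\,E_i\,v^{\bT}_k\}$ with $E_n$ trivial is actually the cleanest way to state the intended labeling; the paper's displayed formula for general $n$ has its indices garbled (the stated middle case ``$v_{k'}\,E_{i-1}\,v_{\ell'}$ but $\neg(v_{k'}\,E_i\,v_{\ell'})$'' is impossible under the nesting convention that $E_{i+1}$ coarsens $E_i$), so your version is preferable there.
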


Proposition \ref{prop.loe} will follow from the next two lemmas.

\begin{lem}\label{lem.COESDAP}
Each
\Fraisse\ class  in  $\mathcal{LOE}$ satisfies  \EEAP.
\end{lem}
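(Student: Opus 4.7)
The plan is to mimic the construction used for $\mathcal{LO}_n$ and $\bQ_n$ (Propositions \ref{prop.LO_n} and \ref{bQn}), by introducing a single auxiliary vertex $a'$ lying strictly between $v$ and $w$ in every linear order, whose equivalence class behavior faithfully records the isomorphism type of $\bfC \re \{v,w\}$. Given $\bfA\le\bfC$ in $\mathcal{K}$ with $\mathrm{C}\setminus\mathrm{A}=\{v,w\}$, I would define $\bfC'$ to be the extension of $\bfC$ by one new vertex $a'$ specified as follows. For each $i<m$, I place $a'$ strictly between $v$ and $w$ with respect to $<_i$, matching the $<_i$-order of $v$ and $w$ in $\bfC$. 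For each $j<n$, with $E_j$ convex with respect to $<_{i_j}$: if $v \,E_j\, w$ holds in $\bfC$, I set $a'\,E_j\,v$ (this is in fact forced by convexity, since $a'$ lies strictly $<_{i_j}$-between $v$ and $w$); if instead $\neg(v\,E_j\,w)$, I place $a'$ in a brand new $E_j$-class strictly between the $E_j$-classes of $v$ and $w$. For the unary predicates, $a'$ is assigned to any $P_k$. Set $\bfA'=\bfC'\re(\mathrm{A}\cup\{a'\})$.

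Next I would verify that $\bfC'$ really lies in $\mathcal{K}$. The only nontrivial point is that the coarsening requirements among the $E_j$'s, together with the convexity constraints, must be jointly satisfied. Let $j_0$ be the least index (possibly $n$) with $v\,E_{j_0}\,w$. Then for $j\ge j_0$ the $E_j$-class of $a'$ is forced to agree with that of $v$ and $w$, and for $j<j_0$ the class of $a'$ is a fresh singleton nested inside the $E_{j_0}$-class of $v$ and $w$. Since singletons refine any containing class and convexity is automatic from the chosen $<_{i_j}$-position of $a'$, all nesting and convexity requirements are respected.

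To check \EEAP, suppose $\bfB\supseteq\bfA'$ is in $\mathcal{K}$, and $\sigma,\tau$ are $1$-types over $\bfB$ with $\sigma\re\bfA'=\type(v'/\bfA')$ and $\tau\re\bfA'=\type(w'/\bfA')$, and $\bfD$ extends $\bfB$ by $v''$ realizing $\sigma$. Take any extension $\bfE$ of $\bfD$ by a single vertex $w''$ realizing $\tau$; the claim is that $\bfE\re(\mathrm{A}\cup\{v'',w''\})\cong\bfC$ is automatic. For each linear order $<_i$, the types $\sigma$ and $\tau$ specify the $<_i$-position of $v''$ and of $w''$ relative to $a'$, reproducing the $<_i$-configuration of $v,w,a'$ in $\bfC'$, and hence forcing the correct $<_i$-relation between $v''$ and $w''$. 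For each $E_j$: if $v\,E_j\,w$ in $\bfC$, both $\sigma$ and $\tau$ contain the formula witnessing $E_j$-equivalence with $a'$, so $v''\,E_j\,w''$ follows by transitivity; if $\neg(v\,E_j\,w)$, then $a'$ sits in its own $E_j$-class strictly $<_{i_j}$-between $v''$ and $w''$, and convexity of the $E_j$-class of $a'$ forces $v''$ and $w''$ to lie in distinct $E_j$-classes, as required. The unary predicates on $v''$ and $w''$ are read off $\sigma$ and $\tau$, so $\bfE\re(\mathrm{A}\cup\{v'',w''\})\cong\bfC$ follows.

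The main obstacle is purely bookkeeping: one must confirm that the $E_j$-class chosen for $a'$ in the case $\neg(v\,E_j\,w)$ can simultaneously be made a fresh singleton for every such $j$ while remaining nested correctly inside the $E_{j_0}$-class of $v$ and $w$, and that across the several linear orders $<_i$ (with possibly different associated $E_j$'s) no contradictory constraints on the position of $a'$ arise. This is handled by the observation that the $<_i$-constraints only place $a'$ in the open interval $(v,w)$ of each order independently, while the equivalence constraints live entirely within the $E_{j_0}$-class of $v,w$ relative to $<_{i_j}$, so the free amalgamation of these constraints on the fresh vertex $a'$ is consistent.
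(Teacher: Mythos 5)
Your approach differs genuinely from the paper's. You introduce a \emph{single} auxiliary vertex $a'$ placed strictly between $v$ and $w$ in every linear order, encode $\neg(v\,E_j\,w)$ by placing $a'$ in a fresh singleton $E_j$-class, and then appeal to \emph{convexity} in $\bfE$ to force $\neg(v''\,E_j\,w'')$. The paper instead introduces $m+n$ auxiliary vertices, one dedicated to each relation symbol: for each $j$ with $\neg(v\,E_j\,w)$ it requires $v\,E_j\,a'_{m+j}$ and $\neg(w\,E_j\,a'_{m+j})$, so that $x\,E_j\,a'_{m+j}\in\sigma$ and $\neg(x\,E_j\,a'_{m+j})\in\tau$ and the conclusion $\neg(v''\,E_j\,w'')$ follows from \emph{transitivity} of $E_j$ alone, with no appeal to convexity or to the $<_{i_j}$-position of the auxiliary vertex. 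Both verification steps are valid, and your version is more economical in auxiliary vertices.

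However, there is a genuine gap in the step where you assert that $\bfC'$ lies in $\mathcal{K}$. Your verification fixes a least $j_0$ with $v\,E_{j_0}\,w$ and argues that the $E_j$-constraints for $j<j_0$ are nested inside the $E_{j_0}$-class; this presupposes that the $E_j$'s convex with respect to a common linear order form a single coarsening chain. That is built into $\mathcal{COE}_{n,p}$, but it is not part of the definition of a general class in $\mathcal{LOE}$. If two relations $E_j,E_{j'}$ are convex with respect to the same $<_{i}$ and are not nested, the positions demanded of the single vertex $a'$ can be incompatible: $\{a'\}$ must be an interval for $E_j$ and for $E_{j'}$, and these two interval conditions can require $a'$ to lie on opposite sides of the same element of $\mathrm{A}$. (Such a configuration is in fact incompatible with the amalgamation property, so for any actual $\mathcal{LOE}$ \Fraisse\ class the relevant $E_j$'s \emph{do} form a chain — but that is a nontrivial structural fact which your proof neither states nor proves.) The paper's construction sidesteps this entirely: because each $a'_k$ carries constraints from only one relation symbol, the existence of $\bfC'$ does not depend on any interaction between different $E_j$'s. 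To repair your proof, you should either prove the nesting fact from the amalgamation property, or switch to one auxiliary vertex per relation as the paper does.
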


\begin{proof}
Suppose   $\bfA$ and $\bfC$   are in $\mathcal{K}$ with
  $\bfA$   a substructure
of $\bfC$ and
$\mathrm{C}\setminus \mathrm{A}=\{v,w\}$.
The unary relations are handled exactly as they were in Proposition \ref{bQn}, so we  need to check that \EEAP\ holds for the binary relations.

Let $\bfC'$ be an  extension of $\bfC$
by   vertices $a'_k$ ($k<m+n$) satisfying the following:
For each $i<m$,
$v<_i w$ if and only if $v<_i a'_i$ and $a'_i<_i w$ in $\bfC'$.
Given  $j<n$,  if
$v\, E_j\, w$  holds  in $\bfC$, then require that $a'_{m+j}$ satisfies  $v\, E_j\, a'_{m+j}$ and $w\, E_j\, a'_{m+j}$   in $\bfC'$.
If $v \not \hskip-.06in  E_j\, w$  holds in $\bfC$,
then require that
$a'_{m+j}$ satisfies
 $v\, E_j\, a'_{m+j}$ and $w \not \hskip-.06in E_j\, a'_{m+j}$ in $\bfC'$.
Let $\bfA'=\bfC'\re(\mathrm{A}\cup\{a'_k:k<m+n\})$.

Suppose  that $\bfB\in\mathcal{K}$ contains
 $\bfA'$ as a substructure, and let
$\sigma$ and $\tau$ be  consistent realizable $1$-types over $\bfB$ with the property that   $\sigma\re\bfA'=\type(v/\bfA')$ and $\tau\re\bfA'=\type(w/\bfA')$.
Suppose that $\bfD$  is a one-vertex extension of $\bfB$ by the vertex $v'$  satisfying $\type(v'/\bfB)=\sigma$.
Now
let $\bfE$ be an extension of $\bfD$ by one vertex $w'$ satisfying  $\type(w'/\bfB)=\tau$.
The same argument as in the proof of Proposition \ref{prop.LO_n}
ensures that  for each $i<m$, $v'<_i w'$ in $\bfE$ if and only if $v<_i w$ in $\bfC$.

Fix $j<n$.
 If
$v\, E_j\, w$  in $\bfC$,  then  as $v\, E_j\, a'_{m+j}$ and $w\, E_j\, a'_{m+j}$  hold in $\bfC'$,  the formula
 $x\, E_j\, a'_{m+j}$  is in both $\sigma$ and $\tau$.
Since $v'$ satisfies $\sigma$ and $w'$ satisfies $\tau$,
it follows that $v\, E_j\, w$  in $\bfE$.
On the other hand, if
 $v \not \hskip-.06in E_j\, w$ holds  in $\bfC$,
 then  the formula
  $x\, E_j\, a'_{m+j}$ is in $\sigma$  and $x \not \hskip-.06in E_j\, a'_{m+j}$ is in $\tau$.
  Again, since
 $v'$ satisfies $\sigma$ and $w'$ satisfies $\tau$,
it follows that $v  \not \hskip-.06in E_j\, w$  in $\bfE$.
Thus,
$\bfE\re(\mathrm{A}\cup\{v',w'\})\cong \bfC$.
Hence \EEAP\ holds.
\end{proof}

\begin{lem}\label{lem.Q_Qbiskew}
$(\bQ_{\bQ})_n$, for each $n\ge 1$,
has  the Labeled Diagonal Coding Tree Property and the Labeled Extension Property.
Moreover,
the \Fraisse\ limit of
any  class $\mathcal{K}$ in
$\mathcal{LOE}_{1,n,p}$
also has the Labeled  Diagonal Coding Tree Property and the Labeled Extension Property.
\end{lem}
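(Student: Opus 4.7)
The plan is to extend the unlabeled construction of Lemma~\ref{lem.DCTLO} by introducing a labeling $\psi$ that records the ``equivalence level'' at which each split occurs. For $(\bQ_\bQ)_n$ in the language $\{<, E_0, \dots, E_{n-1}\}$, set $q = n + 1$ and define a partition $Q_0, \dots, Q_n$ of pairs of distinct partial $1$-types over a singleton $\{v\}$ (in the binary relation symbols): $(\sigma,\tau) \in Q_0$ when both types contain $x\,E_0\,v$ and they differ only in the $<$-relation to $v$; $(\sigma,\tau) \in Q_m$ for $1 \le m \le n-1$ when both contain $x\,E_m\,v$ and they disagree on $x\,E_{m-1}\,v$; and $(\sigma,\tau) \in Q_n$ when they disagree on $x\,E_{n-1}\,v$. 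At every splitting node $s$ in the tree $\bT$ to be built, the two immediate successors $s_0, s_1$ of $s$ in $\widehat{\bT}$ differ precisely in their relations to $v_{|s|}$, so every pair of coding nodes meeting at $s$ yields the single pair of restrictions $(s_0\re\{v_{|s|}\}, s_1\re\{v_{|s|}\})$; we build $\bT$ so that this pair always lies in exactly one $Q_m$, making $\psi(s) := m$ well-defined.

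I would construct $\bT$ recursively by the same level-by-level scheme as in Lemma~\ref{lem.DCTLO}, with the additional decision at each splitting level of which $Q_m$-pair to instantiate. The crucial structural fact is that once we commit to one side of a $\psi = m$ split at $s$, the coding nodes below that side all lie within a single $E_m$-class of $v_{|s|}$, which structurally forces every subsequent split below to have label at most $m$; this delivers property (a) for free. Properties (c) and (d) hold by the definition of the partition and by stipulating that the final split before each coding node be of label $0$ (always possible, since within each $E_0$-class the order $<$ is dense). For property (b), within a sub-region admitting label $m$ (i.e., vertices sharing a common $E_m$-class of $v_{|s|}$) we need $\psi = m$ splits cofinally, and these exist by the density of $E_{m-1}$-classes within each $E_m$-class of $(\bQ_\bQ)_n$; a fair interleaving during construction ensures each admissible label appears infinitely often.

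For the Labeled Extension Property, given $A \sse T \in \mathcal{T}$, an $\mathrm{L}+$-similarity copy $B$ of $r_k(A)$, the splitting node $u \in \max(r_{k+1}(A))$ with $\psi(u) = m$, and a node $s' \contains s$ in $T$ with $\psi(s') \ge m$: the inequality guarantees that $s'$ lies in a region of $T$ admitting label-$m$ splits, and by property (b) such splits are cofinally available, yielding some $s^* \contains s'$ in $T$ with $\psi(s^*) = m$. The remaining nodes in $\max(B)^+$ are then extended to length $|s^*|$ with the required passing types using \EEAP\ (Lemma~\ref{lem.COESDAP}), following the argument of Proposition~\ref{prop.LO_n}. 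For any $\mathcal{K}$ in $\mathcal{LOE}_{1,n,p}$, the $p$ unary predicates contribute only a finite partition at the top of $\bU(\bK)$ by Convention~\ref{conv.Gamma_ts}; the same binary construction then proceeds within each (dense) unary class. The main obstacle is reconciling (a) and (b) in the construction: (a) forces labels to decrease moving downward, while (b) requires each label to reappear cofinally. The resolution is the observation that the sub-region below a $\psi = m$ split is itself, structurally, a copy of $(\bQ_\bQ)_m$, so every label in $[0, m]$ remains cofinally available there; the careful interleaving in the construction makes every admissible label actually occur cofinally often.
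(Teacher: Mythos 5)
Your proposal follows essentially the same route as the paper: build the labeled diagonal coding tree level by level as in Lemma~\ref{lem.DCTLO}, with $\psi$ recording the equivalence level at which each split happens, working from coarser to finer splits between successive coding levels. Your explicit formula for the partition $Q_0,\dots,Q_n$ is actually a cleaner (and, I believe, corrected) version of the formula the paper gives at the end of its proof, which as written has the subscripts the wrong way round (for $1\le i<n$ it asserts ``$E_{i-1}$-equivalent but not $E_i$-equivalent,'' which is impossible since $E_i$ coarsens $E_{i-1}$).

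Two imprecisions worth flagging. First, when you justify the Labeled Extension Property by appealing to property (b), note that (b) as stated only guarantees cofinally many splitting nodes with the \emph{same} label as a given splitting node, not all labels $\le\psi(s')$; what you actually need (and what the construction indeed delivers, as the paper implicitly arranges by its outside-in placement of splits) is that below any node $s'$ with $\psi(s')\ge m$ there are cofinally many splits of every label in $[0,\psi(s')]$. Your ``sub-region'' observation is the right mechanism for this, but it is doing more work than a bare citation of (b) suggests. Second, the claim that the sub-region below a $\psi=m$ split is ``structurally a copy of $(\bQ_\bQ)_m$'' is a bit loose: once confined to a single $E_{m-1}$-class (say), the relations $E_{m-1},\dots,E_{n-1}$ become trivial there, so the non-trivial structure is that of $(\bQ_\bQ)_{m-1}$; the count is off by one, though the substance of the argument is unaffected. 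The paper sidesteps these issues by spelling out the placement of splitting nodes explicitly in the two-successor and four-successor cases and then asserting the general case is analogous, whereas you argue at a higher level of abstraction; both are acceptable, and your version of the $Q_m$ partition is the one a reader should trust.
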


\begin{proof}
We present the construction for $\bQ_{\bQ}$ and then
discuss the construction for the more general case.
Let
$\bU$ denote $\bU(\bQ_{\bQ})$.
It may aid the reader to review Figure 3.\ in \cite{CDPI}, 
where a graphic is   presented for a particular enumeration of
$\bQ_{\bQ}$.

We  construct a subtree $\bT$ of
$\bU$
 which is diagonal and such that
for each $m$, the immediate successors of the nodes in
$\bT\re |c^{\bT}_m|$
have $1$-types over $\bQ_{\bQ}\re \{v^{\bT}_j:j\le m\}$ which
are in one-to-one correspondence (in $\prec$-order)
with the $1$-types in
$\bU(m+1)$.
The idea is relatively simple:
We work our way from the outside (non-equivalence) inward (equivalence) in the way we construct the splitting nodes in $\bT$.
We will also define a function $\psi$ on the splitting nodes in $\bT$ with values in $\{0,1\}$ meeting  the Labeled Diagonal Coding Tree  requirements.

Given $\bT\re |c^{\bT}_{m-1}|$,
let
$\varphi:\bU(m)\ra \bT\re |c^{\bT}_{m-1}|$
 be the $\prec$-preserving bijection, and
let $t_*$ denote the node
$\varphi(c_m)$ in $\bT\re |c^{\bT}_{m-1}|$.
This $t_*$ is the node which we need to extend to the next coding node.
Recall that only the coding nodes in
$\bU$
 have more than one immediate successor; so $t_*$ is the only node we need to extend to one or three splitting nodes before making the level  $\bT\re |c^{\bT}_m|$.

The simplest case is when the coding node $c_m$ has two immediate successors:
these contain $\{x<v_m, xE v_m\}$ and $\{v_m<x, xE v_m\}$, respectively.
First extend $t_*$ to a coding node
$c_i\in \bU$,
 and then take extensions $s_0,s_1$ of this coding node  so that
$\{x<v_i, x E v_i\}\sse s_0$ and
$ \{v_i<x, x E v_i\}\sse s_1$.
Extend $s_0$ to a coding node
$c_j\in \bU$,
and define $c^{\bT}_m=c_j$ and $v^{\bT}_m=v_j$.
Let $u_0$ be the extension of $c^{\bT}_m$ in $\bU$
which contains $\{x<v_j, xE v_j\}$.
Extend $s_1$ to a node $t_1\in \bU\re |c^{\bT}_m|$, and let $u_1$ be the immediate successor of $t_1$ in $\bU$.
Extend  all other nodes in $\bT\re |c^{\bT}_{m-1}|$ (besides $t_*$) to a node in $\bU$ of length  $|c^{\bT}_m|$, and let $\bT\re |c^{\bT}_m|$ consist of these nodes along with $t_0$ and $t_1$.
Let $\bT\re (|c^{\bT}_m|+1)$ consist of $u_0, u_1$, and one immediate successor of each of the nodes in $\bT\re |c^{\bT}_m|$.
By the transitivity of both relations $<$ and $E$,
we obtain that the $\prec$-preserving  bijection between
 $\bU(m+1)$ and $\bT\re (|c^{\bT}_m|+1)$
preserves passing types over $\bQ_{\bQ}\re \{v^{\bT}_k:k\le m\}$.
Let $\psi(t_0\wedge t_1)=1$.

If the coding node $c_m$ has four immediate successors,
then these extensions consist  of all  choices from among $\{x<v_m,v_m<x\}$ and $\{xE v_m,x \hskip-.05in \not \hskip-.05in E v_m\}$.
We start on the outside with non-equivalence and work our way inside to equivalence.
First extend $t_*$ to a coding node $c_i\in \bU$ which has
four immediate successors, and let
$s_0$ denote the extension with
$\{x<v_i,x \hskip-.05in \not \hskip-.06in E v_i\}$
and
$s_3$ denote the extension with
$\{v_i<x,x \hskip-.05in \not \hskip-.06in E v_i\}$.
Again, extend $s_0$ to a coding node $c_j\in \bS$
 which has
four immediate successors and, abusing notation, let
$s_0$ denote the extension with
$\{x<v_j,x \hskip-.03in \not \hskip-.05in E v_j\}$
and
$s_1$ denote the extension with
$\{v_j<x,x \hskip-.03in \not \hskip-.05in E v_j\}$.
Then extend $s_1$ to any coding node $c_k$.
Take $c_\ell$ to be a coding node extending $c_k\cup\{x<v_k, x E v_k\}$,
and define $c^{\bT}_m=c_{\ell}$ and $v^{\bT}_m=v_{\ell}$.
Let $t_0$ be the  $\prec$-leftmost extension of $s_0$
in $\bU(\ell)$,
let $t_2$ be the $\prec$-leftmost extension of
$c_k\cup\{v_k<x, x E v_k\}$ in $\bU(\ell)$,
and let $t_3$ be the $\prec$-leftmost extension of $s_3$ in $\bU(\ell)$.
Finally, define $\bT\re |c^{\bT}_m|$ to consist of
$\{t_0,
c^{\bT}_m, t_2,t_3\}$ along with
the leftmost extensions in $\bU(\ell)$ of the nodes in
$(\bT\re |c^{\bT}_{m-1}|)\setminus\{t_*\}$.
Let the nodes in $\bT\re |c^{\bT}_{m}+1|$
consist of $c^{\bT}_{m}\cup\{x<v^{\bT}_m,xEv^{\bT}_m\}$, along with the immediate successors  in $\bU(\ell+1)$
of
the rest of the nodes in $\bT\re |c^{\bT}_{m}|$.
The three  new splitting nodes in $\bT$ are $t_0\wedge t_3=c_i$, $t_0\wedge t_2=t_0\wedge c^{\bT}_{m}=c_j$, and $t_2\wedge c^{\bT}_{m}=c_k$.
Define $\psi(c_i)=\psi(c_j)=1$ and $\psi(c_k)=0$.  It is straightforward to check that this satisfies the requirements of a Labeled Diagonal Coding Tree.

The idea for general $(\bQ_{\bQ})_n$ is similar.
Here we have a sequence of convex equivalence relations $\lgl E_i:i<n\rgl$, where for each $i<n-1$,
$E_{i+1}$ coarsens $E_i$.
Similarly to the above,  each coding node  $c_m$ has
$2(j+1)$ many immediate successors, for some $j\le n$.
The  immediate successors
run through all combinations of choices from among $\{x<v_m,v_m<x\}$
and $\{x  E_0 v_m\}\cup
\{(xE_{i+1} v_m\wedge$
$x \hskip-.05in \not \hskip-.04in E_{i} v_m) : i< j\}$.
When constructing skew splitting,
 in order to set up so that the desired passing types  are available  at the next coding node of $\bT$,
we start on the ``outside'' with  types containing
$(xE_{j} v_m\wedge
x \hskip-.05in \not \hskip-.04in E_{j-1} v_m)$
and work our way inward, with the increasingly finer equivalence relations,
analogously to how the case of four immediate successors was handled above for $\bQ_{\bQ}$.
The $\psi$ function takes values in $\{0,1,\dots, n\}$ and is defined as follows: 
For incomparable coding nodes $c^{\bT}_k$ and $c^{\bT}_{\ell}$ representing vertices $v_{k'}$ and $v_{\ell'}$, respectively, 
$\psi(c^{\bT}_k\wedge c^{\bT}_{\ell})=0$ if and only if $v_{k'}  E_{n-1} v_{\ell'}$;
for $1\le i< n$, $\psi(c^{\bT}_k\wedge c^{\bT}_{\ell})=n-i$ if and only if 
$v_{k'} E_{i-1} v_{\ell'}$ but 
$v_{k'} \hskip-.05in \not \hskip-.04in E_i v_{\ell'}$;
$\psi(c^{\bT}_k\wedge c^{\bT}_{\ell})=n$ if and only if $v_{k'} \hskip-.05in \not \hskip-.04in E_0 v_{\ell'}$.

The presence of any unary relations has no effect on the existence of labeled diagonal coding trees.
It is straightforward to check that the Labeled Extension Property is satisfied. 
\end{proof}

This brings us to our second collection of  big Ramsey structures.

\begin{thm}\label{thm.LOEqRels}
\begin{enumerate}
\item
The rationals, $\bQ$, satisfy \EEAP$^+$.
\item
$\bQ_n$, for each $n\ge 1$, satisfies \EEAP$^+$.
\item
$\bQ_{\bQ}$, and more generally,
$(\bQ_{\bQ})_n$ for each $n\ge 2$, satisfies LSDAP$^+$.
\item
The \Fraisse\ limit of any \Fraisse\ class
in $\mathcal{COE}_{n,p}$, for any $n,p\ge 1$, satisfies LSDAP$^+$.
\end{enumerate}
Hence
they
admit big Ramsey structures which are simply characterized.
\end{thm}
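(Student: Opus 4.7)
The plan is to assemble claims (1)--(4) directly from the preparatory results already established in Section \ref{sec.EEAPClasses}, with the concluding statement about big Ramsey structures following from the machinery developed in Section \ref{sec.brd}. Claims (1) and (2) are immediate: (1) is exactly Proposition \ref{prop.LO_n} applied to $\mathcal{LO}$, whose \Fraisse\ limit is $\bQ$, and (2) is Proposition \ref{bQn}, which gives \EEAP$^+$ for each $\bQ_n$. No additional argument is needed for these two items.

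For claim (3), I would verify separately the three ingredients constituting LSDAP$^+$ as in Definition \ref{defn_LSDAP_plus}. The SDAP ingredient for the age $\mathcal{COE}_n$ follows from Lemma \ref{lem.COESDAP}, since $\mathcal{COE}_n$ sits inside $\mathcal{LOE}_{1,n,0}$. The Labeled Diagonal Coding Tree Property together with the Labeled Extension Property are supplied by Lemma \ref{lem.Q_Qbiskew}, whose explicit construction assigns to each splitting node $s$ the value $\psi(s)=n-i$ exactly when $s$ is the meet of two coding nodes whose represented vertices are $E_{i-1}$-equivalent but not $E_i$-equivalent (with the boundary cases $\psi=0$ or $\psi=n$ handled as in that lemma). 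Claim (4) then combines the SDAP output of Lemma \ref{lem.COESDAP} with Proposition \ref{prop.loe}, which furnishes the Labeled Diagonal Coding Tree Property and the Labeled Extension Property for any class in $\mathcal{COE}_{n,p}$; the presence of finitely many dense unary predicates requires only that each unary color be selected densely in the tree construction (per Convention \ref{conv.Gamma_ts}), which does not interfere with the definition of $\psi$.

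The concluding statement that these \Fraisse\ limits admit big Ramsey structures with a simple characterization then follows at once from Theorem \ref{thm.BRS}: each structure satisfies either \EEAP$^+$ or LSDAP$^+$ in a language of arity at most two, so expanding by the binary relation $\triangleleft$ interpreted as the lexicographic order $\prec$ on the diagonal antichain $\bD$ constructed in Lemma \ref{lem.bD}, together with the quaternary meet-comparison relation $\mathscr{Q}$, produces the desired big Ramsey structure.

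The main point requiring care in the cited ingredients is the Labeled Extension Property itself in the presence of nested convex equivalence relations. Given a target splitting node $u$ in a finite subtree with $\psi(u)=m$ and any extension $s'$ of the corresponding node $s\in\max(B)^+$ with $\psi(s')\ge m$, one must locate an actual splitting node $s^*\contains s'$ in $T$ with $\psi(s^*)=m$ exactly. This reduces to the density assertion in condition (b) of Definition \ref{defn.LDCT}, which for $(\bQ_\bQ)_n$ amounts to the fact that above every finite initial segment there are cofinally many coding-node pairs whose meet witnesses any prescribed $E_{i-1}\setminus E_i$ gap, which in turn follows from the disjoint amalgamation present in $\mathcal{COE}_{n,p}$. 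I expect no genuine obstacle here, as all the necessary verifications have already been effected in Lemmas \ref{lem.COESDAP} and \ref{lem.Q_Qbiskew}; the theorem is a consolidation rather than a new argument.
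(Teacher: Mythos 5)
Your proposal is correct and matches the paper's approach: the paper's proof is precisely a one-line consolidation of Propositions~\ref{prop.LO_n}, \ref{bQn}, and \ref{prop.loe} together with Theorems~\ref{thm.onecolorpertype}, \ref{thm.bounds}, and \ref{thm.BRS}, and your unpacking of Proposition~\ref{prop.loe} into Lemmas~\ref{lem.COESDAP} and \ref{lem.Q_Qbiskew} tracks the paper's own internal structure. One small imprecision worth noting: the density needed for the Labeled Extension Property is not condition~(b) of Definition~\ref{defn.LDCT} (which only gives cofinal splitting nodes with the \emph{same} $\psi$-value); what is needed is the stronger fact that any prescribed $\psi$-value $\le\psi(s')$ is attained by some splitting node above $s'$, which is exactly what your subsequent sentence about coding-node pairs witnessing an $E_{i-1}\setminus E_i$ gap supplies.
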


\begin{proof}
 This   follows from 
Theorems \ref{thm.onecolorpertype},
\ref{thm.bounds} and \ref{thm.BRS}, and 
Propositions \ref{prop.LO_n},
\ref{bQn} and
\ref{prop.loe}.
\end{proof}

We now discuss previous results which are recovered in Theorem \ref{thm.LOEqRels}, and results which  are new.

Part (1) of Theorem \ref{thm.LOEqRels} recovers
the following previously known results:
Upper bounds for  finite big Ramsey degrees of the rationals  were  found  by Laver \cite{LavUnp}
using Milliken's theorem.
The big Ramsey degrees were characterized and computed by Devlin in \cite{DevlinThesis}.
Zucker  interpreted
Devlin's characterization into a big Ramsey structure, from which he then constructed the universal completion flow  of the rationals
 in \cite{Zucker19}.

Exact big Ramsey degrees of the structures $\bQ_n$ were characterized  and calculated by Laflamme, Nguyen Van Th\'{e}, and Sauer in \cite{Laflamme/NVT/Sauer10}, using a colored level set version Milliken Theorem which they proved specifically for their application.
The work in this paper using coding trees of $1$-types   provides a new way to view and recover  their characterization of the big Ramsey degrees.
From their work on $\bQ_2$,
Laflamme, Nguyen Van Th\'{e}, and Sauer further
calculated the big Ramsey degrees of the circular
directed graph $\mathbf{S}(2)$
in \cite{Laflamme/NVT/Sauer10}.
Exact Ramsey degrees of $\mathbf{S}(n)$
for all $n\ge 3$  were recently calculated by  Barbosa in \cite{Barbosa20}
using category theory methods.
These structures $\mathbf{S}(n)$ have ages which do not satisfy \EEAP.

Part (3) of Theorem \ref{thm.LOEqRels} answers a question
 posed
 by Zucker during the open problem session
at the 2018 BIRS Workshop on {\em Unifying Themes in Ramsey Theory}:
He asked whether $\bQ_{\bQ}$ has finite big Ramsey degrees and whether it  admits a big Ramsey structure.
At that meeting,
proofs that $\bQ_{\bQ}$
 has finite big Ramsey degrees were found
  by  \Hubicka\ using unary functions and strong trees,
by Zucker using similar methods,
 and by Dobrinen
 using an approach that involved
developing a topological Ramsey space  with strong trees as bases, where each node in the given base is replaced with a strong tree.
None  of these proofs have been   published, nor were those upper bounds shown to be exact.
Independently, Howe also proved upper bounds for big Ramsey degrees in $\bQ_{\bQ}$ \cite{HoweThesis}.
The result  in this paper via  coding trees of $1$-types and LSDAP$^+$ 
characterizes exact big Ramsey degrees and
proves  that $\bQ_{\bQ}$ admits a big Ramsey structure, and
moreover, shows how it
 fits into a  broader scheme of structures which have easily described big Ramsey degrees.

Part (4) of Theorem \ref{thm.LOEqRels}  in its full generality is new.

We now mention some  \Fraisse\ classes that do not satisfy \EEAP.
These include \Fraisse\ classes of the form Forb$(\mathcal{F})$ 
in a language with at least one binary relation symbol 
where $\mathcal{F}$ contains 
 some forbidden irreducible  substructure
which is not $3$-irreducible.
 For instance, the  ages of the $k$-clique-free Henson graphs, most metric spaces, and the generic partial order  do not satisfy \EEAP.
We present two concrete examples of \Fraisse\ classes failing \EEAP\ to give an idea of how failure  can arise.

\begin{example}[\SFAP\ fails for triangle-free graphs]\label{ex.trianglefree}
  Let $\mathcal{G}_3$ denote
  the \Fraisse\ class of finite triangle-free graphs.
  Let $\bfA$ be the graph with  two  vertices $\{a_0,a_1\}$ forming a  non-edge,
   and let
  $\bfC$ be the graph with   vertices $\{a_0,a_1,v,w\}$ with   exactly  one  edge,
   $v\, E\, w$.
 Suppose
  $\bfB$ has vertices  $\{a_0,a_1,b\}$, where $b\not\in\{v,w\}$.
  Let $\sigma=\{\neg E(x,a_0)\wedge \neg E(x,a_1) \wedge E(x,b)\}$
  and
  $\tau=\{\neg E(x,a)\wedge E(x,a_1) \wedge E(x,b)\}$.
  Then $\sigma\re\bfA=\type(v/\bfA)$,
 $\tau\re\bfA=\type(w/\bfA)$, and $\sigma\ne\tau$.

Suppose $\bfE\in\mathcal{G}_3$ is a graph
satisfying
the conclusion of  Definition \ref{defn.SFAP}.
To simplify notation, suppose that $\bfE$ has
universe
$\mathrm{E}=\{a_0,a_1,b,v,w\}$, with the obvious inclusion maps being the amalgamation maps.
Then
$\type(v/\bfB)=\sigma$,  $\type(w/\bfB)=\tau$,
and $\bfE\re\{a_0,a_1,v,w\}\cong \bfC$,
 so  each pair in
$\{b,v,w\}$ has an edge in $\bfE$.
But this implies that  $\bfE$ has a triangle, contradicting
$\bfE\in\mathcal{G}_3$ .
Therefore, \SFAP\ fails for $\mathcal{G}_3$.
\end{example}

The failure of  \EEAP\ for partial orders can be proved similarly, by taking $\bfC$ to have two vertices not in $\bfA$ which are unrelated to each other, and constructing $\bfB$, $\sigma$, $\tau$ so that any extension $\bfE$ satisfying $\sigma$ and $\tau$ induces a relation between any $v'$, $w'$ satisfying $\sigma$, $\tau$ respectively,
in such a way
that transitivity forces there to be a relation between $v'$ and $w'$.

  We now give an example where \SFAP\ fails in a structure
  with  a relation of arity higher than two.

\begin{example}[\SFAP\ fails for
$3$-hypergraphs forbidding
the irreducible 3-hypergraph
on four vertices with three hyper-edges]\label{ex.hyperI}
Suppose our language has one ternary relation symbol $R$.
Let
$\mathbf{I}$  denote a ``pyramid'', the structure on four vertices with exactly three hyper-edges;
that is, say $\mathrm{I}=\{i,j,k,\ell\}$ and
$\mathbf{I}$ consists of the relation
$
\{R^\mathbf{I}(i,j,k), R^\mathbf{I}(i,j,\ell),R^\mathbf{I}(i,k,\ell)\}.
$
Then every two vertices in $\mathrm{I}$ are in some relation in $\mathbf{I}$, so $\mathbf{I}$ is irreducible.
However, the triple $\{j,k,\ell\}$ is not contained in any relation in $\mathbf{I}$.

The free amalgamation class Forb$(\{\mathbf{I}\})$ does not satisfy
\SFAP:
Let $\bfA$ be the singleton $\{a\}$,
with $R^\bfA = \emptyset$,
and let $\bfC$ have universe $\{a,c_0,c_1\}$ with
$R^\bfC = \{(a,c_0,c_1)\}$.
Let $\bfB$ have universe $\{a,b\}$, and let $\sigma$ and $\tau$ both be the $1$-types $\{R(x,a,b)\}$ over $\bfB$.
Suppose that $\bfE\in$ Forb$(\{\mathbf{I}\})$
satisfies the conclusion of
Definition \ref{defn.EEAP_new}.
Then $\bfE$ has universe $\{a,b,c_0,c_1\}$ and
 $R^\bfE = \{(a,c_0,c_1), (c_0,a,b), (c_1,a,b)\}$.
 Hence $\bfE$ contains a copy of $\mathbf{I}$, contradicting that $\bfE\in$ Forb$(\{\mathbf{I}\})$.
\end{example}

\begin{rem}
The same argument shows  that
\SFAP\ fails for
any  free amalgamation class Forb$(\mathcal{F})$
 where some $\mathbf{F}\in\mathcal{F}$
 is not 3-irreducible.
\end{rem}


We conclude our two papers by presenting  a catalogue of many (though not all) of the known results regarding indivisibility,
 finite  big Ramsey degrees  (upper bounds), and characterizations  of exact big Ramsey degrees
(canonical partitions).
A blank box means the property  has not yet been proved or disproved.
All previously known results for  \Fraisse\ classes
in languages with relations of arity at most two
with \Fraisse\ limits satisfying \EEAP$^+$ or LSDAP$^+$ are recovered by 
Theorem \ref{thm.main} in this paper.
Results which are new to our work in Parts I and II 
are indicated by the number of the theorem from which they follow.

In all cases where exact big Ramsey degrees have been characterized, this has been achieved via finding canonical partitions.
Moreover,
for structures in languages with relations of arity at most two,
these canonical partitions have been found in terms of similarity types of antichains
in
trees of $1$-types, either explicitly or implicitly.
Once one has such canonical partitions, the existence of a big Ramsey structure follows from
Theorem \ref{thm.apply}
in conjunction with Zucker's Theorem 7.1 in \cite{Zucker19}.
Thus, we do not include a column for existence of big Ramsey structures.

\vspace{2mm}

\textbf{Key}
\vspace{2mm}

    \begin{tabular}{|l|}
    \hline
     $\bullet$
     DA: Disjoint Amalgamation\\
     $\bullet$ FA: Free Amalgamation\\
     $\bullet$ SDAP:
     strongest of \SFAP, \EEAP, \EEAP$^+$/LSDAP$^+$ known to hold\\
$\bullet$ IND: Indivisibility\\
     $\bullet$ FBRD: Finite big Ramsey degrees\\
     $\bullet$ CP: Exact big Ramsey degrees characterized via Canonical Partitions\\ \\
   \cmark \ Yes \qquad \xmark \ No \quad $\bigstar$ \ In some cases, not in all \\
    \hline
    \end{tabular}

\ \ \vskip.1in


\begin{enumerate}[leftmargin=-.01cm]
    \item $\mathbb{Q}$-like structures
    \begin{center}
    \resizebox{12cm}{!}{
        \begin{tabular}{|p{3.45cm}|c|c|c|c|c|c|c|}
        \hline
        {\bf \Fraisse\ limit} & {\bf DA} & {\bf FA}  & {\bf SDAP}&
\bf{IND} &\bf{FBRD} & \bf{CP}\\
        \hline
        $\mathbb{Q}$ with no relations & \cmark & \cmark  &
         SDAP
& Pigeonhole
& \cite{Ramsey30} & \cite{Ramsey30}\\
        \hline
        $(\mathbb{Q},<)$ & \cmark & \xmark  & SDAP$^+$
& Folklore
& \cite{LavUnp} & \cite{DevlinThesis}\\
        \hline
        $\mathbb{Q}_n$ & \cmark & \xmark & SDAP$^+$
&  Folklore
& \cite{Laflamme/NVT/Sauer10} & \cite{Laflamme/NVT/Sauer10}\\
        \hline
        $\mathbf{S}(2)$& \cmark & \xmark & \xmark
&  \xmark
& \cite{Laflamme/NVT/Sauer10}& \cite{Laflamme/NVT/Sauer10}\\
        \hline
        $\mathbf{S}(3)$,$\mathbf{S}(4),\cdots$& \cmark & \xmark & \xmark
& \xmark
& \cite{Barbosa20} & \cite{Barbosa20}\\
        \hline
        $\mathbb{Q}_{\mathbb{Q}}$, $\mathbb{Q}_{\mathbb{Q}_{\mathbb{Q}}}, \cdots$ & \cmark & \xmark  & LSDAP$^+$
& [Thm 5.12 of \cite{CDPII}]
& [Thm 5.12 of \cite{CDPII}]
& [Thm 5.12 of \cite{CDPII}]\\

        \hline
 \Fraisse\ limit of $\mathcal{COE}_{n,p}$
 & \cmark & \xmark  & LSDAP$^+$
 & [Thm 5.12 of \cite{CDPII}]
& [Thm 5.12 of  \cite{CDPII}]
& [Thm 5.12 of \cite{CDPII}]\\
        \hline

       Main reducts of $(\bQ,<)$ & \cmark & \xmark &  SDAP$^+$
& Folklore
& \cite{Masulovic18} &  \\
      \hline
        Generic structures with two or more independent  linear relations & \cmark & \xmark & \EEAP\
& Folklore
&\cite{Hubicka_CS20} & \\
        \hline
        \end{tabular}}
    \end{center}
\vfill\eject

    \item Unconstrained  relational structures and their ordered expansions
        \begin{center}
        \resizebox{12cm}{!}{
        \begin{tabular}{|p{3.6cm}|c|c|c|c|c|c|c|}
                \hline
        {\bf \Fraisse\ limit} & {\bf DA} & {\bf FA} & {\bf SDAP}
&{\bf IND} & {\bf FBRD} & {\bf CP} \\
        \hline
            Rado graph & \cmark & \cmark  & SFAP
& Folklore
& \cite{Sauer06} & \cite{Sauer06}\\

            \hline
       Generic directed graph & \cmark & \cmark & SFAP
&   \cite{El-Zahar/Sauer93}
 & \cite{Laflamme/Sauer/Vuksanovic06} & \cite{Laflamme/Sauer/Vuksanovic06}\\
             \hline
             Generic tournament & \cmark & \xmark  & SDAP$^+$
&   \cite{El-Zahar/Sauer93}
& \cite{Laflamme/Sauer/Vuksanovic06} & \cite{Laflamme/Sauer/Vuksanovic06}\\
             \hline
             Generic unrestricted structures in a finite binary relational language
             & \cmark & 
	$\bigstar$                        
             & SDAP$^+$
&  \cite{Laflamme/Sauer/Vuksanovic06}
 & \cite{Laflamme/Sauer/Vuksanovic06} & \cite{Laflamme/Sauer/Vuksanovic06} \\
             \hline
Ordered expansions of any of the above structures
             & \cmark & \xmark  & SDAP$^+$
& [Thm 1.2 of \cite{CDPI}]
& [Thm 3.10 of \cite{CDPII}]
& [Thm 4.8 of \cite{CDPII}]\\

 \hline

            Generic $3$-uniform hypergraph & \cmark & \cmark & SFAP
&  \cite{El-Zahar/Sauer94}
& \cite{Hubicka_et4_19withproofs} &  \\
            \hline
            Generic $k$-uniform hypergraph for $k > 3$ & \cmark & \cmark  & SFAP
&  \cite{El-Zahar/Sauer94}
&  \cite{Hubicka_et4_20} &
           \\
             \hline

Generic unrestricted structures  with relations in any arity, and their ordered expansions
             & \cmark & 
$\bigstar$                 
             & SDAP$^+$
& [Thm 1.2 of \cite{CDPII}]
& &  \\
             \hline
        \end{tabular}}
    \end{center}
    \vspace{2mm}


    \item Constrained  structures with relations of arity at most two
        \begin{center}
        \resizebox{12cm}{!}{
        \begin{tabular}{|p{4.2cm}|c|c|c|c|c|c|c|}
                \hline
        {\bf \Fraisse\ limit} & {\bf DA} & {\bf FA} & {\bf SDAP} &  {\bf IND} &{\bf FBRD} & \bf{ CP} \\
        \hline
      Generic bipartite & \cmark & \cmark  & SFAP
&  Folklore
& \cite{HoweThesis} & [Thm 4.8 of \cite{CDPII}]\\
            \hline
             Generic $n$-partite for $n\geq 3$ & \cmark & \cmark & SFAP
&  Folklore
& \cite{Zucker20} & [Thm 4.8 of \cite{CDPII}]\\
            \hline
        Generic $K_3$-free graphs & \cmark & \cmark & \xmark
& \cite{Komjath/Rodl86}
& \cite{DobrinenJML20} & \cite{Balko7}\\
        \hline
        Generic $K_n$-free graphs for finite $n>3$ & \cmark & \cmark & \xmark
& \cite{El-Zahar/Sauer89}
& \cite{DobrinenH_k19} & \cite{Balko7}\\
        \hline
 \Fraisse\ limits with free amalgamation that are ``rank linear''
  & \cmark & \cmark & 
  $\bigstar$
& \cite{Sauer03}
 & \cite{Zucker20} & \cite{Balko7}\\
        \hline
        
\Fraisse\ limit of Forb$(\mathcal{F})$,
where
$\mathcal{F}$ is a finite  set of finite irreducible
structures

& \cmark & \cmark & 
$\bigstar$
& 
$\bigstar$
 & \cite{Zucker20} & \cite{Balko7}\\
        \hline
         Generic poset & \cmark & \xmark & \xmark
&  Folklore
& \cite{Hubicka_CS20} &  \\
        \hline
        \end{tabular}}
    \end{center}
    \vspace{2mm}

\vfill\eject

   \item Constrained 
arbitrary  
   arity relational structures
   \vspace{2mm}
      \begin{center}
        \resizebox{12cm}{!}{
        \begin{tabular}{|p{4.3cm}|c|c|c|c|c|c|c|}
                \hline
        {\bf \Fraisse\ limit} & {\bf DA} & {\bf FA} & {\bf SDAP} & {\bf IND} & \bf{FBRD} & \bf{CP} \\
       \hline
        Generic $k$-hypergraph omitting a finite set of  finite $3$-irreducible  $k$-hypergraphs
for $k \ge 3$
         & \cmark & \cmark & 
SFAP                 
& \cite{El-Zahar/Sauer94}  & &  \\
        \hline
       \Fraisse\ limit of        $\Forb(\mathcal{F})$    where   all $F \in \mathcal{F}$ are irreducible and $3$-irreducible
      & \cmark & \cmark & SFAP
& 
 [Thm 1.2 of \cite{CDPI}]
& & \\
             \hline
            \Fraisse\ limit of
            $\Forb(\mathcal{F})^<$,
   where all $F \in \mathcal{F}$ are
   irreducible and
   $3$-irreducible
   & \cmark & \xmark & SDAP$^+$
& [Thm 1.2 of \cite{CDPI}]
& &\\
             \hline


        \end{tabular}}
    \end{center}
\end{enumerate}
\ \vskip.1in


\begin{rem}
Results on indivisibility and big Ramsey degrees of metric spaces appear in
\cite{DLPS07},
\cite{DLPS08},
 \cite{Hubicka_CS20},
\cite{Masulovic18}, \cite{Masulovic_RBS20},
\cite{NVTThesis},
\cite{NVT08}, \cite{NVTMem10}, and \cite{NVTSauer09}.
In his PhD thesis \cite{NVTThesis},
Nguyen Van Th\'{e} proved
results on indivisibility of Urysohn spaces
which were later published in \cite{NVTMem10}, including
that all Urysohn spaces with distance set $S$ of size four are indivisible (except for $S=\{1,2,3,4\}$).
A characterization of those
countable ultrametric spaces which are homogeneous and indivisible
was proved by
Delhomm\'{e}, Laflamme, Pouzet, and Sauer in
 \cite{DLPS08}.
Nguyen Van Th\'{e} showed finite big Ramsey degrees for finite $S$-submetric spaces of ultrametric $S$-spaces
in
\cite{NVT08}, with $S$ finite and nonnegative.
In
\cite{NVTSauer09},
Nguyen Van Th\'{e} and Sauer   proved that for each integer $m\ge 1$, the countable homogenous metric space with distances in $\{1,\dots, m\}$ is indivisible.
Sauer established indivisibility
 of Urysohn $S$-metric spaces with S finite  in \cite{Sauer12}.
Ma\v{s}ulovi\'{c}
proved finite big Ramsey degrees for
 Urysohn $S$-metric spaces,
 where $S$ is a finite distance set with no internal jumps and a property called ``compactness'' in that paper, meaning that the distances are not too far apart.
Recently, Hubi\v{c}ka  extended this to all  Urysohn $S$-metric spaces where
$S$ is tight in addition to finite and nonnegative \cite{Hubicka_CS20}.
As \EEAP\ fails for non-trivial metric spaces (for the same reason it fails for the triangle-free graphs and partial orders),  we mention no details here.
\end{rem}

\section{Concluding remarks and open problems}\label{sec.infdiml}

In  
Section \ref{sec.EEAPClasses},
we gave examples of \Fraisse\ classes with \Fraisse\ limits satisfying \EEAP$^+$ or LSDAP$^+$.

\begin{question}
Which other \Fraisse\ classes either satisfy \SFAP, or more generally, have \Fraisse\ limits satisfying \EEAP$^+$ or LSDAP$^+$?
\end{question}

\Fraisse\ structures consisting of finitely many independent linear orders
present an interesting case as 
they do not 
Diagonal Coding Property, 
but their
ages do have \EEAP\ 
and their coding trees have bounded branching.
This motivates the formulation of the following properties:
For $k\ge 2$,  we say that
the \Fraisse\ limit $\bK$ of
a \Fraisse\ class $\mathcal{K}$ satisfies
{\em $k$-\EEAP$^+$}
if
$\bK$
satisfies \EEAP; there is a perfect  subtree $\bT$ of the coding tree of $1$-types $\bU$
for $\bK$
such that $\bT$ represents a copy of $\bK$ and $\bT$ has splitting nodes with degree $\le k$; and the appropriately formulated Extension Property holds.
Note that here, 
$\bT$ is allowed to have more than one splitting node on any given level.
 We let
 \BEEAP$^+$
 stand for
 {\em Bounded \EEAP$^+$},
meaning that there is a $k \ge 2$ such that
$k$-\EEAP$^+$
holds.
This brings us to the following implications.

\begin{fact}\label{SFAPEEAP}
\SFAP\ $\Lra$ \EEAP$^+$ $\Lra$ \TwoEEAP$^+$
$\Lra$ \BEEAP$^+$ $\Lra$ \EEAP.
\end{fact}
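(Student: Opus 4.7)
The proof is essentially a matter of unpacking the relevant definitions and invoking the appropriate result from Part I; no substantive new argument is required, and I expect no real obstacles.

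For the first implication, \SFAP\ $\Rightarrow$ \EEAP$^+$, I would simply cite Theorem 4.20 of \cite{CDPI}, which states precisely that the \Fraisse\ limit of any \Fraisse\ class satisfying \SFAP\ has \EEAP$^+$ (this was already invoked in Proposition \ref{prop.FA} of the present paper). The argument there constructs a diagonal coding subtree of $\bS(\bK)$ with binary splitting using the free amalgamation furnished by \SFAP, and verifies the Extension Property directly from \SFAP.

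For the second implication, \EEAP$^+$ $\Rightarrow$ \TwoEEAP$^+$, the point is that the notion of diagonal coding subtree (Definition \ref{defn.sct}, via Definition \ref{def.diagskew}) already requires every splitting node to have degree exactly two. Hence the perfect diagonal coding subtree $\bT$ furnished by the Diagonal Coding Tree Property of \EEAP$^+$ is automatically a perfect subtree of $\bU(\bK)$ representing a copy of $\bK$ with splitting nodes of degree at most $2$. Moreover, the Extension Property (Definition \ref{defn.ExtProp}) asserted by \EEAP$^+$ is exactly the Extension Property required by \TwoEEAP$^+$. So \EEAP$^+$ already witnesses \TwoEEAP$^+$ with $k=2$; the difference lies only in the fact that \TwoEEAP$^+$ allows more than one splitting node per level, which is a strictly weaker requirement.

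For the third implication, \TwoEEAP$^+$ $\Rightarrow$ \BEEAP$^+$, I would just observe that \BEEAP$^+$ asserts the existence of some $k\ge 2$ for which $k$-\EEAP$^+$ holds, so $k=2$ witnesses this. For the fourth implication, \BEEAP$^+$ $\Rightarrow$ \EEAP, the definition of $k$-\EEAP$^+$ explicitly incorporates \EEAP\ as one of its clauses, so \BEEAP$^+$ immediately yields \EEAP. The entire fact therefore reduces to carefully noting how each successive notion weakens the previous one, with the only nontrivial content already contained in Theorem 4.20 of Part I.
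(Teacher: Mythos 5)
Your proof is correct and takes essentially the same approach as the paper: the first implication is exactly Theorem 4.20 of Part I, and the remaining three implications are, as you observe, immediate from the definitions (a perfect diagonal coding subtree has splitting degree exactly two and at most one splitting node per level, hence witnesses $2$-\EEAP$^+$; $k=2$ witnesses \BEEAP$^+$; and \EEAP\ is a clause of $k$-\EEAP$^+$). The paper states these last three steps more tersely as simply following "by definition," but the content is identical to what you wrote.
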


Theorem 4.20 in Part I, \cite{CDPI},  showed
that
\SFAP\  implies  \EEAP$^+$.
By definition, \EEAP$^+$\ implies
\TwoEEAP$^+$,
which in turn implies
\BEEAP$^+$.
 Each of these
 properties implies
 \EEAP, again by definition.
 The example of finitely many independent linear orders shows that
 \BEEAP$^+$ does not imply \EEAP$^+$.
On the other hand,
all examples considered in this paper satisfying \EEAP\ also satisfy
\BEEAP$^+$.
 It could well be the case   that \EEAP\   is equivalent to
  \BEEAP$^+$.
 The methods in this paper can be adjusted  to handle structures with
  \BEEAP$^+$,
 so the following question becomes interesting.

\begin{question}
Are \EEAP, \BEEAP$^+$, and \TwoEEAP$^+$ equivalent?
In other words, does  \EEAP\ imply \BEEAP$^+$, and does \BEEAP$^+$ imply \TwoEEAP$^+$?
\end{question}

Throughout this paper, we have mentioned  known results
regarding   finite big Ramsey degrees.
Actual calculations of  big Ramsey degrees, however, are still sparse, and have only been found
for the rationals by Devlin in \cite{DevlinThesis},
the Rado graph by Larson
 in
\cite{Larson08},
 the structures $\bQ_n$  and $\mathbf{S}(2)$ by Laflamme, Nguyen Van Th\'{e}, and Sauer in
 \cite{Laflamme/NVT/Sauer10},
 and the rest of the circular digraphs $\mathbf{S}(n)$, $n\ge 3$, by Barbosa in
\cite{Barbosa20}.
The canonical partitions in Theorem \ref{thm.bounds}
 provide a template for calculating the big Ramsey degrees for all \Fraisse\ structures satisfying
 \EEAP$^+$ or LSDAP$^+$.

\begin{problem}
Calculate the big Ramsey degrees $T(\bfA,\bK)$,
$\bfA\in\mathcal{K}$,
 for each \Fraisse\ class
 $\mathcal{K}$
with relations of arity at most two and
 with a \Fraisse\ limit
 satisfying
 \EEAP$^+$ or LSDAP$^+$.
\end{problem}

Lastly,
it is our hope that using combs in trees of $1$-types might lead to smaller bounds  for the ordered  Ramsey property.

\begin{problem}\label{prob.combbetterbound}
Suppose $\mathcal{K}$
is a \Fraisse\ class
with relations of arity at most two and
with \Fraisse\ limit satisfying
\EEAP$^+$.
Use Theorem \ref{thm.SESAPimpliesORP} to  find  better bounds for the smallest size of a structure $\bfC\in\mathcal{K}^{<}$
such that
\begin{equation}
\bfC\ra (\bfB)^{\bfA}
\end{equation}
for any given $\bfA\le\bfB$ inside $\mathcal{K}^{<}$.
\end{problem}

\newpage

\bibliography{references}
\bibliographystyle{ijmart}

\end{document}